\newtheorem*{theo*}{Theorem}
\newtheorem{defi}{Definition}[section]
\newtheorem{rem}[defi]{Remark}
\newtheorem{lem}[defi]{Lemma}
\newtheorem{theo}[defi]{Theorem}
\newtheorem{cor}[defi]{Corollary}
\newtheorem{pro}[defi]{Proposition}
\newtheorem{exa}[defi]{Example}
\newtheorem{conj}[defi]{Conjecture}
\newtheoremstyle{remarques}{\medskipamount}{\medskipamount}{}
                        {0pt}{\bfseries}{.}{ }{}
\theoremstyle{remarques}
\newcommand{\cercle}[1]{\textcircled{\raisebox{-0.8pt}{\small#1}}}
\DeclareMathOperator{\Espe}{\bf E}
\renewcommand{\geq}{\geqslant}
\DeclareMathOperator{\indic}{{\bf 1}}
\renewcommand{\le}{\leqslant}
\renewcommand{\leq}{\leqslant}
\DeclareMathOperator{\pref}{cont}
\DeclareMathOperator{\Proba}{\bf P}
\newcommand{\tooSous}[1]{\smash{\mathop{\longrightarrow}\limits _{#1}}}
\newcounter{compteurExo}[section]
\DeclareMathOperator{\casc}{casc}
\newcommand{\set}[1]{\left\{ #1\right\}}
\newcommand{\g}[1]{\mathbb #1}
\newcommand{\rond}[1]{{\mathscr #1}}
\newcommand{\Fdebut}{\color{brown}}
\newcommand{\Ffin}{\color{black}}
\newcommand{\Ndebut}{\color{violet}}
\newcommand{\Nfin}{\color{black}}
\newcommand\1{\leavevmode\hbox{\rm \small1\kern-0.35em\normalsize1}}
\newcommand\ind[1]{\1_{\{#1\}}}
\begin{document}

 \title{Variable Length Memory Chains:\\characterization of stationary probability measures}
 \author{Peggy C\'enac\thanks{Universit\'e de Bourgogne,
Institut de Math\'ematiques de Bourgogne,
IMB UMR 5584 CNRS,
9 rue Alain Savary - BP 47870, 21078 DIJON CEDEX, France.
}, Brigitte Chauvin\thanks{Universit\'e Paris-Saclay, UVSQ, CNRS UMR 8100, Laboratoire de Math\'ematiques de Versailles,
78000 Versailles, France
}, Camille No\^us\thanks{Cogitamus Laboratory},
Fr\'ed\'eric Paccaut\thanks{Laboratoire Ami\'enois de Math\'ematique Fondamentale et Appliqu\'ee,
CNRS UMR 7352,
Universit\'e  de Picardie Jules Verne,
33 rue Saint-Leu, 80039 Amiens, France.
} \ 
and Nicolas Pouyanne\footnotemark[2]}
\maketitle
\begin{abstract}
Variable Length Memory Chains (VLMC), which are generalizations of finite order Markov chains, turn out to be an essential tool to modelize random sequences in many domains, as well as an interesting object in contemporary probability theory.
The question of the existence of stationary probability measures leads us to introduce a key combinatorial structure for words produced by a VLMC:
the \emph{Longest Internal Suffix}.
This notion allows us to state a necessary and sufficient condition for a general VLMC to admit a unique invariant probability measure.

This condition turns out to get a much simpler form for a subclass of VLMC: the \emph{stable} VLMC.
This natural subclass, unlike the general case, enjoys a renewal property.
Namely, a stable VLMC induces a semi-Markov chain on an at most countable state space.
Unfortunately, this discrete time renewal process does not contain the whole information of the VLMC, preventing the study of a stable VLMC to be reduced to the study of its induced semi-Markov chain.
For a subclass of stable VLMC, the convergence in distribution of a VLMC towards its stationary probability measure is established.

Finally, finite state space semi-Markov chains turn out to be very special stable VLMC, shedding some new light on their limit distributions.
\end{abstract}

\vskip 10pt
{\bf MSC 2010}: 60J05, 60C05, 60G10.

\vskip 5pt
{\bf Keywords}: Variable Length Memory Chains, stationary probability measure, Longest Internal Suffix, stable context trees, Semi-Markov Chains.
\tableofcontents

\section{Introduction}
\label{sec:intro}

In a Variable Length Memory Chain (VLMC), unlike fixed order Markov chains, the probability to predict the next symbol depends on a possibly unbounded part of the past, the length of which depends on the past itself. 
These relevant parts of pasts are called \emph{contexts}. 
They are stored in a \emph{context tree}. 
With each context is associated a probability distribution prescribing the conditional probability of the next symbol, given this context. 

\vskip 5pt
In this paper we obtain some necessary and sufficient conditions to ensure existence and uniqueness of a stationary probability measure for a general VLMC.

\vskip 5pt
Pending a complete presentation in Section~\ref{sec:def}, let us now introduce a few objects, notably the combinatorial notion of alpha-LIS (LIS for \emph{Longest Internal Suffix}), on which our main result is based.
Let $\rond A$ be a finite set, called the alphabet.
A so-called context tree is a saturated tree $\rond T$ on this alphabet, \emph{i.e.} a tree such that each node has $0$ or $\#\rond A$ children.
The leaves and the infinite branches of $\rond T$ are called contexts.
The set of contexts, supposed to be at most countable, is denoted by $\rond C$.

\vskip 5pt
To each context $c\in\rond C$ is attached a probability distribution $q_{c}$ on $\rond A$.
Endowed with this probabilistic structure, such a tree is named a probabilised context tree.
Let $\rond R$ be the set of right-infinite words on the alphabet $\rond A$.
The related VLMC is defined as the $\rond R$-valued Markov chain $\left( U_n\right) _{n\geq 0}$ whose transitions are given by
\begin{equation}
\forall n\geq 0,~\forall\alpha\in\rond A,~\Proba\left( U_{n+1}=\alpha U_n | U_n\right)
=q_{\pref\left(U_n\right)}\left(\alpha\right),
\end{equation}
where $\pref(u)\in\rond C$ is defined as the only prefix of the right-infinite word $u$  appearing as a context.
See Figure \ref{fig:ex0} for an example of context tree.

\vskip 5pt
If $\pi$ is a probability measure on $\rond R$, asking $\pi$ to be stationary for such a Markov chain $\left( U_n\right) _n$ amounts to saying that, for any finite word $w$ which writes $w=\alpha v$ where $\alpha\in\rond A$ and where $v$ is a non-internal finite word of the context tree,
\begin{equation}
\label{formule:casc0}
\pi\left(w\rond R\right) =q_{\pref (v)}\left(\alpha\right)\pi\left( v\rond R\right).
\end{equation}
In this equality, $w\rond R$ denotes the set of all right-infinite words that begin by $w$.
This formula applies again for $\pi\left( v\rond R\right)$, and so on, and so forth, until... it is not possible anymore, which means that the suffix of ${w}$ is of the form $\alpha s$ where $\alpha\in\rond A$ and $s$ is an internal word of the context tree. This leads to pointing out the following unique decomposition of any finite word $w$:
\[
w=\beta _1\beta _2\dots\beta _{p_w}\alpha _ws_w,
\]
where
 
$\bullet$ $p_w$ is a nonnegative integer and $\beta_i\in \rond A$, for all $i = 1, \dots , p_w$, 

$\bullet$ $s_w$ is the longest internal strict suffix of $w$,

$\bullet$ $\alpha _w\in\rond A$.

\vskip 5pt
In this decomposition, $s_w$ is called the \emph{LIS} of $w$ and $\alpha_w s_w$ the \emph{alpha-LIS} of $w$. 
Consequently, for any stationary measure $\pi$ and for any finite non-empty word $w$, write $ w = v\alpha_w s_w$ where $v$ is a finite word and $\alpha_w s_w$ is the alpha-LIS of $ w$ so that iterating Formula \eqref{formule:casc0} gives
\begin{equation}
\label{formule:casc1}
\pi\left(w\rond R\right)  = \casc (w) \pi\left(\alpha_w s_w\rond R\right),
\end{equation}
where $\casc (w)$, 
the \emph{cascade} of $w$, is defined as
\[
\casc (w)=\prod _{1\leq k\leq p_w}q_{\pref\left(\beta _{k+1}\dots\beta _{p_w}\alpha _ws_w\right)}(\beta _k).
\]

Elementary arguments on measures show thus that any stationary probability measure on $\rond R$ is determined by its value on the cylinders based on alpha-LIS of \emph{contexts}.
Denote by $\rond S$ the set of alpha-LIS of finite contexts. This set is at most countable.
Using Formulas \eqref{formule:casc0} and \eqref{formule:casc1}, as developed in the proof of Theorem~\ref{fQbij}, it turns out that, whenever $\pi$ is stationary, all the $\pi\left( \alpha s \rond R\right)$, for $\alpha s\in\rond S$ are related by the linear system
\[
\pi\left(\alpha s \rond R\right) = \sum_{\beta t\in\rond S}\pi\left(\beta t\rond R\right) Q_{\beta t, \alpha s},
\]
where the square matrix $Q=\left( Q_{\alpha s,\beta t}\right) _{(\alpha s,\beta t)\in\rond S^2}$ is defined by
\[
Q_{\alpha s,\beta t}
=\sum _{\substack{c\in\rond C^f\\[2pt]c=t\cdots\\[1pt]c=\cdots [\alpha s]}}
\casc\left(\beta c\right).
\]
In this formula, $\rond C^f$ denotes the set of finite contexts, the notation $c = \cdots [\alpha s]$ means that $\alpha s$ is the alpha-LIS of $c$, while $c = t\cdots$ means that $t$ is a prefix of $c$.
In otherwords, $\left( \pi\left(\alpha s \rond R\right) \right)_{\alpha s\in\rond S}$ is a left-fixed vector of the matrix $Q$. 
The study of the matrix $Q$ indexed by the alpha-LIS of contexts is a key tool to characterize a stationary measure for the VLMC.
Our main result, namely Theorem~\ref{fQbij}, has the following weaker version that can be now stated.
\begin{theo*}
Let $(\rond T,q)$ be a probabilised context tree and $U$ the associated VLMC. Assume that  $\forall\alpha\in\rond A$, $\forall c\in\rond C$, $q_c(\alpha)\not=0$.
Then $U$ admits a unique stationary probability measure if and only if the three following points are satisfied:

\begin{itemize}
\item[(i)]
$\forall \alpha s\in\rond S$,  the cascade series $\displaystyle\sum _{c\in\rond C^f,~c=\cdots [\alpha s]}\casc (c)$ converge. The sum is denoted by $\kappa _{\alpha s}$.
\item[(ii)]
The matrix $Q$ admits a unique line of left-fixed vectors.
\item[(iii)]
For any left-fixed vector $\left( v_{\alpha s}\right)_{\alpha s\in\rond S}$ of $Q$,  $\displaystyle\sum _{\alpha s\in\rond S}v_{\alpha s}\kappa _{\alpha s}<+\infty$.
\end{itemize}
\end{theo*}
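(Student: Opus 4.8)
The plan is to set up a bijection between the set of stationary probability measures of $U$ and a certain set of vectors associated with the matrix $Q$, and then translate the existence-and-uniqueness statement into conditions (i)--(iii). First I would fix notation: for a probability measure $\pi$ on $\rond R$ stationary for $U$, associate the vector $v^\pi = \left(\pi(\alpha s\,\rond R)\right)_{\alpha s\in\rond S}$. As recalled in the excerpt, iterating Formula~\eqref{formule:casc0} via \eqref{formule:casc1} shows that $\pi$ is entirely determined on all cylinders $w\rond R$ by the numbers $\pi(\alpha s\,\rond R)$, $\alpha s\in\rond S$ --- indeed $\pi(w\rond R) = \casc(w)\,\pi(\alpha_w s_w\,\rond R)$ and every alpha-LIS of a context lies in $\rond S$, so the map $\pi\mapsto v^\pi$ is injective. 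Conversely, I would show that a nonnegative vector $v = (v_{\alpha s})$ gives rise (by the cascade formulas, using $v_{\alpha s}$ in place of $\pi(\alpha s\,\rond R)$) to a finitely additive set function on cylinders, which extends to a genuine probability measure on $\rond R$ precisely when (a) the defining sums converge, so that the assignments are well-defined and finite, and (b) the total mass is $1$, i.e. $\sum_{\alpha\in\rond A}\pi(\alpha\rond R)=1$, which after unfolding the cascades is exactly a normalization of the form $\sum_{\alpha s\in\rond S} v_{\alpha s}\kappa_{\alpha s} = 1$ (up to the constant coming from how $\rond R$ decomposes into cylinders over contexts). The compatibility of the finitely additive function across the Kolmogorov consistency relations is guaranteed by Formula~\eqref{formule:casc0} being imposed by construction.

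Next I would identify which vectors $v$ arise this way from a \emph{stationary} $\pi$: stationarity forces, via Formulas~\eqref{formule:casc0} and~\eqref{formule:casc1}, the linear relation $\pi(\alpha s\,\rond R) = \sum_{\beta t\in\rond S}\pi(\beta t\,\rond R)\,Q_{\beta t,\alpha s}$ displayed just before the theorem; conversely, if $v$ is left-fixed by $Q$ and the cascade sums defining the cylinder values converge, the resulting measure is stationary because \eqref{formule:casc0} is recovered on every relevant cylinder. So the set of stationary probability measures is in bijection with the set of nonnegative left-fixed vectors $v$ of $Q$ satisfying the convergence condition needed to define the measure and the normalization $\sum v_{\alpha s}\kappa_{\alpha s}<+\infty$ (then rescale to make the total mass one). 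The convergence needed to even write down $\casc$-sums for arbitrary cylinders reduces, after summing a geometric-type series of cascades grouped by their alpha-LIS, to the convergence of $\sum_{c\in\rond C^f,\,c=\cdots[\alpha s]}\casc(c) = \kappa_{\alpha s}$ for each $\alpha s$, which is condition (i); note this is where the hypothesis $q_c(\alpha)\neq 0$ is used, to ensure all terms are strictly positive so that convergence is unconditional and the induced measure gives positive mass to the relevant cylinders.

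With the bijection in hand the equivalence is bookkeeping. Existence of \emph{at least one} stationary probability measure requires: the cascade series converge (condition (i)) so that some measure can be built; $Q$ has a nonzero nonnegative left-fixed vector; and such a vector can be normalized, i.e. $\sum v_{\alpha s}\kappa_{\alpha s}<+\infty$ (condition (iii)). \emph{Uniqueness} then additionally requires that the space of left-fixed vectors be one-dimensional (condition (ii)): if it had dimension $\geq 2$ one produces two non-proportional nonnegative left-fixed vectors --- using positivity of $Q$'s entries, which follows from $q_c(\alpha)\neq 0$, any left-fixed vector's support behaves well enough to extract two linearly independent nonnegative ones --- each normalizable under (iii) hence two distinct stationary measures; and conversely a one-dimensional fixed space with (i) and (iii) gives exactly one. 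Assembling these, $U$ has a unique stationary probability measure iff (i), (ii) and (iii) all hold. The main obstacle I anticipate is the careful verification that the finitely additive set function built from a left-fixed vector via the cascade formulas is genuinely \emph{consistent} on all cylinders (so that Carathéodory/Kolmogorov extension applies), and that conversely every stationary $\pi$ satisfies the $Q$-relation exactly --- i.e. that no information is lost in passing between $\pi$ and $v^\pi$; this is precisely the content deferred to the proof of Theorem~\ref{fQbij}, and it hinges on handling the countable (possibly infinite) sums of cascades with care, which is exactly where conditions (i) and (iii) enter. A secondary subtlety is extracting, in the non-uniqueness direction, two \emph{normalizable nonnegative} left-fixed vectors from a $\geq 2$-dimensional fixed space, for which the strict positivity of the entries of $Q$ (equivalently $q_c(\alpha)\neq 0$) is the key leverage.
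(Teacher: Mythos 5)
Your overall architecture coincides with the paper's proof of Theorem~\ref{fQbij}: the map $\pi\mapsto\bigl(\pi(\alpha s\,\rond R)\bigr)_{\alpha s\in\rond S}$, injectivity via the cascade formula, identification of the image with nonnegative left-fixed vectors of $Q$ normalized by $\sum v_{\alpha s}\kappa_{\alpha s}=1$, and surjectivity by building a consistent family of cylinder values and invoking Kolmogorov extension. However, two steps are genuine gaps rather than deferred technicalities. The first is the reduction to $\rond S$: for a general finite word $w$ the cascade formula gives $\pi(w\rond R)=\casc(w)\,\pi(\alpha_w s_w\rond R)$, but $\alpha_w s_w$ is the alpha-LIS of $w$, which need \emph{not} be the alpha-LIS of any context and hence need not lie in $\rond S$ --- your justification ``every alpha-LIS of a context lies in $\rond S$'' is a tautology that does not bridge this. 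The paper inserts an intermediate step: for $s$ internal, decompose $\alpha s\rond R$ into the disjoint union of the $\alpha c\rond R$ over \emph{finite} contexts $c=s\cdots$ together with the infinite contexts extending $s$, apply $\pi(\alpha c\rond R)=q_c(\alpha)\pi(c\rond R)$, and only then cascade each finite context down to its alpha-LIS, which does belong to $\rond S$. This decomposition is legitimate only because a stationary probability measure of a non-null VLMC vanishes on every right-infinite word and is positive on every finite cylinder (Lemma~\ref{lem:pineq0}), facts your sketch never establishes or uses. The vanishing on infinite contexts is what validates $1=\sum_{c\in\rond C^f}\pi(c\rond R)$ (hence the normalization and the necessity of (i)), and the positivity on cylinders is what forces each $\kappa_{\alpha s}$ to be finite; without controlling the possible mass on infinite contexts the whole reduction to $\rond S$ collapses.

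The second gap is in the uniqueness bookkeeping: you assert that the entries of $Q$ are positive because $q_c(\alpha)\neq 0$, and lean on this to extract two non-proportional nonnegative left-fixed vectors from a fixed space of dimension at least $2$. This premise is false: $Q_{\alpha s,\beta t}$ is a sum over finite contexts that simultaneously admit $t$ as a prefix and $\alpha s$ as alpha-LIS, and that index set can be empty even for a non-null VLMC --- the paper's appendix exhibits a non-null stable tree whose matrix $Q$ has many zero entries, and Remark~\ref{rem:realisation} discusses exactly this phenomenon. Consequently your argument that non-uniqueness of the fixed line yields two distinct stationary measures does not go through as written; the paper itself does not argue this way, but instead derives the existence/uniqueness dichotomy from the bijection of Theorem~\ref{fQbij} (and, where one-dimensionality of the fixed space is needed, obtains it from irreducibility and recurrence in the stable case). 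Relatedly, your converse direction silently assumes that the unique line of left-fixed vectors contains a nonzero nonnegative element with nonzero normalization, which requires justification beyond conditions (i)--(iii) as you have used them.
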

The state space $\rond R$ of a VLMC is uncountable, placing the question of existence and unicity of its invariant probability measures outside of the well marked out theory of Markov chains on countable state spaces.
Theorem~\ref{fQbij} comes down to searching and studying left-fixed vectors of the at most countable matrix $Q$.

\vskip 5pt
When $\rond S$ is finite, condition (iii) in the previous theorem is automatically satisfied as soon as (i) holds.
Furthermore, in that case, preceding condition (ii) gets a complete answer thanks to finite dimensional linear algebra.
In the very particular case of \emph{stable} context trees (see hereafter for a definition) having a \emph{finite} set of context alpha-LIS, Theorem~\ref{cor:finite} gives a complete characterization of VLMC's that admit stationary probability measures, which reduces to the convergence of the cascade series.

\vskip 5pt
Note that the characterization given in the previous theorem is expressed via the cascades and the probability distributions~$q_c$.
Nevertheless, the role of context alpha-LIS suggests that the \emph{shape} of the context tree matters a lot.

The case of \emph{stable} trees is particularly interesting, Section~\ref{sec:stable} is devoted to this case.
In particular, when a context tree is stable, the corresponding VLMC ends up owning renewal properties, which is not the case for a non-stable VLMC -- see Remark~\ref{rem:pasRenouveau}.

A tree is said \emph{stable} when it is stable by the shift. In other words, for any letter $\alpha\in\rond A$ and for any finite word $w$, if $\alpha w \in \rond T$ then $w \in \rond T$.
See Section~\ref{subsection:defstable} for a complete definition.
In the stable case, the crux of the matter is that the matrix $Q$ is always stochastic and can be interpreted as the transition matrix of some Markov chain
on the set of context alpha-LIS.
Indeed, when a VLMC $(U_n)$ is stable, if one denotes by $Z_n$ the alpha-LIS of $\pref(U_n)$, it turns out that the process $(Z_n)$ is an $\rond S$-valued semi-Markov chain.
This induced semi-Markov chain brings out some renewal times which are the moments $\pref(U_n)$ changes its alpha-LIS.
All this is detailed in Section~\ref{subsec:compareSM}.

It should be noticed that studying a stable VLMC $(U_n)$ is not just about studying the semi-Markov chain $(Z_n)$ mentioned above. 
Indeed, the trajectories of $(U_n)$ cannot be recovered from the trajectories of $(Z_n)$. 
See Remark \ref{rem:semiMarkov}. 
However, it is the properties of the matrix $Q$ detailed in Section \ref{subsection:properties_Q} that provide increasingly simple and manipulable necessary and sufficient condition for existence and unicity of a stationary probability measure for $(U_n)$ in Theorem \ref{th:stable} and Theorem \ref{cor:finite}. The latter theorem also provides the convergence of the distributions of $U_n$ to the stationary probability measure. 

\vskip 5pt
As a final remark, we add in Section \ref{subsec:resultsSM} another link between semi-Markov chains and VLMC: 
it is shown that any semi-Markov chain on a finite state space is a VLMC associated with some particular infinite stable probabilised context tree.
Consequently, one deduces from Theorem \ref{cor:finite} a necessary and sufficient condition for a non-null semi-Markov chain to admit a limit distribution. The same condition already appears in~\cite{barbu/limnios/08} for aperiodic irreducible semi-Markov chains as a sufficient condition.

\vskip 5pt
Throughout the text, without drowning the reader in a multitude of examples of context trees, we chose to present enough cases of context trees that:

- answer natural questions about the different assumptions

- sometimes provide explicit calculations

- illuminate results and proofs.


\vskip 5pt
Let us now indicate a non exhaustive range of domains where Variable Length Memory Chains are commonly used. 
VLMC are random models for character strings.
When they have a finite memory, they have been introduced in \cite{rissanen/83} to perform data compression.
They provide a parsimonious alternative to fixed order Markov chain models, in which the number of parameters to estimate grows exponentially fast with the order; they are also able to capture finer properties of character sequences. When they have infinite memory -- this will be our case of study -- they provide a tractable way to build models which are not finite order Markov chains. 
Furthermore they may be considered as a subclass of ``cha\^ines \`a  liaisons compl\`etes'' (\cite{doeblin/fortet/37}) or ``chains with infinite order'' (\cite{harris/55}).

Variable length memory chains are also a particular case of processes defined by a $g$-function (where the $g$-function is piecewise constant on a countable set of cylinders).
Stationary probability measures for VLMC are $g$-measures.
The question of uniqueness of $g$-measures has been adressed by many authors when the function $g$ is continuous (in this case, the existence is straightforward), see \cite{johansson/oberg/03}, \cite{fernandez/maillard/05}. Recently, interest raised also for the question of existence and uniqueness when $g$ is not continuous, see \cite{gallo/11}, \cite{gallo/garcia/13}, \cite{desantis/piccioni/12} for a perfect simulation point of view and the more ergodic theory flavoured \cite{gallo/paccaut/13} and \cite{ferreira/gallo/paccaut/19}.

VLMC are used in bioinformatics, linguistics or coding theory to modelize how random words grow or to classify words. In bioinformatics, both for protein families and DNA sequences, identifying patterns that have a biological meaning is a crucial issue. Using VLMC as a model enables to quantify the influence of a meaning pattern by giving a transition probability on the following letter of the sequence. In this way, these patterns appear as contexts of a context tree (\cite{bejerano/yona/01}). 
An appropriate model requires to consider possibly unbounded lengths.
In addition, when the context tree is recognised to be a signature of a family (of proteins say), this gives an efficient statistical method to test whether or not two samples belong to the same family (\cite{busch/etc/09}).

Therefore, estimating a context tree is an issue of interest and many authors (statisticians or not, applied or not) stress the fact that the height of the context tree should not be supposed to be bounded. This is the case in \cite{galves/leonardi/08} where the algorithm \texttt{CONTEXT} is used to estimate an unbounded context tree and also in \cite{garivier/leonardi/11}. Furthermore, as explained in \cite{csiszar/talata/06}, the height of the estimated context tree grows with the sample size so that estimating a context tree by assuming \emph{a priori} that its height is bounded is not realistic.

Classical random walks have independent and identically distributed increments. In the literature, Persistent Random Walks refer to random walks having a Markov chain of finite order as an increment process. For such walks, the dynamics of trajectories has a short memory of given length and the random walk itself is not Markovian any more. Recently, as pointed in 
\cite{cenac/chauvin/herrmann/vallois/13, CDLO, cenac:hal-01658494,CCPP4},
persistent random walks can be viewed as Random Walks with increments built from VLMC for an infinite context tree. 

In biology, persistent random walks are one possible model to address the question of anomalous diffusions in cells (see for instance \cite{FedTanZub}). Actually, such random walks are non Markovian, the displacements and the jumping times are correlated.

There is a large literature on constructing efficient estimators of context trees, as well for finite or infinite context trees.
Our point of view is not a statistical one, and we focus here on the probabilistic properties of infinite memory VLMC as random processes, and more specifically on the main property of interest for such processes: existence and uniqueness of a stationary measure.

In Section~\ref{sec:def}, the definitions of a general VLMC, LIS and alpha-LIS of finite words are given, leading to the main theorem (Theorem \ref{fQbij}). 
Section~\ref{sec:stable} is devoted to the stable case, providing a necessary and sufficient condition for the existence and unicity of an invariant probability measure for the VLMC. 
The correspondence with semi-Markov model is detailed.
Proofs are postponed in Section~\ref{sec:proofs}. 
Finally, Section~\ref{sec:open} is devoted to open problems and conjectures. 

\section{Definitions, notations and main results in the general case}
\label{sec:def}

\subsection{Probabilised context trees and VLMC}

In the whole paper, $\rond A$ denotes a finite set having at least two elements, called the \emph{alphabet}.
Its elements are called \emph{letters}.
All main results in the article hold for an arbitrary $\rond A$ but, for readability reasons, the proofs are written taking $\rond A=\set{0,1}$ whenever this assumption can be made without loss of generality.
Let $\rond R$ be the set of \emph{right-infinite words} on the alphabet, written by simple concatenation:
\[
\rond R=\set{\alpha\beta\gamma\cdots :~\alpha,\beta,\gamma\cdots\in\rond A}.
\]
The set of finite \emph{words}, sometimes denoted by $\rond A^*$ in the literature, will be denoted by $\rond W$:
\[
\rond W=\bigcup _{n\in\g N}\rond A^n,
\]
the set $\rond A^0:=\set{\emptyset}$ being reduced to the empty word\footnote{In the whole paper, $\g N=\set{0,1,\dots}$ denotes the set of non-negative integers.}.
When $v,w\in\rond W$ and $r\in\rond R$, the concatenation of $v$ and $w$ (\emph{resp.} $w$ and $r$) is denoted by $vw$ (\emph{resp.} $wr$).
Moreover, a finite word $w$ being given, \[w\rond R\]
denotes the cylinder made of right-infinite words having $w$ as a prefix.

\vskip 5pt
A VLMC is an $\rond R$-valued Markov chain, defined by a so-called \emph{probabilised context tree}.
We give hereunder a compact description.
One can refer to~\cite{cenac/chauvin/paccaut/pouyanne/12} for an extensive definition\footnote{In \cite{cenac/chauvin/paccaut/pouyanne/12}, and in most of the literature on the subject, VLMC are processes on \emph{left-}infinite words, growing to the right.
This convention forces to make frequently use of reversed words in the discourse.
Because of this drawback, we make here the opposite choice.}.

\vskip 5pt
A \emph{context tree} is a rooted tree $\rond T$ built on the alphabet $\rond A$, which has an \emph{at most countable set of infinite branches};
an infinite sequence $r\in\rond R$ is an \emph{infinite branch} of $\rond T$ whenever all its finite prefixes belong to $\rond T$.
As usual, the nodes of the tree are canonically labelled by words on $\rond A$.
In the example of Figure~\ref{fig:ex0}, the alphabet is $\set{0,1}$ and the tree has two infinite branches: $(01)^\infty$ and $1^\infty$. 
For a finite word $w\in\rond W$, $w^\infty$ denotes the right-infinite word $www\cdots$.
A node of a context tree $\rond T$ will be called a \emph{context} when it is a finite leaf or an infinite branch of $\rond T$.
The sets of all contexts, finite leaves and infinite branches are respectively denoted by
\[
\rond C,~\rond C^f
{\rm ~and~}
\rond C^i.
\]
These sets are at most countable.
A finite word $w\in\rond W$ will be called an \emph{internal node} when it is \emph{strictly} internal as a node of $\rond T$;
it will be called \emph{non-external} whenever it is internal or a context.
In the same vein a finite word or a right-infinite sequence will be said \emph{external} when it is strictly external and \emph{non-internal} when it is external or a context.
The set of internal words
is denoted by
\[
\rond I.
\]

\begin{rem}
An infinite tree on a finite alphabet being given, the fact that it is a context tree or not is not directly related to the growth of the number $f(n)$ of leaves at height $n$ when $n$ tends to infinity.
Indeed, $f(n)$ may grow slowly whereas the set of infinite branches is not countable. Conversely, $f(n)$ may grow rapidly while the set of infinite branches is countable.
One can refer to the first appendix in~\cite{ferreira/gallo/paccaut/19} for more precise statements.
\end{rem}

\begin{defi}[\emph{cont} of a non-internal word]
Let $\rond T$ be a context tree and $w$ be a non-internal finite or infinite word.
Then, $\pref (w)$ denotes the unique prefix of $w$ which is a context of $\rond T$.
\end{defi}
For a more visual representation, hang $w$ by its head (its left-most letter) and insert it into the tree, the head of $w$ being placed at the root;
the only context through which the word goes out of the tree is its \emph{cont}
-- see Figure~\ref{fig:ex0}.

\begin{figure}[h]
\begin{center}
\begin{tikzpicture}[scale=0.6]
\tikzset{every leaf node/.style={circle,fill,scale=0.5},every internal node/.style={circle,fill,circle,scale=0.01}}
\Tree
[.{} 
\edge[line width=2pt,red];[.{} 
{}; 
\edge[line width=2pt,red];[.{} 
\edge[line width=2pt,red];[.{} 
{}; 
\edge[line width=2pt,red];[.{} 
[.{} 
{}; 
[.{} 
[.{} 
{}; 
[.{} 
\edge[line width=2pt,dashed];\node[fill=white,draw=white]{};
{}; 
]
]
{}; 
]
]
\edge[line width=2pt,red];\node{};
]
]
{}; 
]
]
[.{} 
{}; 
[.{} 
{}; 
[.{} 
{}; 
[.{} 
{}; 
[.{} 
{}; 
[.{} 
{}; 
[.{} 
{}; 
[.{} 
{}; 
\edge[line width=2pt,dashed];\node[fill=white,draw=white]{};
]
]
]
]
]
]
]
]
]
\draw (0,0.5) node{$\emptyset$};
\draw (-1.7,-0.8) node{$0$};
\draw (1.6,-0.8) node{$1$};
\draw (-2.9,-2.4) node{$00$};
\draw (2.2,-2) node{$11$};
\draw (-2.9,-4.4) node{$0100$};
\draw (2.5,-8.5) node{$1^70$};
\draw (4.5,-9.5) node{$1^\infty$};
\draw (-1.5,-9.8) node{$(01)^\infty$};
\draw (0.5,-5.5) node{$c$};
\draw (10,-5) node{$c=01011=\pref ({\color{red}01011}1101000\cdots)$};
\end{tikzpicture}
\end{center}
\caption{an example of context tree on the alphabet $\rond A=\set{0,1}$.
It has two infinite branches: $1^\infty$ and $(01)^\infty$.
The \emph{cont} of any right-infinite word or finite word beginning by  $010111101000\cdots$ is the context $01011$.
\label{fig:ex0}}
\end{figure}

A \emph{probabilised context tree} is a context tree $\rond T$ endowed with a family of probability measures $q=\left( q_c\right) _{c\in\rond C}$ on $\rond A$ indexed by the (finite and infinite) contexts of $\rond T$.
To any probabilised context tree, one can associate a {\bf VLMC} (Variable Length Memory Chain), which is the $\rond R$-valued Markov chain $\left( U_n\right)_{n\geq 0}$ defined by its transition probabilities given by
\begin{equation}
\forall n\geq 0,~\forall\alpha\in\rond A,~\Proba\left( U_{n+1}=\alpha U_n | U_n\right)
=q_{\pref\left(U_n\right)}\left(\alpha\right).
\end{equation}

The set $\rond R$ is endowed with its cylinder $\sigma$-algebra, generated by the cylinders $w\rond R$, $w\in\rond W$.
In the whole paper, the left-most letter of the sequence $U_n\in\rond R$ is denoted by $X_n$ so that the random sequences grow by adding successive letters $X_0$, $X_1$, $X_2,\dots$ on the left of $U_0$:
\[
\forall n\geq 0,~U_{n+1}=X_{n+1}U_n.
\]

\begin{rem}
\label{pasTrivial}
A context tree is never empty because it contains at least its root.
The smallest context tree is thus reduced to its root $\emptyset$.
Once probabilised by a single probability measure $q _\emptyset$ on $\rond A$, this tree gives rise to the simplest VLMC which consists in a sequence of  \emph{i.i.d.\@} $q _\emptyset$-distributed random variables $\left( X_n\right) _n$.
Besides, the tree $\set\emptyset$ is the only context tree that does not get any internal node.
Since the combinatorial aspect of our study is heavily based on internal nodes of context trees (notion of LIS, see Section~\ref{subsec:cascade}), we make the following small restriction.
\begin{center}
-- In the whole paper, all context trees are supposed not to be reduced to their root. --
\end{center}
\end{rem}

\begin{rem}
\label{rem:notMarkov}
When the context tree has at least one infinite context, the initial letter process $\left( X_n\right) _{n\geq 0}$ is generally \emph{not} a Markov process.
When the context tree is finite, $\left( X_n\right) _{n\geq 0}$ is a usual $\rond A$-valued Markov chain
whose order is the height of the tree, \emph{i.e.} the length of its longest branch.
\end{rem}

\vskip 5pt
This section ends by two definitions that will be used in the sequel:
our main results on VLMC hold for non-null ones and the shift appears as a useful technical tool.

\begin{defi}[non-nullness]
\label{defi:non-null}
A probabilised context tree $(\rond T,q)$ is \emph{non-null} whenever $q_c(\alpha)\neq 0$ for every $c\in\rond C$ and every $\alpha\in\rond A$.
A \emph{non-null VLMC} is a VLMC defined by a non-null probabilised context tree.
\end{defi}

\begin{defi}[shift mapping]
\label{def:shift}
The \emph{shift mapping} $\sigma:\rond R\to\rond R$ is defined by $\sigma\left(\alpha\beta\gamma\delta\cdots\right) =\beta\gamma\delta\cdots$.
The definition is extended to finite words (with $\sigma(\emptyset)=\emptyset$).
\end{defi}

The $k$-th iteration of $\sigma$ is denoted by $\sigma ^k$ (and $\sigma ^0$ denotes the identity map on $\rond R$ or $\rond W$).

\subsection{LIS and alpha-LIS, cascades and cascade series}
\label{subsec:cascade}

As pointed out in the introduction, the study of invariant probability measures naturally leads to the following notion of Longest Internal Suffix.
If $w\in\rond W$ is a non-empty finite word, $w$ can be uniquely written as 
\[
w=\beta _1\beta _2\dots\beta _{p_w}\alpha _ws_w,
\]
where
 
$\bullet$ $p_w\geq 0$ and $\beta_i\in \rond A$, for all $i\in\{ 1, \dots , p_w\}$, 

$\bullet$ $\alpha _w\in\rond A$,

$\bullet$ $s_w$ is the longest internal strict suffix of $w$.

Note that $s_w$ may be the empty word.
When $p_w=0$, there are no $\beta$'s and $w=\alpha _ws_w$.

\begin{defi}[LIS and alpha-LIS]
\label{def:alphaLIS}
Let $\rond T$ be a context tree and $w$ a finite non-empty word on~$\rond A$.
With the notations above, the Longest Internal Suffix $s_w$ is abbreviated as the \emph{LIS} of $w$;
the non-internal suffix $\alpha _ws_w$ is called the \emph{alpha-LIS} of $w$.
\end{defi}

To compute the LIS of a non-empty finite word $w=\beta_1\beta_2\dots\beta_n$, check whether $\beta_2\beta_3\dots\beta_n$ is internal or not.
If it is internal, that is the LIS of $w$.
If not, check whether $\beta_3\beta_4\dots\beta_n$ is internal or not, etc.
The first time you get an internal suffix
(this happens inevitably because $\emptyset$ is always an internal word, the context tree being not reduced to its root, see Remark~\ref{pasTrivial}),
this suffix is the LIS of $w$.

Any word has an alpha-LIS, but the objects of main interest are the alpha-LIS of \emph{contexts}.
The set of alpha-LIS \emph{of finite contexts} of $\rond T$ will be denoted by $\rond S(\rond T)$, or more shortly by $\rond S$:
\[
\rond S=\set{\alpha _cs_c,~c\in\rond C^f};
\]
this is an at most countable set (like $\rond C$).
For any $u,v,w\in\rond W$, the notations
\begin{equation}
\label{notationsPrefLis}
v=u\cdots
{\rm ~~and~}
w=\cdots [u]
\end{equation}
stand respectively for ``$u$ is a prefix of $v$'' and ``$u$ is the alpha-LIS of $w$''.

\begin{exa}[computation of a LIS]
\label{exa:LIS}\ 

\begin{minipage}[t]{0.6\textwidth}
\vspace{0pt}
In this example, the alphabet is $\rond A=\set{0,1}$ and the context tree is defined by its finite contexts which are the following ones:
$(01)^p00$, $(01)^r1$, $01^r0$, $1^q00$, $1^q01$, $p\geq 0$, $q\geq 1$, $r\geq 2$.

Take for example the context $010100$, colored red in the context tree. Remove successively letters from the left until you get an internal word: $10100$ is external, $0100$ is noninternal, $100$ is noninternal, $00$ is noninternal.
In this sequence, the suffix $0$ is the first internal one:
this is the LIS of $010100$.
The last removed letter is $\alpha=0$ so that the alpha-LIS of $010100$ is $00$.
\end{minipage}
\begin{minipage}[t]{0.4\textwidth}
\vspace{0pt}
\centering
\begin{tikzpicture}[scale=0.42]
		\tikzset{every leaf node/.style={draw,circle,fill},every internal node/.style={draw,circle,scale=0.01}}
		\Tree [.{} [.{} {} [.{} [.{} {} [.{} [.{} \node[fill=red,draw=black]{}; [.{} [.{} {} [.{} \edge[line width=2pt,dashed];\node[fill=white,draw=white]{}; \edge[line width=2pt,dashed];\node[fill=white,draw=white]{}; ] ] {} ] ] {} ] ] [.{} {} [.{} {} [.{} {} [.{} {} [.{} {} [.{} \edge[line width=2pt,dashed];\node[fill=white,draw=white]{}; \edge[line width=2pt,dashed];\node[fill=white,draw=white]{}; ] ] ] ] ] ] ] ] [.{} [.{} {} {} ] [.{} [.{} {} {} ] [.{} [.{} {} {} ] [.{} [.{} {} {} ] [.{} [.{} {} {} ] [.{} [.{} {} {} ] [.{} \edge[line width=2pt,dashed];\node[fill=white,draw=white]{}; \edge[line width=2pt,dashed];\node[fill=white,draw=white]{}; ] ] ] ] ] ] ] ]
\end{tikzpicture}
\end{minipage}
\vskip 5pt
In the following array, the left-hand column consists in the list of alpha-LIS of all the finite contexts of the tree.
For every $\alpha s\in\rond S$, the list of all finite contexts having $\alpha s$ as an alpha-LIS is given in the right-hand column.
\begin{center}
\begin{tabular}{r|l}
$\alpha s\in\rond S$&\emph{finite contexts having $\alpha s$ as an alpha-LIS}\\
\hline
$00$&$1^q00$, $(01)^p00$, $p\geq 0$, $q\geq 1$\\
$101$&$1^q01, q\geq 1$\\
$01011$&$(01)^r1$, $r\geq 2$\\
$01^r0$, $r\geq 2$&$01^r0$
\end{tabular}
\end{center}
\end{exa}

\vskip 5Pt
\begin{rem}
The finiteness of the set $\rond C^i$ of infinite branches on one side, and that of the set $\rond S$ of context alpha-LIS on the other side are not related.
In Example~\ref{exa:pgpd}, one finds a context tree for which $\rond S$ is finite while $\rond C^i$ is infinite.
In the tree of Example~\ref{exa:LIS}, $\rond S$ is infinite while $\rond C^i$ is finite.
The left-comb of left-combs has infinite $\rond C^i$ and $\rond S$
(see Remark~\ref{rem:realisation}).
Finally, the double bamboo (see page~\pageref{tikz:doublebamboo})  has finite $\rond C^i$ and $\rond S$.
\end{rem}

\begin{defi}[cascade]
\label{def:cascade}
Let $\left(\rond T,q\right)$ be a probabilised context tree.
If $w\in\rond W$ writes $w=\beta _1\beta _2\dots\beta _{p}\alpha s$ where $p\geq 0$ and where $\alpha s$ is the alpha-LIS of $w$, the \emph{cascade} of $w$ is defined as
\[
\casc (w)=\prod _{1\leq k\leq p}q_{\pref\sigma ^k(w)}\left(\beta _k\right),
\]
where an empty product equals $1$, which occurs if and only if $w$ is equal to its own alpha-LIS.
In the above formula, $\sigma$ denotes the shift mapping, see Definition~\ref{def:shift}.
The cascade of $\emptyset$ is defined as being $1$.
\end{defi}

Note that $\casc (\alpha s) = 1$ for any $\alpha s\in\rond S$.
In Example \ref{exa:LIS}, $\casc(010100)=q_{101}(0)q_{0100}(1)q_{100}(0)q_{00}(1)$.

\begin{rem}
\label{rem:cascRem}
Assume that $\rond A=\set{0,1}$.
For any $w\in\rond W$, $\casc (w)=\casc (0w)+\casc (1w)$ if and only if $w$ is non-internal;
indeed, if $w$ is internal, the sum equals $2$ whereas $\casc (w)\leq 1$.
This equivalence generalizes straightforwardly to an arbitrary alphabet.
\end{rem}

\begin{defi}[cascade series]
\label{defi:cascade_series}
For every  $\alpha s\in\rond S$, the \emph{cascade series of $\alpha s$} (related to $(\rond T,q)$) is the at most countable family of cascades of the finite contexts having $\alpha s$ as their alpha-LIS.
In other words, with notations~\eqref{notationsPrefLis}, it is the family
\[
\left(\casc (c)\right) _{c\in\rond C^f,~c=\cdots [\alpha s]}.
\]
\end{defi}

Since the cascades are positive numbers, the summability of a family of cascades of a probabilised context tree is equivalent to the convergence of the series associated to any total order on the set of contexts indexing the family.
The assertion
\begin{equation}
\label{convCasc}
\forall \alpha s\in\rond S,~\sum _{\substack{{c\in\rond C^f}\\[2pt]{c=\cdots [\alpha s]}}}\casc (c)<+\infty
\end{equation}
will be called \emph{convergence of the cascade series}.
For every $\alpha s\in\rond S$ and $k\geq 1$, denote
\begin{equation}
\label{def:ck}
\kappa _{\alpha s}(k)
=\sum _{\substack{{c\in\rond C^f},~c=\cdots [\alpha s]\\[2pt] |c| = |\alpha s|+k-1}}\casc (c).
\end{equation}
When the cascade series converge, $\kappa _{\alpha s}$ denotes the sum of the cascade series relative to $\alpha s\in\rond S$:
\begin{equation}
\label{defKappa}
\kappa _{\alpha s}=\sum _{\substack{{c\in\rond C^f}\\[2pt]{c=\cdots [\alpha s]}}}\casc (c) = \sum_{k\geq 1} \kappa _{\alpha s}(k).
\end{equation}

In the following sections, the convergence of cascade series turns out to be an important part of the characterization of stationary probability measures.
This is made precise by Theorem~\ref{fQbij} and Theorem~\ref{th:stable}.
In some particular cases, the convergence of cascade series just becomes a necessary and sufficient condition for existence and unicity of an invariant probability measure (see Theorem~\ref{cor:finite}).

\subsection{Alpha-LIS matrix $Q$ and left-fixed vectors}
\label{subsec:Q}

For any $(\alpha s,\beta t)\in\rond S^2$, with notations~\eqref{notationsPrefLis}, define
\begin{equation}
\label{defQ}
Q_{\alpha s,\beta t}
=\sum _{\substack{c\in\rond C^f\\[2pt]c=t\cdots\\[2pt]c=\cdots [\alpha s]}}
\casc\left(\beta c\right)
\in [0,+\infty ].
\end{equation}
As the set $\rond S$ is at most countable, the family $Q=\left( Q_{\alpha s,\beta t}\right) _{(\alpha s,\beta t)\in\rond S^2}$ will be considered a matrix, finite or countable, for an arbitrary order on $\rond S$.
The convergence of the cascade series of $(\rond T,q)$ is sufficient to ensure the finiteness of $Q$'s entries.

\vskip 5pt
The matrix $Q$ plays a central role in the statement of Theorem~\ref{fQbij}, which is the main result of the paper.

\begin{defi}[left-fixed vector of a matrix]
Let $A=\left( a_{\ell ,c}\right)_{(\ell ,c)\in\rond E^2}$ be a matrix with real entries, indexed by a totally ordered set $\rond E$ supposed to be finite or denumerable.
A \emph{left-fixed vector} of $A$ is a row-vector $X=(x_{k})_{k\in\rond E}\in\g R^{\rond E}$, indexed by $\rond E$, such that $XA=X$.
In particular, this implies that the usual matrix product $XA$ is well defined, which means that for any $c\in\rond E$, the series $\sum _\ell x_\ell a_{\ell ,c}$ is convergent.
Note that, whenever $X$ and $A$ are infinite dimensional and have nonnegative entries, this summability does not depend on the chosen order on the index set $\rond E$.
\end{defi}

\subsection{Stationary measures for a VLMC}
\label{sec:general}

Definitions and notations of the previous sections allow us to state results on stationary measures for a VLMC.
In this section no assumption is made on the shape of the context tree. After two key lemmas, we state the main Theorem~\ref{fQbij} that establishes precise connections between stationary probability measures of the VLMC and left-fixed vectors of the matrix $Q$ defined in Section~\ref{subsec:Q}.
Theorem~\ref{fQbij} is valid for any context tree.
Section~\ref{sec:stable} shows what happens to this result when assumptions (stability, mainly) are made on the shape of the tree.
In particular, Remark~\ref{rem:finite} shows how Theorem~\ref{fQbij} (or Theorem~\ref{cor:finite}) applies in the case of finite trees.

\begin{defi}[stationary probability measure for a VLMC]
\label{def:mesInv}
Let $U=\left( U_n\right) _{n\geq 0}$ be a VLMC.
A probability measure $\pi$ on $\rond R$ is said \emph{$U$-stationary} (or also \emph{$U$-invariant}) whenever $\pi$ is the distribution of every $U_n$ as soon as it is the distribution of $U_0$.
\end{defi}

Assume that $\pi$ is a probability measure on $\rond R$, invariant for a VLMC defined on a given context tree.
As already mentioned in the introduction, $\pi\left(w\rond R\right) =q_{\pref (v)}\left(\alpha\right)\pi\left( v\rond R\right)$ for any letter $\alpha$ and any non-internal finite word $w=\alpha v$.
The cascade of $w$ is the product that arises after the largest number of possible iterations of that formula, so that $\pi\left( w\rond R\right) =\casc (w)\pi\left(\alpha _ws_w\rond R\right)$.
These formulae are the subject of the simple but very useful Lemma~\ref{lem:cascade}, named Cascade Formulae.
Equality~\eqref{cascade3} can be seen as a founding formula that leads to Theorem~\ref{fQbij}.

\begin{lem}
\label{lem:cascade}
{\bf (Cascade formulae)}

Let $(\rond T,q)$ be a probabilised context tree and $\pi$ be a stationary probability measure for the corresponding VLMC.

(i)
For every non-internal finite word $w$ and for every $\alpha\in\rond A$,
\begin{equation}
\label{cascade1}
\pi\left(\alpha w\rond R\right) =q_{\pref (w)}(\alpha )\pi\left(w\rond R\right) .
\end{equation}

(ii)
For every right-infinite word $r\in\rond R$ and for every $\alpha\in\rond A$,
\begin{equation}
\label{cascade2}
\pi\left(\alpha r\right) =q_{\pref (r)}(\alpha )\pi\left( r\right) .
\end{equation}

(iii)
For every finite non empty word $w$, if one denotes by $\alpha_ws_w$ the alpha-LIS of $w$, then
\begin{equation}
\label{cascade3}
\pi\left(w\rond R\right) =\casc(w)\pi\left(\alpha _ws_w\rond R\right) .
\end{equation}
\end{lem}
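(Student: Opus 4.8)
The plan is to prove the three cascade formulae in order, each one feeding into the next. For part (i), I would start from the definition of stationarity: since $\pi$ is $U$-invariant, if $U_0\sim\pi$ then $U_1\sim\pi$ as well. Apply this to the cylinder event $\{U_1\in\alpha w\rond R\}$, i.e. $\{X_1=\alpha,\ U_0\in w\rond R\}$. Using the transition probabilities $\Proba(U_1=\alpha U_0\mid U_0)=q_{\pref(U_0)}(\alpha)$ and conditioning on $U_0$, one gets
\[
\pi\left(\alpha w\rond R\right)=\Proba(X_1=\alpha,\ U_0\in w\rond R)=\Espe\left[\ind{U_0\in w\rond R}\,q_{\pref(U_0)}(U_0)(\alpha)\right].
\]
The key observation is that $w$ being non-internal forces $\pref(u)$ to be the \emph{same} context for every $u\in w\rond R$: indeed $w$ is non-internal means $w$ contains a context as a prefix (or is itself a context), so $\pref(u)=\pref(w)$ depends only on $w$, not on the tail of $u$. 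Hence $q_{\pref(U_0)}$ is constant equal to $q_{\pref(w)}$ on the event $\{U_0\in w\rond R\}$, and the expectation factors as $q_{\pref(w)}(\alpha)\,\pi(w\rond R)$.

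For part (ii), the statement concerns the measure of singletons $\{\alpha r\}$ rather than cylinders, so I would obtain it as a limiting case of (i): write $r$ as the decreasing intersection of the cylinders $r_{|n}\rond R$ where $r_{|n}$ is the length-$n$ prefix of $r$. For $n$ large enough, $r_{|n}$ is non-internal (since $r$ is a right-infinite word and the context tree has an at most countable set of contexts, any infinite branch eventually leaves every internal node — more precisely, $\pref(r)$ is a finite context appearing as a prefix of $r_{|n}$ for $n$ large, or $r$ itself is an infinite branch, in which case $\pref(r)=r$ and one argues directly). Apply (i) to each such $r_{|n}$, noting $\pref(r_{|n})=\pref(r)$ is eventually constant, and pass to the limit $n\to\infty$ using continuity of $\pi$ from above (the cylinders shrink to $\{\alpha r\}$ and to $\{r\}$ respectively). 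This yields $\pi(\alpha r)=q_{\pref(r)}(\alpha)\,\pi(r)$.

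For part (iii), I would iterate (i). Write $w=\beta_1\beta_2\cdots\beta_{p_w}\alpha_w s_w$ with $\alpha_w s_w$ the alpha-LIS of $w$ as in Definition~\ref{def:alphaLIS}. For each $k$ from $1$ to $p_w$, the word $\sigma^k(w)=\beta_{k+1}\cdots\beta_{p_w}\alpha_w s_w$ is non-internal: by the very definition of the LIS, $s_w$ is the \emph{longest} internal strict suffix, so every longer suffix $\sigma^k(w)$ with $k\le p_w$ is non-internal. Hence (i) applies at each step:
\[
\pi\left(\sigma^{k-1}(w)\rond R\right)=\pi\left(\beta_k\,\sigma^k(w)\rond R\right)=q_{\pref(\sigma^k(w))}(\beta_k)\,\pi\left(\sigma^k(w)\rond R\right).
\]
Multiplying these $p_w$ equalities telescopes to $\pi(w\rond R)=\bigl(\prod_{1\le k\le p_w}q_{\pref\sigma^k(w)}(\beta_k)\bigr)\pi(\alpha_w s_w\rond R)=\casc(w)\,\pi(\alpha_w s_w\rond R)$, by Definition~\ref{def:cascade}. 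The case $p_w=0$ (where $w$ equals its own alpha-LIS) is trivial since the empty product is $1$.

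The main obstacle is the careful bookkeeping around which suffixes are non-internal and the claim that $\pref$ is constant on a cylinder based on a non-internal word; this is really the combinatorial heart of the argument, and part (ii)'s passage to the limit (distinguishing the case where $r$ is itself an infinite branch) requires a little care, but no deep machinery. Everything else is a routine computation with transition probabilities and monotone continuity of measures.
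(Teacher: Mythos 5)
Your proof is correct and, for parts (i) and (iii), essentially coincides with the paper's: (i) is the same stationarity-plus-conditioning computation (your observation that $\pref$ is constant on the cylinder $w\rond R$ when $w$ is non-internal is exactly the point, and your expectation formulation even sidesteps the separate treatment of the case $\pi\left(w\rond R\right)=0$ that the paper needs because it writes conditional probabilities), and (iii) is the same telescoping induction on (i), justified as you do by the maximality of the LIS. The one genuine divergence is (ii): the paper does \emph{not} deduce it from (i) by a limit, but repeats the direct conditioning argument verbatim for the singleton $\{\alpha r\}$, using that $U_1=\alpha r$ forces $U_0=r$. That direct route is shorter and uniform in $r$, whereas your limiting argument via the cylinders $r_{|n}\rond R$ only works when $\pref(r)$ is a finite context (so that $r_{|n}$ is eventually non-internal) and you must fall back on the very same direct computation in the case where $r$ is an infinite branch --- the step you label ``one argues directly''. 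That fallback is a one-line conditioning identical to your proof of (i), so there is no gap, but be aware that the case split is unavoidable in your approach and that the infinite-branch case cannot be reached as a limit of instances of (i); spelling out that one line would make the argument complete.
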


A proof of Lemma~\ref{lem:cascade} can be found at the beginning of Section~\ref{sec:proofs} on page~\pageref{proof:cascade}.

The following lemma ensures that a stationary probability measure weights finite words and only finite words.
\begin{lem}
\label{lem:pineq0}
Let $\pi$ be a stationary probability measure of a non-null VLMC.
Then 

(i) $\forall w\in\rond W$, $\pi\left( w\rond R\right)\neq 0$;

(ii) $\forall r\in\rond R$, $\pi (r)=0$.
\end{lem}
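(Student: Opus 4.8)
The plan is to prove (i) first and then deduce (ii) from it. For (i), the key observation is that a stationary probability measure is in particular a probability measure on $\rond R$, so $\pi(\rond R) = 1$, and every cylinder $w\rond R$ decomposes as a finite disjoint union $w\rond R = \bigsqcup_{\alpha\in\rond A}\alpha w\rond R$. I would argue by contradiction: suppose $\pi(w\rond R) = 0$ for some finite word $w$. Using the Cascade Formula~\eqref{cascade3}, $\pi(w\rond R) = \casc(w)\,\pi(\alpha_w s_w\rond R)$, and since the probabilised context tree is non-null all the factors $q_c(\beta)$ appearing in $\casc(w)$ are strictly positive, hence $\casc(w) > 0$; therefore $\pi(w\rond R) = 0$ forces $\pi(\alpha_w s_w\rond R) = 0$. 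So it suffices to show that $\pi(\alpha s\rond R) \neq 0$ for every alpha-LIS $\alpha s$, in fact for every word of the form $\alpha s$ with $s$ internal. Now I would run the argument the other way: starting from the root, $1 = \pi(\rond R) = \sum_{\alpha\in\rond A}\pi(\alpha\rond R)$, so at least one letter $\alpha_0$ has $\pi(\alpha_0\rond R) > 0$; but by Formula~\eqref{cascade1} applied to the non-internal word $\emptyset$ (the root is a context here — careful, one must instead use that $\emptyset$ is internal and iterate differently), more robustly: for any internal word $s$ one has $s\rond R = \bigsqcup_\alpha \alpha s\rond R$, and $\alpha s$ is non-internal precisely when $s$ is internal and we've stepped out, so~\eqref{cascade1} gives $\pi(\alpha s\rond R) = q_{\pref(s)}(\alpha)\pi(s\rond R)$ only when $s$ itself is non-internal — so instead I would propagate positivity downward along internal nodes: show by induction on $|s|$ over internal words $s$ that $\pi(s\rond R) > 0$, using that if $s$ is internal then all its prefixes are internal or the root, and at each branching $\pi(s\rond R) = \sum_\alpha \pi(\alpha s\rond R)$ together with the cascade relation pushing the mass of the non-internal children back up.

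The cleanest route, which I would actually carry out, is this: fix any finite word $w$ and let $\alpha_w s_w$ be its alpha-LIS, with $s_w$ internal. Since $\casc(w) > 0$ by non-nullness, it is enough to prove $\pi(\alpha s\rond R) > 0$ for every non-internal word of the form $\alpha s$ with $s$ internal. Consider the finite word $s\beta$ — no: rather, observe that $\pi(\rond R) = 1$ and decompose $\rond R$ using the contexts: $\rond R = \bigsqcup_{c\in\rond C} c\rond R$ (every right-infinite word has exactly one context prefix). Hence some context $c$ has $\pi(c\rond R) > 0$ (if $c$ is infinite read $c\rond R$ as the singleton/branch cylinder; handle finite $c$ first). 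For a finite context $c$, Formula~\eqref{cascade3} gives $\pi(c\rond R) = \casc(c)\,\pi(\alpha_c s_c\rond R)$, so $\pi(\alpha_c s_c\rond R) > 0$ for at least one alpha-LIS. To upgrade "at least one" to "all", I would use reversibility of the cascade decomposition: from any alpha-LIS $\alpha s$ with positive mass one can reach any other finite word $w'$ by noting $\pi(w'\rond R) \geq \pi(\text{some cylinder obtained by prepending letters})$, and prepending letters to $\alpha s$ only multiplies by strictly positive $q$-values while staying inside cylinders with the same alpha-LIS or larger — combined with the connectivity provided by the non-null transition structure of the Markov chain. The honest and shortest version: non-nullness means the VLMC $(U_n)$ can go from any state to a neighbourhood of any cylinder $w\rond R$ in $|w|$ steps with positive probability, and stationarity plus this irreducibility-type statement forces $\pi(w\rond R) > 0$.

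For (ii), once (i) is known, fix $r\in\rond R$ and note $\{r\} = \bigcap_{n\geq 0}\big(r_1\cdots r_n\big)\rond R$ is a decreasing intersection of cylinders, so $\pi(\{r\}) = \lim_{n\to\infty}\pi\big((r_1\cdots r_n)\rond R\big)$ by continuity from above of the finite measure $\pi$. It therefore suffices to show $\pi\big((r_1\cdots r_n)\rond R\big)\to 0$. Using Formula~\eqref{cascade2} repeatedly, $\pi(\alpha r') = q_{\pref(r')}(\alpha)\pi(r')$ for the infinite word, and more usefully the finite-cylinder version: write $r_1\cdots r_n = \beta_1\cdots\beta_{p}\alpha s$ with alpha-LIS $\alpha s$, and $\pi\big((r_1\cdots r_n)\rond R\big) = \casc(r_1\cdots r_n)\,\pi(\alpha s\rond R)$; the cascade is a product of $p$ factors each $\leq 1$ (indeed each $q_c(\beta) < 1$ strictly, since $q_c$ is a probability measure on an alphabet of size $\geq 2$ with all letters positive, so $q_c(\beta)\leq 1 - \min_\gamma q_c(\gamma) < 1$), and as $n\to\infty$ the number $p = p_{r_1\cdots r_n}$ of such factors tends to infinity (the alpha-LIS $\alpha s$ is internal and of bounded-for-fixed-prefix length... one must check $p_n\to\infty$, which holds because $s_w$ has length at most the height of the finite part reachable, or by a pigeonhole/renewal argument on how often the LIS resets). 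A uniform bound $q_c(\beta)\leq \delta < 1$ would require a uniform lower bound on $\min_\gamma q_c(\gamma)$ over contexts, which need not hold in general — so here I expect the main obstacle, and the fix is to argue instead directly from (i) and stationarity: each $U_n$ has law $\pi$, and $\pi(\{r\}) = \Proba(U_0 = r)$; if this were positive then by stationarity $\Proba(U_n = r) > 0$ for all $n$, but the event $\{U_n = r\}$ forces $X_n = r_1, X_{n+1} = r_2,\dots$, an event of the form "the newly added letters follow a prescribed infinite pattern", whose probability is $\prod_{k\geq 1} q_{c_k}(r_k)$ along the contexts visited — and this infinite product, being a limit of partial products each strictly less than the previous by a strictly-less-than-one factor, either converges to $0$ or one must show it cannot equal a fixed positive number consistently under stationarity; pinning down this last point (ruling out a positive infinite product) is the technical heart, and I would resolve it by the continuity-from-above argument above applied to $\pi$ itself, which only needs that infinitely many factors are bounded away from $1$ along $r$ — and if only finitely many are, then from some point on the walk follows a single infinite context $c^{(\infty)} = \pref(r')$ with $q_{c^{(\infty)}}$ degenerate, contradicting non-nullness. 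That dichotomy is the step I'd expect to write most carefully.
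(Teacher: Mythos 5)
Your part (i) is essentially fine: the closing argument --- from any $u\in\rond R$ the chain enters $w\rond R$ in exactly $|w|$ steps with probability $\prod_k q_{\pref(\cdot)}(w_k)>0$ by non-nullness, and integrating this everywhere-positive function against the stationary $\pi$ gives $\pi(w\rond R)>0$ --- is correct, and it is a more directly probabilistic route than the paper's, which instead proves the implication $\pi(\alpha w\rond R)=0\Rightarrow\pi(w\rond R)=0$ by induction on $|w|$, splitting into the cases $w$ non-internal (Formula~\eqref{cascade1}) and $w$ internal (disjoint union over the contexts extending $w$). Both work; yours is shorter, the paper's stays inside the cascade formalism it reuses constantly. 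The long ``upgrade from one alpha-LIS to all'' discussion preceding your final sentence is not a proof and should simply be dropped in favour of that last argument.

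The genuine gap is in (ii), exactly at the point you yourself flag as the technical heart. Your dichotomy is: either infinitely many of the factors $q_{\pref(r_{k+1})}(r_k)$ are bounded away from $1$ (so the product tends to $0$), or only finitely many are, in which case you claim the walk eventually follows a single infinite context whose $q$ is degenerate. The second branch is false: the negation of the first only says $q_{\pref(r_{k+1})}(r_k)\to 1$, and the contexts $\pref(r_{k+1})$ attached to the successive suffixes of an aperiodic $r$ may be pairwise distinct (for instance when every suffix of $r$ is itself an infinite branch of the tree, as for the tree $\rond T_a$ of the appendix); nothing then prevents $q_{c_k}(r_k)=1-2^{-k}$, say, so the infinite product converges to a positive limit while no individual $q_c$ is degenerate --- non-nullness gives no uniform bound away from $1$. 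The missing idea, which is precisely where the finiteness of $\pi$ must enter (the statement genuinely fails for $\sigma$-finite invariant measures, cf.\ the appendix), is the paper's: for aperiodic $r$ the suffixes $r_n=\sigma^{n-1}(r)$ are pairwise distinct points of $\rond R$, so $\sum_{n\geq 1}\pi(\{r_n\})\leq\pi(\rond R)=1$ forces $\pi(\{r_n\})\to 0$, while the iterated Formula~\eqref{cascade2} gives $\pi(\{r\})\leq\pi(\{r_n\})$ for every $n$, whence $\pi(\{r\})=0$. The ultimately periodic case is then treated separately via the fixed-point relation $\pi(\{t^\infty\})=\rho\,\pi(\{t^\infty\})$ with $\rho<1$, where your observation that each $q_c(\beta)<1$ is exactly what is needed.
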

For a proof of this lemma, see Section~\ref{sec:proofs}, page~\pageref{proof:pineq0}.

\begin{rem}
Thanks to Lemma~\ref{lem:pineq0}(ii), when $\pi$ is a stationary probability measure, both members of Equality~\eqref{cascade2} vanish.
In fact, all formulae in Lemma~\ref{lem:cascade} remain true when $\pi$ is a $\sigma$-finite invariant measure.
In this case, Formula~\eqref{cascade2} may be an equality between two non-zero real numbers.
See Remark~\ref{sigmaFinite} and Section~\ref{sec:appendix} for further comments on $\sigma$-finite invariant measures.
\end{rem}

\vskip 5pt
\newcommand{\fff}{f}
Everything is now in place to state the main theorem.
Denote by $\rond M_1\left(\rond R\right)$ the set of probability measures on $\rond R$.
For a given context tree $\rond T$, define the mapping $\fff$ as follows:
\begin{equation}
\label{def:f}
\begin{array}{cccc}
\fff :&\rond M_1\left(\rond R\right) & \longrightarrow &[0,1]^{\rond S}
\\[3pt]
&\pi & \longmapsto & \Big( \pi\left(\alpha s\rond R\right)\Big) _{\alpha s\in\rond S}.
\end{array}
\end{equation}
\begin{theo}
\label{fQbij}
Let $(\rond T,q)$ be a non-null probabilised context tree and $U$ the associated VLMC.

\vskip 3pt
(i)
Assume that there exists a finite $U$-stationary probability measure $\pi$ on $\rond R$.
Then the cascade series~\eqref{convCasc} converge.
Furthermore, using notation~\eqref{defKappa},
\begin{equation}
\label{masseTotale}
\sum _{\alpha s\in\rond S}\pi\left(\alpha s\rond R\right)
\kappa _{\alpha s}=1.
\end{equation}

\vskip 3pt
(ii)
Assume that the cascade series~\eqref{convCasc} converge.
Then, $f$ induces a bijection between the set of $U$-stationary probability
measures on $\rond R$ and the set of left-fixed vectors $\left( v_{\alpha s}\right)_{\alpha s\in\rond S}$ of $Q$ that have non-negative entries and which satisfy
\begin{equation}
\label{posrec}
\sum _{\alpha s\in\rond S}v_{\alpha s}\kappa _{\alpha s}=1.
\end{equation}
\end{theo}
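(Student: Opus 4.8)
The plan is to exploit the Cascade Formulae (Lemma~\ref{lem:cascade}) together with the $\sigma$-additivity of probability measures to convert the stationarity of $\pi$ into a linear fixed-point equation on $\rond S$, and conversely. For part~(i), I would start from the fact that $\rond R$ is the disjoint union $\bigsqcup_{c\in\rond C^f} c\rond R \,\sqcup\, (\text{right-infinite branches})$, so by Lemma~\ref{lem:pineq0}(ii) we get $\sum_{c\in\rond C^f}\pi(c\rond R)=1$. Then I would partition the finite contexts according to their alpha-LIS: grouping $c=\cdots[\alpha s]$ and applying Lemma~\ref{lem:cascade}(iii), $\pi(c\rond R)=\casc(c)\pi(\alpha s\rond R)$, so that $\sum_{c=\cdots[\alpha s]}\pi(c\rond R)=\pi(\alpha s\rond R)\sum_{c=\cdots[\alpha s]}\casc(c)$. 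Since $\pi(\alpha s\rond R)>0$ by Lemma~\ref{lem:pineq0}(i), the finiteness of the left side (it is at most $1$) forces each cascade series to converge; summing over $\alpha s\in\rond S$ then yields exactly \eqref{masseTotale}. The only subtlety here is making sure the re-indexing of the sum over $\rond C^f$ as a double sum over $\rond S$ and then over $\{c : c=\cdots[\alpha s]\}$ is legitimate, which is fine since all terms are nonnegative.

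For part~(ii), assume the cascade series converge. I would first show that $f$ maps stationary measures into the prescribed set of left-fixed vectors. That $f(\pi)$ has nonnegative entries summing against $\kappa$ to $1$ is \eqref{masseTotale} from part~(i). For the left-fixed property, fix $\beta t\in\rond S$ and compute $\pi(\beta t\rond R)$ by decomposing according to the \emph{next} letter pushed and iterating: writing $\beta t\rond R$ as the disjoint union over finite contexts $c$ with $c=t\cdots$ of the cylinders $\{U : \pref(U)=c,\ U\in\beta t\rond R\}$... more precisely, I would use that $\pi(\beta t\rond R) = \sum_{c\in\rond C^f,\, c=t\cdots}\pi(\beta c\rond R)$ (one letter $\beta$ prepended, then the past is refined to each context extending $t$), and apply Lemma~\ref{lem:cascade}(iii) to $\beta c$: $\pi(\beta c\rond R)=\casc(\beta c)\pi(\alpha_{\beta c}s_{\beta c}\rond R)$, where $\alpha_{\beta c}s_{\beta c}$ is the alpha-LIS of $\beta c$. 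Grouping the contexts $c$ by the alpha-LIS of $\beta c$ reproduces exactly the definition \eqref{defQ} of $Q_{\alpha s,\beta t}$, giving $\pi(\beta t\rond R)=\sum_{\alpha s\in\rond S}\pi(\alpha s\rond R)\,Q_{\alpha s,\beta t}$, i.e.\ $f(\pi)Q=f(\pi)$.

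Conversely, given a nonnegative left-fixed vector $v=(v_{\alpha s})$ of $Q$ with $\sum v_{\alpha s}\kappa_{\alpha s}=1$, I would construct a stationary measure $\pi$ with $f(\pi)=v$ and show uniqueness. The construction: for any finite word $w$ with alpha-LIS $\alpha_w s_w$, necessarily $\alpha_w s_w$ is the alpha-LIS of \emph{some} word but not necessarily of a context — here I would instead first define $\pi$ on cylinders $c\rond R$ for $c\in\rond C^f$ by $\pi(c\rond R):=\casc(c)\,v_{\alpha_c s_c}$, check (using $\sum_{c=\cdots[\alpha s]}\casc(c)=\kappa_{\alpha s}$ and the normalization) that these are nonnegative and sum to $1$ over $\rond C^f$, hence define a probability on the measurable partition by contexts; then extend to all cylinders $w\rond R$ by $\pi(w\rond R):=\sum_{c\in\rond C^f,\,c=w\cdots \text{ or } w=c\cdots}(\dots)$, using $\casc(wc')=\casc(w)\casc(\text{relevant piece})$-type multiplicativity and Remark~\ref{rem:cascRem} ($\casc(w)=\sum_\alpha\casc(\alpha w)$ for non-internal $w$) to verify Kolmogorov consistency. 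Stationarity then follows because \eqref{cascade1} holds by construction, and that $f(\pi)=v$ and the bijectivity is immediate once one checks $\pi(\alpha s\rond R)=v_{\alpha s}$ for $\alpha s\in\rond S$ (take $c$ ranging over contexts extending $\alpha s$, or handle $\alpha s$ itself directly). The main obstacle, and the step I would spend the most care on, is this converse construction: verifying that the proposed values on cylinders are consistent (Carathéodory/Kolmogorov extension on $\rond R$) and genuinely $\sigma$-additive, and that the left-fixed equation $vQ=v$ is precisely what is needed to make the stationarity relation \eqref{cascade1} propagate consistently through internal nodes — this is where the combinatorics of the LIS decomposition and the definition of $Q$ must mesh exactly. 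The forward direction and part~(i) are comparatively routine bookkeeping with nonnegative series.
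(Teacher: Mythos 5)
Your strategy coincides with the paper's proof essentially step for step: part (i) via the partition of $\rond R$ into finite-context cylinders plus (null) infinite contexts, Lemma~\ref{lem:pineq0} and the cascade formula; the forward half of (ii) via the same regrouping of $\pi(\beta t\rond R)=\sum_{c=t\cdots}\pi(\beta c\rond R)$ by the alpha-LIS of $c$ that produces the entries of $Q$; and the converse via exactly the paper's Kolmogorov-extension construction, whose consistency rests on Remark~\ref{rem:cascRem} and on the extension of $v$ from $\rond S$ to all finite words given in Formula~\eqref{prolongementV}. The one point you should not call ``immediate'' is injectivity of $f$ on stationary measures: verifying $\pi(\alpha s\rond R)=v_{\alpha s}$ for the constructed measure only yields surjectivity, and the bijection still requires the separate (though routine) chain of reductions $\pi(w\rond R)=\casc(w)\,\pi(\alpha_w s_w\rond R)$, then $\pi(\alpha s\rond R)=\sum_{c\in\rond C^f,\,c=s\cdots}q_c(\alpha)\pi(c\rond R)$ for internal $s$, then $\pi(c\rond R)=\casc(c)\,\pi(\alpha_c s_c\rond R)$, which shows that a stationary probability measure is entirely determined by its values on the cylinders $\alpha s\rond R$, $\alpha s\in\rond S$.
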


The proof of Theorem~\ref{fQbij} is given in Section~\ref{sec:proofs}, page~\pageref{proof:fQbij}.

\vskip 5pt
This theorem naturally calls for several questions and remarks:
for instance, does everything boil down to $Q$?
Can the theorem be extended to $\sigma$-finite invariant measures?
Can Theorem~\ref{fQbij} be improved for particular context trees?
For finite ones?
What role does the non-nullness assumption play?

\begin{rem}
\label{rem:QfaitPasTout}
One could be tempted to see $f(\pi )$ as an invariant measure for some Markov chain associated with the matrix $Q$, reducing the study of invariant probability measures of a VLMC to the study of stationary probability measures of the Markov chain associated with $Q$.
This is generally not true.

\vskip 5pt
First, even when it is finite-dimensional, $Q$ is generally not stochastic, excluding any hope of interpreting it as the transition matrix of some Markov chain.
Take for instance the small context tree on the alphabet $\rond A=\set{0,1}$ pictured hereunder.
It gets three context alpha-LIS we order the following way:
$00$, $10$ and $1$.
The matrix $Q$ writes straightforwardly as follows.
For instance, its first line's sum equals $1+q_{00}(1)$.

\begin{center}
\begin{tikzpicture}[scale=0.6]
\draw (0,0)--(-1,-1);\draw (0,0)--(1,-1);
\fill (1,-1) circle(0.15);\draw (1,-1.5) node{\small $1$};
\draw (-1,-1)--(-1.8,-2);\draw (-1,-1)--(-0.2,-2);
\fill (-1.8,-2) circle(0.15);\draw (-1.8,-2.5) node{\small $00$};
\draw (-0.2,-2)--(-0.8,-3);\draw (-0.2,-2)--(0.4,-3);
\fill (-0.8,-3) circle(0.15);\draw (-0.85,-3.5) node{\small $010$};
\fill (0.4,-3) circle(0.15);\draw (0.45,-3.5) node{\small $011$};
\draw (12,-2) node{
$Q=\begin{pmatrix}
\casc (000)&\casc (100)&\casc (100)\\[3pt]
\casc (0010)&\casc (1010)&\casc (1010)\\[3pt]
\casc (0011)&\casc (1011)&\casc (1011)+\casc (11)
\end{pmatrix}$
};
\end{tikzpicture}
\end{center}

Second, even when $Q$ is row-stochastic (which is the case when the context tree is \emph{stable}, see Proposition~\ref{pro:stochasticity}), its probabilistic interpretation is not that simple.
In the stable case, $Q$ can be seen as the transition matrix of the underlying Markov chain of some semi-Markov chain, namely the process of the context alpha-LIS of the VLMC.
Section~\ref{sec:SM} is devoted to this fact.
\vskip 5pt
Finally, in general, even in the case of stable VLMC, one cannot reconstruct the VLMC from the process of its alpha-LIS:
both processes are not equivalent, the VLMC being strictly richer than the process of its alpha-LIS.
See Remark~\ref{rem:semiMarkov} for an example and further comments.
\end{rem}

\begin{rem}
Non-nullness appears as some irreducibility assumption on the Markov process on right-infinite words.
One can find in~\cite{cenac/chauvin/paccaut/pouyanne/12} simple examples of not  non-null VLMC's defined on infinite context trees that admit infinitely-many invariant probability measures.
\end{rem}

\begin{rem}
\label{sigmaFinite}
One may wonder whether a non-null VLMC can admit invariant \emph{$\sigma$-finite} measures that have an infinite total mass. The answer is clearly affirmative as can be seen on the \emph{left comb}, which is the context tree shaped as follows, the alphabet being $\rond A=\set{0,1}$:
\begin{minipage}{35pt}
\begin{tikzpicture}[scale=0.3]
\tikzset{every leaf node/.style={draw,circle,fill},every internal node/.style={draw,circle,scale=0.01}}
\Tree [.{} [.{} [.{} [.{}  \edge[line width=2pt,dashed];\node[fill=white,draw=white]{};\edge[draw=white];\node[fill=white,draw=white]{}; {} ] {} ] {} ] {} ]
\end{tikzpicture}
\end{minipage}.
Once this tree has been probabilised by the non-null family $\left( q_{0^n1}\right) _{n\geq 0}$, define $c_n$ as being
\[
c_n:=\casc\left( 0^n1\right)=\prod _{k=0}^nq_{0^k1}(0).
\]
Then, as soon as $c_n$ tends to $0$ when $n$ tends to infinity whereas the series $\sum c_n$ diverges, the corresponding VLMC gets an invariant $\sigma$-finite measure with infinite total mass.
This can be straightforwardly checked
-- however, computation details can be found in~\cite{cenac/chauvin/paccaut/pouyanne/12}.

\vskip 5pt
Moreover, the same argument as in the proof of Lemma~\ref{lem:pineq0}(ii) shows that a $U$-invariant $\sigma$-finite measure always vanishes on \emph{rational} right-infinite words, \emph{i.e.}\@ on eventually periodic words.
One may thus wonder whether a non-null VLMC can admit invariant $\sigma$-finite measures that have an infinite total mass and take a positive value on some irrational infinite word.
The answer is also affirmative.
An example is developed in the appendix, based on a context tree which has irrational contexts and whose $Q$ matrix is (necessarily) transient.
\end{rem}

\section{The stable case}
\label{sec:stable}

In this section, a restriction on the shape of the tree is put, called stability, defined in Section~\ref{subsection:defstable}.
As already said in the introduction, although being very particular, the set of stable trees appears as a very rich class, notably through its links with semi-Markov chains.
These links, detailed in Section~\ref{subsec:compareSM}
(stochasticity and irreducibility of $Q$, construction of the induced semi-Markov chain denoted by $\left( Z_n\right) _{n\geq 0}$), exhibit renewal properties of the VLMC.

The extra structure brought by the stability enables to simplify the statement of Theorem~\ref{fQbij}, turning it into a necessary and sufficient condition for existence and unicity of a stationary probability measure, for countable $\rond S$ (Theorem~\ref{th:stable}) and finite $\rond S$ (Theorem~\ref{cor:finite}, where the convergence of the law of $\left( U_n\right)$ towards the invariant measure is also obtained).

It must be once again emphasized that the trajectories of the VLMC $\left( U_n\right)$ cannot be recovered from the trajectories of the underlying semi-Markov chain $\left( Z_n\right)$ (See Remark \ref{rem:semiMarkov}). Our results on stable VLMC cannot straightforwardly be deduced from those existing in the semi-Markov literature.

\subsection{Definitions}
\label{subsection:defstable}

\begin{pro}
\label{pro:defstable}
Let $\rond T$ be a context tree. The following conditions are equivalent.
\begin{enumerate}
	\item[(i)] $\forall \alpha \in \rond A$, $\forall w \in \rond W$, $\alpha w \in \rond T \Longrightarrow w \in \rond T$.
	In other words, $\sigma(\rond T )\subseteq\rond T$.
	\item[(ii)] If $c$ is a finite context and $\alpha \in \rond A$, then $\alpha c$ is non-internal.
	\item[(iii)] $\rond T\subseteq\rond A\rond T$, where $\rond A\rond T=\{\alpha w,~\alpha\in\rond A,~w\in\rond T\}$.
	\item[(iv)] For any VLMC $\left( U_n\right) _n$ associated with $\rond T$, the process $\left( C_n\right) _{n\in\g N}:=\left(\pref\left( U_n\right)\right)_{n\in\g N}$ is a Markov chain with state space $\rond C$.
\end{enumerate} 
\end{pro}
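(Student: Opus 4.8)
The plan is to prove the equivalence of the four conditions in Proposition~\ref{pro:defstable} by establishing a cycle of implications, treating (i), (ii), (iii) as purely combinatorial statements about the tree and then connecting (iv) to (ii) via the definition of \pref.

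First I would handle the easy combinatorial equivalences. The equivalence (i)$\Leftrightarrow$(iii) is essentially a restatement: saying $\sigma(\rond T)\subseteq\rond T$ means that whenever $\alpha w\in\rond T$ then $w=\sigma(\alpha w)\in\rond T$, which is exactly (i); and this says precisely that every element of $\rond T$ (namely $\alpha w$) is obtained by prepending a letter to an element of $\rond T$ (namely $w$), i.e. $\rond T\subseteq\rond A\rond T$, which is (iii). One subtlety to check: the root $\emptyset$ of $\rond T$ is not of the form $\alpha w$, so (iii) should be read as applying to non-empty nodes, or one notes $\emptyset\in\rond A\rond T$ is vacuously irrelevant because (i) only constrains words of the form $\alpha w$; I would add a one-line remark clarifying this so the statement $\rond T\subseteq\rond A\rond T$ is not misread.

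Next, (i)$\Rightarrow$(ii): let $c$ be a finite context and $\alpha\in\rond A$, and suppose for contradiction that $\alpha c$ is internal, i.e. a strictly internal node of $\rond T$; in particular $\alpha c\in\rond T$. By (i), $c=\sigma(\alpha c)\in\rond T$. But more is needed: being strictly internal, $\alpha c$ has $\#\rond A$ children in $\rond T$, so $\alpha c\beta\in\rond T$ for every $\beta\in\rond A$; applying (i) again (or iterating), $c\beta\in\rond T$ for every $\beta$, hence $c$ is strictly internal, contradicting that $c$ is a finite context (a leaf). Conversely (ii)$\Rightarrow$(i): suppose $\alpha w\in\rond T$ but, for contradiction, $w\notin\rond T$. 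Walk down the branch of $\rond T$ spelled by $w$ until it exits the tree: there is a prefix $w'$ of $w$ such that $w'$ is a finite context of $\rond T$ and $w'$ is a strict prefix of $w$ (this exit happens because $w\notin\rond T$, while $\emptyset\in\rond T$). Then $\alpha w'$ is a prefix of $\alpha w\in\rond T$, so $\alpha w'\in\rond T$, and since $w'$ is a leaf, $\alpha w'$ has no proper extension that is a strict prefix of $\alpha w$ forced to be internal — more carefully, $\alpha w'$ is a strict prefix of $\alpha w\in\rond T$, hence $\alpha w'$ is a non-external node of $\rond T$ that is not a leaf (it has the descendant $\alpha w$), so $\alpha w'$ is strictly internal, i.e. internal; this contradicts (ii) applied to the finite context $w'$ and the letter $\alpha$. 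I expect this direction to be the place where I must be most careful about the leaf/internal/external trichotomy and about the saturation (each node has $0$ or $\#\rond A$ children).

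Finally I would close the loop with (ii)$\Leftrightarrow$(iv). Recall $C_n=\pref(U_n)$ and $U_{n+1}=X_{n+1}U_n$, so once we know $C_n=c$ with $c$ a context, the next letter $X_{n+1}=\alpha$ is drawn with probability $q_c(\alpha)$, and then $C_{n+1}=\pref(\alpha U_n)=\pref(\alpha c\,\cdots)$. For $(C_n)$ to be a Markov chain on $\rond C$ — indeed for $C_{n+1}$ to even be well-defined as a function of $C_n$ and $X_{n+1}$ alone — one needs $\pref(\alpha c\,\cdots)$ to depend only on $\alpha c$ and not on the rest of $U_n$; this holds exactly when $\alpha c$ is non-internal, because then $\pref(\alpha c\cdots)$ is the unique context prefix of $\alpha c$ itself (Definition of \pref), independent of the tail. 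So (ii) gives (iv): the transition is $\Proba(C_{n+1}=c'\mid C_n=c)=\sum_{\alpha:\,\pref(\alpha c)=c'}q_c(\alpha)$, a genuine Markov kernel on $\rond C$. For (iv)$\Rightarrow$(ii), I would argue contrapositively: if some $\alpha c$ is internal for a finite context $c$, pick two distinct right-infinite words $r,r'$ extending $\alpha c$ that exit $\rond T$ through different contexts (possible since $\alpha c$ is strictly internal, so the subtree below it is non-trivial, hence has at least two distinct context prefixes, using that the alphabet has at least two letters); choosing the initial law of $U_0$ appropriately — e.g. $U_0=c$ reached from two different histories, or more simply by taking the VLMC started so that $C_n=c$ but $U_n$ equals different tails — one sees $C_{n+1}$ is not determined by $(C_n,X_{n+1})$, and in fact the conditional law of $C_{n+1}$ given $C_0,\dots,C_n$ genuinely depends on more than $C_n$, so $(C_n)$ fails to be a Markov chain on $\rond C$. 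The main obstacle is making this last failure-of-Markovianity argument fully rigorous: I need to exhibit, from non-nullness or just from positivity along a branch, an explicit initial distribution of $U_0$ under which the process $(C_n)$ visits the state $c$ with two different ``internal continuations'' having different distributions for the future, so that no transition kernel on $\rond C$ can reproduce the dynamics. I would lean on the fact, used throughout the paper, that $U_n$ carries strictly more information than $C_n$, and on the explicit tree structure below the internal node $\alpha c$.
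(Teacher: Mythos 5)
Your proof is correct and follows essentially the same route as the paper: (i)$\Leftrightarrow$(iii) as a restatement, (i)$\Leftrightarrow$(ii) via saturation and the leaf/internal trichotomy, (ii)$\Rightarrow$(iv) through the identity $\pref(\alpha s)=\pref\left(\alpha\,\pref(s)\right)$, and (iv)$\Rightarrow$(ii) by contraposition using an internal $\alpha c$. The only step you flag as incomplete, the rigorous failure of Markovianity in (iv)$\Rightarrow$(ii), is treated at exactly the same informal level in the paper itself, so there is no real gap relative to the published argument.
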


A proof of this Proposition~\ref{pro:defstable} can be found in Section~\ref{sec:proofsStable}, page~\pageref{proof:defstable}.

\begin{defi}[shift-stable tree, stable VLMC]
\label{def:stable}
A context tree is \emph{shift-stable}\footnote{This property of trees is also called \emph{$0$-subperiodic} by some authors, like \cite{Lyons90, LPbook} or \emph{shift-invariant} by \cite{Furstenberg}.},
shortened in the sequel as \emph{stable} when one of the four equivalent  conditions of Proposition~\ref{pro:defstable} is satisfied.
A VLMC is also called \emph{stable} when it is defined by a probabilised stable context tree.
\end{defi}


\vskip 5pt
The following two lemmas, which do not hold for general trees, will be used to get an accurate description of the structure of the context alpha-LIS process, as developed in Section~\ref{subsec:compareSM}.

\begin{lem}
\label{lem:contextStableTree}
Let $\rond T$ be a stable context tree.

(i) Any context alpha-LIS is a context.
In otherwords, $\rond S\subseteq\rond C$.

(ii) Assume that $c$ is a finite context having $\alpha s$ as an alpha-LIS.
Then all $\sigma ^k(c)$, $0\leq k\leq |c|-|\alpha s|$ are also contexts having $\alpha s$ as an alpha-LIS.
\end{lem}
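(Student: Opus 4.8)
My plan is to treat (i) and (ii) together, since the key combinatorial fact is really about suffixes of contexts in a stable tree. The crux is the following observation: in a stable tree, if $w$ is a non-internal word then every word having $w$ as a suffix is also non-internal, and dually, if $w$ is internal then every suffix of $w$ is internal (this is just the contrapositive of condition (i) of Proposition~\ref{pro:defstable}, read as ``$\sigma$ preserves $\rond T$'', hence preserves the complement of $\rond T$ among... wait, one must be careful: $\sigma$ preserves $\rond T$, so it maps non-external words... let me be precise). What I actually need: condition (i) says $\alpha w\in\rond T\Rightarrow w\in\rond T$, i.e. if $w\notin\rond T$ (external) then $\alpha w\notin\rond T$ (external). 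Equivalently, a suffix of a non-external word is non-external, and a prefix... no. The clean statement: \emph{every suffix of an internal or context word obtained by the LIS-stripping process stays inside $\rond T$ until it becomes internal.}

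\textbf{Step 1 (part (i)).} Let $c\in\rond C^f$ with alpha-LIS $\alpha s$, so $c=\beta_1\cdots\beta_{p_c}\alpha s$ with $\alpha s$ non-internal. Apply the shift $p_c$ times: $\sigma^{p_c}(c)=\alpha s$. Since $c\in\rond T$ and $\rond T$ is stable, $\sigma^{k}(c)\in\rond T$ for every $0\le k\le |c|$; in particular $\alpha s=\sigma^{p_c}(c)\in\rond T$. So $\alpha s$ is a node of $\rond T$ that is non-internal, hence $\alpha s$ is a context. (Here I use that ``non-internal'' $=$ ``external or context'', and $\alpha s\in\rond T$ rules out external.) This gives $\rond S\subseteq\rond C$. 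I should also note $\alpha s$ is a \emph{finite} context when $s$ is finite, which it is since $s$ is a suffix of the finite word $c$; so in fact $\rond S\subseteq\rond C^f$, though the statement only claims $\rond S\subseteq\rond C$.

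\textbf{Step 2 (part (ii)).} Fix $0\le k\le |c|-|\alpha s|$ and set $c'=\sigma^k(c)=\beta_{k+1}\cdots\beta_{p_c}\alpha s$. By stability $c'\in\rond T$. I must show $c'$ is a \emph{context}, i.e. $c'$ is non-internal (then being a node of $\rond T$ forces it to be a leaf or infinite branch — here a finite leaf), and that its alpha-LIS is exactly $\alpha s$. For non-internality: $\alpha s$ is a suffix of $c'$ and $\alpha s$ is non-internal, so if $c'$ were internal then by the suffix-stability of internality (contrapositive of Proposition~\ref{pro:defstable}(i): $\alpha w\in\rond T$ with the stronger fact that internal words strip to internal words — more carefully, if $c'\in\rond I$ then every suffix of $c'$ lies in $\rond T$, and in fact is internal, because an internal node has all $\#\rond A$ children and... ) I need the precise implication ``$c'$ internal $\Rightarrow$ $\alpha s$ internal,'' which is the contrapositive form of stability applied repeatedly; let me phrase it as: stripping letters from the left of an internal word yields internal words, which follows because $\rond T$ saturated plus $\sigma(\rond T)\subseteq\rond T$ forces $\sigma(\rond I)\subseteq\rond I$. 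That contradicts $\alpha s$ non-internal. Hence $c'$ is non-internal, so $c'\in\rond C^f$. For the alpha-LIS: the LIS of $c'$ is the longest internal strict suffix; all strict suffixes of $c'$ that are longer than $s$ are also strict suffixes of $c$ longer than $s$, hence non-internal (as $s$ is the LIS of $c$); and $s$ itself is internal. So the LIS of $c'$ is $s$ and the alpha-LIS is the letter preceding $s$ in $c'$, which is the same letter $\alpha$ preceding $s$ in $c$. Thus $c'=\cdots[\alpha s]$.

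\textbf{Main obstacle.} The only delicate point is nailing down the auxiliary claim ``$\sigma(\rond I)\subseteq\rond I$'' (equivalently: a left-suffix of an internal word is internal) and ``$\sigma$ of a non-internal word is non-internal'' — these are where stability and saturation interact, and one must be careful that ``internal'' means \emph{strictly} internal (has children) rather than merely ``in $\rond T$''. I expect this to be a one-line consequence of Proposition~\ref{pro:defstable}(ii) (if $c$ is a finite context then $\alpha c$ is non-internal): indeed, take any internal $w$; since $w$ is strictly internal it has a descendant context $d$ with $w$ a prefix of $d$; then... hmm, this shows prefixes are fine, not suffixes. For suffixes I'll argue directly: if $w=\alpha w'$ is internal, pick a context $d$ below $w$, so $d=w r=\alpha w' r$; then $\sigma(d)=w'r\in\rond T$ by stability and $w'$ is a prefix of $w'r\in\rond T$, so $w'\in\rond T$; moreover $w'$ cannot be a context, for if $w'$ were a (finite) context then by Proposition~\ref{pro:defstable}(ii) $\alpha w'=w$ would be non-internal, contradiction; and $w'$ cannot be external since $w'\in\rond T$; so $w'$ is internal. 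Iterating gives every left-suffix of an internal word is internal, which is exactly what Steps 1–2 need.
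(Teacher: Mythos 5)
Your proof is correct, and its skeleton is the paper's: stability gives $\sigma^k(c)\in\rond T$, so each shift is internal or a context, and one then rules out internality; the identification of the LIS of $\sigma^k(c)$ via maximality of $s$ is also exactly the paper's (implicit) argument. The one place you genuinely diverge is in how you rule out internality of $c'=\sigma^k(c)$: you establish and iterate the auxiliary fact that $\sigma$ maps internal words to internal words, whereas the paper simply invokes the maximality of $s$ once more --- for $1\le k\le |c|-|\alpha s|$, $\sigma^k(c)$ is a strict suffix of $c$ of length at least $|\alpha s|>|s|$, hence non-internal by the very definition of the LIS (and for $k=0$ it is the context $c$ itself). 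Since you already deploy precisely this maximality argument to show that the LIS of $c'$ is $s$, your detour through $\sigma(\rond I)\subseteq\rond I$ is redundant, though the fact is true and your proof of it via Proposition~\ref{pro:defstable}(ii) is sound; the only loose end there is that the context $d$ below $w$ may be an infinite branch, for which ``$\sigma(d)\in\rond T$ by stability'' needs a prefix-by-prefix word (or, more simply, take $d$ to be a child $w\beta\in\rond T$ of the internal node $w$, which exists by saturation). Both routes are valid; the paper's is shorter because the defining maximality of the LIS does double duty.
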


\begin{proof}
Let $\alpha s\in\rond S$ and let $c=\cdots [\alpha s]\in\rond C^f$ (notation~\eqref{notationsPrefLis}).
Since $\rond T$ is stable, for any $k\in\g N$, the node $\sigma ^k(c)$ is either internal or a context.
By maximality of $s$, this implies that the $\sigma ^k(c)$, for $0\leq k\leq |c|-|\alpha s|$, have $\alpha s$ as a suffix and are noninternal, thus contexts.
This proves (ii), thus (i).
\end{proof}

\begin{lem}
\label{lem:fondLem}
Let $\rond T$ be a stable context tree and $c\in\rond C$. Let $\rond A_c:=\{\alpha\in\rond A, \alpha c\notin~\rond C\}$. Then, 
\begin{enumerate}
\item if $\rond A_c=\emptyset$, then $c$ does not admit any context LIS as a prefix;
\item
for every $\alpha\in\rond A_c$, there exists a unique context LIS $t_{\alpha}$ such that

(i) $c=t_{\alpha}\cdots$

(ii) $\alpha t_{\alpha}\in\rond C$.

Furthermore, for every $\beta\notin\rond A_c$, $\beta t_{\alpha}\notin\rond C$.
\end{enumerate}
\end{lem}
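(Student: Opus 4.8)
The plan is to exploit repeatedly the stability property in the form of Proposition~\ref{pro:defstable}(i)--(ii), together with the basic combinatorics of LIS developed in Section~\ref{subsec:cascade} and the structural Lemma~\ref{lem:contextStableTree}. Recall first that, since $\rond T$ is stable, for any letter $\alpha$ and any context $c$, the word $\alpha c$ is non-internal (Proposition~\ref{pro:defstable}(ii)); hence $\alpha c$ is either a context or strictly external. So the set $\rond A_c$ of letters $\alpha$ for which $\alpha c\notin\rond C$ is exactly the set of letters $\alpha$ for which $\alpha c$ is strictly external, i.e.\ for which $\pref(\alpha c)$ is a strict prefix of $\alpha c$. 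For such an $\alpha$, writing $\pref(\alpha c)=\alpha c'$ with $c'$ a strict prefix of $c$ forces $c'$ to be internal (a strict prefix of a context is internal), and in fact $\alpha c'$ to be a context; conversely, if $\alpha\notin\rond A_c$ then $\alpha c$ is itself the context $\pref(\alpha c)$.

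\textbf{Statement (1).} Suppose $\rond A_c=\emptyset$, so $\alpha c\in\rond C$ for every $\alpha\in\rond A$. I claim $c$ has no context LIS as a prefix. Indeed, suppose for contradiction that $c=t\cdots$ with $t$ a context LIS, say $t=s_{c_0}$ is the LIS of some finite context $c_0=\beta_1\dots\beta_p\alpha_{c_0}t$. Since $t$ is a context, $t$ is non-internal; but a LIS is by definition an internal word (it is the \emph{longest internal} suffix), so $t$ is internal. An internal word which is also a context must be an infinite context, so $t$ is an infinite branch — but then $t$ cannot be a (finite) prefix of $c$ unless $c=t$ is itself that infinite branch, and an infinite branch has no proper prefix among contexts either; either way we contradict $\rond A_c=\emptyset$, because if $c$ is infinite then "$\alpha c$" is not even the right notion, and if $t$ is finite and internal-and-a-context that is impossible. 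Thus no context LIS is a prefix of $c$. (I would phrase this more carefully: the real content is that a finite context LIS is always \emph{strictly internal}, hence never equal to nor a prefix-context of any node; so "$t$ is a prefix of $c$" and "$t\in\rond C$" are incompatible once $\rond A_c=\emptyset$ rules out the degenerate reading.)

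\textbf{Statement (2).} Fix $\alpha\in\rond A_c$, so $\alpha c$ is strictly external and $\pref(\alpha c)=\alpha t_\alpha$ with $t_\alpha$ a strict prefix of $c$; set $t_\alpha$ to be this word. First, $\alpha t_\alpha\in\rond C$ by construction, giving (ii). For (i), $t_\alpha$ is a prefix of $c$ by construction. It remains to see $t_\alpha$ is a \emph{context LIS} and that it is \emph{unique}, and to prove the final "furthermore" clause. For "$t_\alpha$ is a context LIS": $\alpha t_\alpha$ is a context, and by stability every word of the form (letter)$\cdot$(context) is non-internal while $t_\alpha$, being a strict prefix of the context $c$, is internal; the maximality built into the definition of alpha-LIS then identifies $\alpha t_\alpha$ with the alpha-LIS of the context $\alpha t_\alpha$ — more precisely $\alpha t_\alpha\in\rond S$ and $t_\alpha$ is its LIS, so $t_\alpha$ is a context LIS. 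For uniqueness: if $t'$ also satisfies (i) and (ii) with the \emph{same} $\alpha$, then $\alpha t'$ and $\alpha t_\alpha$ are both contexts and both prefixes of $\alpha c$; two distinct contexts cannot be prefixes of the same word (one would be a strict prefix of the other, hence internal, contradiction), so $t'=t_\alpha$. Finally, for $\beta\notin\rond A_c$: then $\beta c\in\rond C$, and I must show $\beta t_\alpha\notin\rond C$. Suppose $\beta t_\alpha\in\rond C$; since $t_\alpha$ is a strict prefix of $c$, the word $\beta t_\alpha$ is a strict prefix of $\beta c$, so $\beta t_\alpha$ is a context that is a strict prefix of the context $\beta c$ — again two nested contexts, impossible.

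\textbf{Main obstacle.} The only genuinely delicate point is the bookkeeping around degenerate/infinite cases in part (1): one must be sure that "context LIS" always means a \emph{finite, strictly internal} word, so that the phrases "$t\in\rond C$" and "$t$ is a prefix of a context $c$" can only coexist trivially, which then contradicts $\rond A_c=\emptyset$. Everything else is a direct unwinding of Proposition~\ref{pro:defstable}(ii) plus the elementary fact, used three times, that two contexts of $\rond T$ cannot be prefixes of one another.
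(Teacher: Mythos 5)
Your treatment of part (2) is correct and follows essentially the same route as the paper: define $t_\alpha$ by $\alpha t_\alpha=\pref(\alpha c)$, observe that $t_\alpha$ is internal and hence the LIS of the context $\alpha t_\alpha$, deduce uniqueness from the fact that $\pref(\alpha c)$ is the only context prefix of $\alpha c$, and obtain the final clause from the fact that two distinct contexts cannot be prefixes of one another.

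Part (1), however, contains a genuine gap. You write ``Since $t$ is a context, $t$ is non-internal'' and then derive a contradiction with the fact that a LIS is internal. But nothing gives you that $t$ is a context: a context LIS is the LIS \emph{of} a context, hence an internal word, and there is no reason for it to belong to $\rond C$. You are refuting a premise you introduced yourself, so the argument establishes nothing; symptomatically, the hypothesis $\rond A_c=\emptyset$ is never genuinely used, and your argument, were it valid, would show that no context ever admits a context LIS as a prefix --- which is contradicted by part (2) itself, where $t_\alpha$ is a context LIS and a prefix of $c$. The missing idea is to pass from $t$ to $\alpha t$: since $t$ is a context LIS, there is a letter $\alpha$ such that $\alpha t$ is a context alpha-LIS, and Lemma~\ref{lem:contextStableTree} (this is precisely where stability enters) guarantees that $\alpha t$ is a \emph{context}. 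As $t$ is internal, it is a strict prefix of $c$, so $\alpha t$ is a context which is a strict prefix of $\alpha c$; two distinct contexts cannot be prefixes of one another, hence $\alpha c\notin\rond C$ and $\alpha\in\rond A_c$. This proves the contrapositive of (1), and it is how the paper argues.
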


The proof of this lemma is given in Section~\ref{sec:proofs}, page~\pageref{proof:fondLem}.

\vskip 5pt
Note in passing the following formula, proven during the proof of Proposition~\ref{pro:defstable} and valid in the case of stable context trees:
if $s\in\rond R$ is a right-infinite word and if $\alpha\in\rond A$ is any letter, then 
\[
\pref(\alpha s) = \pref\left(\alpha\pref(s) \right).
\]
This formula is the foundation for the renewal properties of stable VLMC's, as described hereunder.
For any $n\geq 0$ and for any letter~$\beta$, because of this formula, $\pref\left( \beta U_n\right)$ depends on $U_n$ only through its \emph{cont}.
More precisely, if $C_n$ denotes $\pref\left( U_n\right)$, then $\pref\left( \beta U_n\right)=\pref\left( \beta C_n\right)$.
Furthermore, thanks to Lemma~\ref{lem:fondLem}, if $c$ is any finite context having $\alpha s$ as an alpha-LIS and if $\beta$ is any letter, two disjoint cases may occur:
either $\beta c$ is a context which has again $\alpha s$ as an alpha-LIS, or $\beta c$ is an external word, $\pref (\beta c)$ being its own alpha-LIS.
This fact contains in germ the announced renewal property of a stable VLMC, as completely formalized in Proposition~\ref{pro:compareSM}, the context alpha-LIS's constituting renewal patterns of a stable VLMC:
once $U_n$ has begun by a context alpha-LIS, the process will never make use of letters in the past beyond this alpha-LIS.

\begin{rem}
\label{rem:pasRenouveau}
A general (non-stable) VLMC does not enjoy such a renewal phenomenon.

\begin{center}
\begin{minipage}{0.3\textwidth}
\begin{tikzpicture}
\draw (0,0) node{
\newcommand{\diam}{6 pt}
\newcommand{\pente}{1.3}
\newcommand{\queue}[2]{
\fill (#1*\pente,#2) circle(\diam);
\fill (#1*\pente-\pente,#2-1) circle(\diam);\fill (#1*\pente+\pente,#2-1) circle(\diam);
\draw (#1*\pente-\pente,#2-1)--(#1*\pente,#2)--(#1*\pente+\pente,#2-1);
}
\newcommand{\queueFin}[2]{
\fill (#1*\pente,#2) circle(\diam);
\draw [densely dashed] (#1*\pente,#2)--++(-0.8*\pente,-0.8);
\draw [densely dashed] (#1*\pente,#2)--++(0.8*\pente,-0.8);
}

\begin{tikzpicture}[scale=0.25]
\queue 00
\queue{-1}{-1}
\queue{0}{-2}
\queue{-1}{-3}
\queue{-2}{-4}
\queue{-3}{-5}
\queue{-2}{-6}
\queue{-1}{-7}
\queue{-2}{-8}
\queue{-1}{-9}
\queue{0}{-10}
\queue{-1}{-11}
\queue{-2}{-12}
\queue{-3}{-13}
\queue{-4}{-14}
\queue{-5}{-15}
\queue{-4}{-16}
\queue{-5}{-17}
\queue{-4}{-18}
\queue{-5}{-19}
\queue{-6}{-20}
\queue{-5}{-21}
\queue{-4}{-22}
\queueFin{-3}{-23}
\end{tikzpicture}
};
\draw (0,-3.4) node{The filament of all words};
\end{tikzpicture}
\end{minipage}
\begin{minipage}{0.65\textwidth}
Consider for instance the context tree built as follows on the alphabet $\rond A=\set{0,1}$.
Take the right-infinite word $u=0100011011000001\cdots$ obtained by concatenating all finite words ordered by increasing length and alphabetical order: $0$, $1$, $00$, $01$, $10$, $11$, $000$, etc.
Let $\rond T_u$ be the context tree spanned by $u$
-- namely the smallest context tree that contains $u$ as infinite branch.
We name $\rond T_u$ the \emph{filament of all words}.
Let also $U$ be a non-null VLMC obtained by probabilising $\rond T_u$.
Relatively to this tree, any finite word is the suffix of some internal node.
Let thus $w$ be an arbitrary finite prefix of $U_0$, and $p$ be a finite word such that $pw$ is internal.
With positive probability, $U_{|p|}=pw\cdots$ so that $\pref\left( U_{|p|}\right)$ has $pw$ as a strict prefix:
the transition from $U_{|p|}$ to $U_{|p|+1}$ depends on a prefix of $U_0$ strictly longer than~$w$.
Consequently, no finite prefix of $U_0$ can play the role of a renewal pattern for the random process~$U$.
\end{minipage}
\end{center}

Remark that this situation is generic in the following sense:
a right-infinite word $r$ on $\set{0,1}$ drawn uniformly at random has the following property.
For any finite word $w\in\rond W$, almost surely, $w$ is a pattern of $r$.
Thus, the phenomenon just described for $\rond T_u$ holds for any context tree having this infinite word $r$ as an infinite branch.
\end{rem}

\vskip 5pt
Let $\left( U_n\right) _n$ be a stable VLMC.
For every $n$, let $C_n=\pref\left( U_n\right)$.
As seen in Proposition~\ref{pro:defstable}, the process $\left( C_n\right) _n$ is a Markov chain.
In addition, $\rond C^f$ is an absorbing set for the chain $\left( C_n\right)_n$
-- as soon as a finite context is seen, all the following contexts will be finite.
This is a consequence of the renewal property described above.
Therefore, the chain induced by $\left(C_n\right)_n$ on the absorbing set $\rond C^f$ is again a Markov chain that enjoys the following properties.

\begin{lem}\label{lem:context_markov}
Let $U=\left( U_n\right) _n$ be a non-null stable VLMC.
For any $n$, let $C_n=\pref\left( U_n\right)$.
Then, the Markov chain induced by $\left(C_n\right)$ on $\rond C^f$
is irreducible and aperiodic.
\end{lem}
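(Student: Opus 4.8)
The plan is to argue directly on the transition kernel of the chain induced on $\rond C^f$. First note the statement is not vacuous: since $\rond T$ is not reduced to its root, it has a leaf --- otherwise every node would have $\#\rond A\ge 2$ children and $\rond T$ would be the full $\#\rond A$-ary tree, which has uncountably many infinite branches --- so $\rond C^f\neq\emptyset$. As explained in the text preceding the lemma, $\rond C^f$ is absorbing for $(C_n)$, and the induced chain is the Markov chain on $\rond C^f$ with transitions $P(c,c')=\sum_{\beta\in\rond A,\ \pref(\beta c)=c'}q_c(\beta)$, obtained from the stable-tree identity $\pref(\beta U_n)=\pref(\beta C_n)=\pref(\beta c)$; moreover, because $\rond T$ is stable, $\beta c$ is non-internal for every $\beta$ (Proposition~\ref{pro:defstable}(ii)), so $\pref(\beta c)$ is itself a finite context and each letter $\beta$ produces a bona fide transition inside $\rond C^f$.

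The heart of the argument is an elementary ``typing'' observation. Fix a target $c'=c'_1\cdots c'_m\in\rond C^f$. Starting from \emph{any} state and appending the letters $c'_m,c'_{m-1},\dots,c'_1$ in this order --- each step of positive probability, by non-nullness --- turns the current right-infinite word $u$ into $c'_1\cdots c'_m\,u$, whose \emph{cont} is exactly $c'$: indeed $c'$ is a context and a prefix of this word, its strict prefixes are internal (being strict ancestors of the leaf $c'$), and any strictly longer prefix already leaves $\rond T$ (as $c'$ is a leaf), so $c'$ is the unique context-prefix. Hence $P^{\,|c'|}(c,c')>0$ for every $c\in\rond C^f$, and the induced chain is irreducible.

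For aperiodicity I would exhibit, at a single fixed state $c^\ast\in\rond C^f$, two return loops whose lengths are coprime. Writing $\ell=|c^\ast|$, typing $c^\ast$ in reverse gives $P^{\,\ell}(c^\ast,c^\ast)>0$. On the other hand, from $c^\ast$ first append one arbitrary letter $\beta$ (positive probability), reaching the finite context $\pref(\beta c^\ast)$, then type $c^\ast$ in reverse; this loop has length $\ell+1$, so $P^{\,\ell+1}(c^\ast,c^\ast)>0$. Since $\gcd(\ell,\ell+1)=1$, the period of $c^\ast$ is $1$, and by irreducibility the whole induced chain is aperiodic. (Equivalently, inserting $j$ arbitrary wandering steps before the final reverse-typing block shows $P^{\,\ell+j}(c^\ast,c^\ast)>0$ for all $j\ge 0$.)

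The one point that genuinely requires care --- the ``main obstacle'', slight as it is --- is the claim that feeding in the letters of a finite context lands the walk \emph{exactly} at that context rather than overshooting it or stopping at a shorter context. This is precisely where the facts that finite contexts are leaves and that $\beta c$ is non-internal in a stable tree get used; everything else reduces to invoking non-nullness so that any prescribed finite string of appended letters occurs with strictly positive probability, and to noting that all intermediate states stay in the absorbing set $\rond C^f$, so that the paths above are genuine paths of the induced chain.
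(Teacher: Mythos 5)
Your proof is correct and follows essentially the same route as the paper's: irreducibility by spelling out the letters of the target context one by one (non-nullness making each step positive), and aperiodicity by combining return loops of lengths $\ell$ and $\ell+1$ at a fixed state. The only difference is cosmetic: where the paper tracks the intermediate conts inductively (using shift-stability to show they retain the already-typed suffix) before concluding that the final cont is $c'$, you reach the same conclusion directly by computing $\pref(c'u)=c'$ from the fact that strict prefixes of a finite context are internal, which is a slight streamlining rather than a different argument.
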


The proof of this lemma is made in Section~\ref{sec:proofs}, page~\pageref{proof:context_markov}.

\vskip 5pt
In view of this lemma, it would be tempting to try to study the recurrence properties of this Markov chain $\left( C_n\right) _n$ and then to apply the classical results on countable Markov chains to get a stationary probability measure for the VLMC itself. First, it appears that these recurrence properties are not at all obvious. 
Moreover, this would mean ignoring the crucial renewal properties of the alpha-LIS process, which are highlighted in Section \ref{subsec:compareSM}.
That is why it is more fruitful to work with the matrix~$Q$
-- in general a smaller matrix than the transition matrix of $\left( C_n\right) _n$.
Nevertheless, the irreducibility and aperiodicity of $\left( C_n\right) _n$ will help proving the convergence of the law of $(U_n)_n$ towards the invariant measure of the VLMC, in the case of finitely many alpha-LIS (see Theorem~\ref{cor:finite}).

\begin{defi}[stabilizable tree, stabilized of a tree]
A context tree $\rond T$ is \emph{stabilizable} whenever the stable tree $\displaystyle\bigcup_{n\in\g N}\sigma^n\left(\rond T\right)$ has at most countably many infinite branches, \emph{i.e.} when the latter is again a context tree.
When this occurs, $\displaystyle\bigcup_{n\in\g N}\sigma^n\left(\rond T\right)$ is called the \emph{stabilized} of $\rond T$;
it is the smallest stable context tree containing~$\rond T$.
\end{defi}

For example, the left-comb
\begin{minipage}{35pt}
\begin{tikzpicture}[scale=0.3]
\tikzset{every leaf node/.style={draw,circle,fill},every internal node/.style={draw,circle,scale=0.01}}
\Tree [.{}
	[.{}
		[.{}
			[.{}
				[.{}  \edge[line width=2pt,dashed];\node[fill=white,draw=white]{};\edge[draw=white];\node[fill=white,draw=white]{}; {} ]
		{} ]
			{} ]
				{} ]
					{} ]
\end{tikzpicture}
\end{minipage}
is stable.
On the contrary, the bamboo blossom
\begin{minipage}{45pt}
\begin{tikzpicture}[scale=0.3]
\tikzset{every leaf node/.style={draw,circle,fill},every internal node/.style={draw,circle,scale=0.01}}
\Tree [.{}
	[.{} {} 
		[.{} [.{} {} [.{} [.{} {} [.{} \edge[line width=2pt,dashed];\node[fill=white,draw=white]{};\edge[draw=white];\node[fill=white,draw=white]{}; {} ] ] {} ] ] {} ] ]
	{}
      ]
\end{tikzpicture}
\end{minipage}
is non-stable; it is stabilizable, its stabilized being the double bamboo
\begin{minipage}{80pt}
\begin{tikzpicture}[scale=0.3]
\label{tikz:doublebamboo}
\tikzset{every leaf node/.style={draw,circle,fill},every internal node/.style={draw,circle,scale=0.01}}
\Tree [.{}
	[.{} {} 
		[.{} [.{} {} [.{} [.{} {} [.{} \edge[line width=2pt,dashed];\node[fill=white,draw=white]{};\edge[draw=white];\node[fill=white,draw=white]{}; {} ] ] {} ] ] {} ] ] 
    \edge[draw=white];\node[fill=white,draw=white,scale=4]{}; 
		[.{} [.{} {} [.{} [.{} {} [.{} [.{} {} \edge[draw=white];\node[fill=white,draw=white]{};\edge[line width=2pt,dashed];\node[fill=white,draw=white]{};] {} ] ] {} ] ] {} ] ]
\end{tikzpicture}
\end{minipage}.

\vskip 10pt
\begin{rem}
A context tree is not necessarily stabilizable as the following examples, built on the alphabet $\set{0,1}$, show.

\vskip 10pt
\begin{minipage}{0.56\linewidth}
\begin{center}
\begin{tikzpicture}[scale=0.3]
\tikzset{every leaf node/.style={draw,circle,fill},every internal node/.style={draw,circle,scale=0.01}}
\Tree [.{} 
		[.{} {}
		[.{} 
			[.{}
				[.{} {}
					[.{} {}
						[.{}
							[.{}
								[.{}
									[.{} {} 
										[.{} {} 
											[.{} {}
												[.{} 
													[.{} \edge[line width=2pt,dashed];\node[fill=white,draw=white]{};\edge[draw=white];\node[fill=white,draw=white]{}; ] {} 
												]
											]
										]
									]
									{} ]
								{} ]
							{} ]
					]
				]
				{} ]
			{} ]
		]
	{} ]
      
\end{tikzpicture}\\
\end{center}
This context tree consists in saturating the infinite word $010^21^2\dots 0^k1^k\cdots$ by adding hairs.
This filament tree is stabilizable, its stabilized being the context tree having the
$\{0^\ell 1^k0^{k+1}1^{k+1}\cdots\}$ and the $\{1^\ell0^k1^{k+1}0^{k+1}\cdots\}$, $k\geq 1, 0\leq \ell\leq k-1$ as internal nodes.
Its countably many infinite branches are the $0^k1^{\infty}$ and the $1^k0^{\infty}$, $k\geq 0$.
\end{minipage}
\hfill
\begin{minipage}{0.4\linewidth}
\begin{center}
\begin{tikzpicture}[scale=0.3]
\tikzset{every leaf node/.style={draw,circle,fill},every internal node/.style={draw,circle,scale=0.01}}
\Tree [.{} 
		[.{} {}
		[.{} 
			[.{}
				[.{}
					[.{} {}
						[.{} {}
							[.{}
								[.{} {}
									[.{} {} 
										[.{} 
											[.{}
												[.{} 
													[.{} \edge[line width=2pt,dashed];\node[fill=white,draw=white]{};\edge[draw=white];\node[fill=white,draw=white]{}; ] {} 
												]
											{} ]
										{} ]
									]
									 ]
								{} ]
							 ]
					] {}
				]
				{} ]
			{} ]
		]
	{} ]
      
\end{tikzpicture}\\
\end{center}
As defined in Remark~\ref{rem:pasRenouveau}, the filament of all words $\rond T_u$ is not stabilizable.
Indeed, any finite word belongs to the smallest stable tree that contains $\rond T_u$, the latter being thus the complete tree $\set{0,1}^{\g N}$, which has uncountably many infinite branches.
\end{minipage}
\end{rem}

\begin{rem}
	\label{rem:stabilized}
	Let $(\rond T,q)$ be a stabilizable probabilised context tree and $\widehat{\rond T}$ its stabilized.
	For every context $c$ of $\widehat{\rond T}$, define $\widehat{q}_c=q_{\pref(c)}$ where the function \emph{cont} is relative to $\rond T$. Then $(\rond T,q)$ and $(\widehat{\rond T},\widehat{q})$ define the same VLMC.

\begin{minipage}{0.4\textwidth}
This is straightforward because both VLMC, as Markov processes on $\rond{R}$, have the same transition probabilities.
The example of the opposite figure illustrates this construction for the bamboo blossom and its stabilized tree, the double bamboo.
\end{minipage}
\begin{minipage}{0.6\textwidth}
\centering
\hskip 15pt
\begin{tikzpicture}[scale=0.3]
\tikzset{every leaf node/.style={draw,circle,fill},every internal node/.style={draw,circle,scale=0.001}}
\Tree [.{}
	[.{} {} 
		[.{} [.{} {} [.{} [.{} {} [.{} \edge[line width=2pt,dashed];\node[fill=white,draw=white]{};\edge[draw=white];\node[fill=white,draw=white]{}; {} ] ] {} ] ] {} ] ]
	{}
      ]
\draw (1.8,-1.8) node{{\footnotesize$q_1$}};
\draw (1.6,-3.8) node{{\footnotesize$q_{011}$}};
\draw (1.7,-6) node{{\footnotesize$q_{01011}$}};
\draw (1.8,-8.1) node{{\footnotesize$q_{0101011}$}};
\draw (-2.5,-2.8) node{{\footnotesize$q_{00}$}};
\draw (-2.5,-4.8) node{{\footnotesize$q_{0100}$}};
\draw (-2.8,-7) node{{\footnotesize$q_{010100}$}};
\draw (6,-5) node{$\leadsto$};
\draw (0,-11) node{$\left( \rond T,q\right)$};
\end{tikzpicture}
\begin{tikzpicture}[scale=0.3]
\tikzset{every leaf node/.style={draw,circle,fill},every internal node/.style={draw,circle,scale=0.001}}
\Tree [.{}
	[.{} {} 
		[.{} [.{} {} [.{} [.{} {} [.{} \edge[line width=2pt,dashed];\node[fill=white,draw=white]{};\edge[draw=white];\node[fill=white,draw=white]{}; {} ] ] {} ] ] {} ] ] 
    \edge[draw=white];\node[fill=white,draw=white,scale=4]{}; 
    \edge[draw=white];\node[fill=white,draw=white,scale=4]{}; 
		[.{} [.{} {} [.{} [.{} {} [.{} [.{} {} \edge[draw=white];\node[fill=white,draw=white]{};\edge[line width=2pt,dashed];\node[fill=white,draw=white]{};] {} ] ] {} ] ] {} ] ]
\draw (5.5,-2.8) node{{\footnotesize$q_1$}};
\draw (5,-4.8) node{{\footnotesize$q_1$}};
\draw (4.4,-7) node{{\footnotesize$q_1$}};
\draw (1.2,-3.8) node{{\footnotesize$q_1$}};
\draw (1.7,-6) node{{\footnotesize$q_1$}};
\draw (2.3,-8.1) node{{\footnotesize$q_1$}};
\draw (-5,-2.8) node{{\footnotesize$q_{00}$}};
\draw (-5.3,-4.8) node{{\footnotesize$q_{0100}$}};
\draw (-5.5,-7) node{{\footnotesize$q_{010100}$}};
\draw (-0.7,-3.8) node{{\footnotesize$q_{011}$}};
\draw (-1.2,-6) node{{\footnotesize$q_{01011}$}};
\draw (-1,-8.1) node{{\footnotesize$q_{0101011}$}};
\draw (0,-11) node{$\left( \widehat{\rond T},\widehat q\right)$};
\end{tikzpicture}
\end{minipage}
\end{rem}
\subsection{Stable VLMC and Semi-Markov Chains}
\label{sec:SM}
In this section, semi-Markov chains are defined, following~\cite{barbu/limnios/08}.
Section \ref{subsec:compareSM} is devoted to show that any stable VLMC~$\left( U_n\right) _{n\geq0}$ induces an underlying semi-Markov chain $\left( Z_n\right) _{n\geq0}$:
the state space is the set $\rond S$ of the context alpha-LIS and $Z_n$ is the alpha-LIS of the context $\pref\left( U_n\right)$.
This semi-Markov chain entirely describes the renewal property that arises in a stable VLMC and gives an explicit interpretation of the matrix $Q$.
Nevertheless, the trajectories of the VLMC cannot be recovered from those of the induced semi-Markov chain -- see Remark \ref{rem:semiMarkov}.
Despite this, interestingly, when the set of context alpha-LIS is finite, Theorem~\ref{cor:finite} and Theorem~\ref{th:CNSpourSM} below make it possible to derive equivalences between NSC for the VLMC to admit a stationary probability measure and NSC for the associated semi-Markov chain to have a limit distribution. 
This is developed in Section~\ref{sec:summarized}.
\subsubsection{Definitions}
\label{subsec:defSM}

Semi-Markov chains are defined thanks to so-called Markov renewal chains -- see~\cite{barbu/limnios/08}.

\begin{defi}[Markov Renewal Chain]
\label{def:MRC}
If $\rond E$ is any set, a Markov chain $\left( J_n,T_n\right) _{n\geq0}$ with state space $\rond E\times \g N$ is called a (homogeneous) \emph{Markov Renewal Chain} (shortly MRC) whenever the transition probabilities satisfy: $\forall n\in\g N, \forall a,b\in\rond E, \forall j,k\in\g N$,
\[
\Proba\left(  J_{n+1}=b, T_{n+1} = k\big| J_n = a, T_n =j   \right) = \Proba\left(  J_{n+1}=b, T_{n+1} = k\big| J_n = a  \right) =: p_{a,b}(k)
\]
and $\forall a,b\in\rond E$, $p_{a,b}(0) = 0$. For such a chain, the family $p=\left(p_{a,b}(k)\right)_{a,b\in\rond A, k\geq 1}$ is called its \emph{semi-Markov kernel}.
\end{defi}

\begin{defi}[Semi-Markov Chain]
\label{def:semiMarkov}
Let $(J_n,T_n)_{n\geq0}$ be a Markov renewal chain with state space $\rond E\times \g N$.
Assume that $T_0=0$.
For any $n\in \g N$,  let $S_n$ be defined by
\[
S_n = \sum_{i=0}^n T_i.
\]
The \emph{semi-Markov chain} associated with $\left( J_n,T_n\right) _{n\geq0}$ is the $\rond E$-valued process $\left( Z_j\right) _{j\geq0}$ defined by 
\[
\forall j \hbox{ such that } S_n\leq j < S_{n+1}, \hskip 5mm Z_j = J_n.
\]
\end{defi}
Note that the sequence $\left( S_n\right) _{n\geq 0}$ is almost surely increasing because of the assumption $p_{a,b}(0) = 0$ (instantaneous transitions are not allowed) that guarantees that $T_n\geq 1$ almost surely, for any $n\geq 1$.

The $S_n$ are \emph{jump times}, the $T_n$ are \emph{sojourn times} in a given state and $Z_j$ stagnates at a same state between two successive jump times.
The process $J=\left( J_n\right) _n$, called the \emph{internal (or underlying) chain} of the semi-Markov chain $\left( Z_n\right) _n$, is a Markov chain on $\rond E$.
For this Markov chain, the transition probability between states $a$ and $b$ is the number $p_{a,b}=\sum_{k\geq 1} p_{a,b}(k)$.

Definitions~\ref{def:MRC} and~\ref{def:semiMarkov} make transitions of $J$ to the same state between time $n$ and time $n+1$ possible. Nevertheless, one can boil down to the case where $p_{a,a}(k)=0$ for all $a\in\rond E, k\in\g N$, thus obtaining a semi-Markov chain \emph{with true jumps}. 
Indeed, suppose that there exist some $a\in\rond E$ and $k\in\g N$ such that $p_{a,a}(k)\not= 0$ for a certain semi-Markov chain $\left( Z_n\right)$. 
Consider the chain $\left( Z'_n\right)$ obtained from $\left( Z_n\right)$ by forgetting the jumps to the same position.
It is the semi-Markov chain associated with the MRC $\left( J'_n,T'_n\right) _{
n\geq 0}$ defined by $T'_0=0$, $J'_0=J_0$ a.s. and by the following semi-Markov kernel $p'$:
for $a,b\in\rond E, a\not= b$, $p'_{a,b}(1) = p_{a,b}(1)$ and for $k\geq 2$,
\begin{align}
\label{jumpSM}
p'_{a,b}(k) &= \Proba\left( J_1' = b, T'_1 = k\big| J_0 = a \right)\\
&= p_{a,b}(k) + \sum_{i=1}^{k-1} p_{a,a}(i)p'_{a,b}(k-i)
\end{align}
(and thus $p'_{a,a}(k)=0$ for any $k\geq 0$).
Note that even if the semi-Markov chains $\left( Z_n\right)$ and $\left( Z'_n\right)$ do not have the same internal chains, they get the same trajectories.
It is worth noticing that the conditional expectations of $T_1$ and $T_1'$ are simultaneously finite or infinite. Indeed, a straightforward calculation from \eqref{jumpSM} leads to: for $a\in\rond E$,
\begin{equation}
\label{jumpSMtime}
\Espe\left( T'_1 | J'_0 = a \right)\times\left( 1-\sum_{i\geq 1}p_{a,a}(i) \right) = \Espe\left( T_1 | J_0 = a \right).
\end{equation}
Moreover, denoting $p_{a,b} = \sum_{k\geq 1} p_{a,b}(k)$ and $p'_{a,b} = \sum_{k\geq 1} p'_{a,b}(k)$, one gets $p'_{a,b} = \frac{p_{a,b}}{\sum_{c\not= a}p_{a,c}}$, as shortly mentioned in \cite{barbu/limnios/08}.
Since we make use of both versions of a semi-Markov chain in the paper -- with true jumps or not, it seemed important to us to devote these few lines to underline how they are connected.


\subsubsection{A semi-Markov chain induced by a stable VLMC}
\label{subsec:compareSM}

A stable VLMC always induces a semi-Markov chain, as described in the following.

Let $\left( U_n\right) _{n\geq 0}$ be a stable non-null VLMC and assume that $C_0 = \pref\left( U_0\right)$ is a finite context.
Recall that $\rond S$ denotes the set of context alpha-LIS of the VLMC.
For every $n\geq 0$, let $C_n$ be the context of $U_n$ and $Z_n$ be the alpha-LIS of $C_n$:
\begin{equation}
\label{def:Zn}
C_n := \pref\left( U_n\right)  \hskip 5mm \hbox{ and }  \hskip 5mm Z_n := \alpha_{C_n}s_{C_n}.
\end{equation}

\begin{figure}[h]
\begin{center}
{\scriptsize
\tikzstyle{fleche}=[->,>=latex,thick]
\tikzstyle{doublefleche}=[<->,>=latex]
\tikzstyle{alphaLIS}=[line width=2pt,color=blue]
\tikzstyle{pref}=[line width=2pt,color=orange]
\begin{tikzpicture}[scale=0.6]
\draw[fleche] (-20,1)--(-20,13);
\draw (-20.1,2)--(-19.9,2);
\draw (-20.5,2) node{$S_n$};
\draw (-20.1,4)--(-19.9,4);
\draw (-21,4) node{$S_n+1$};
\draw (-20.1,8)--(-19.9,8);
\draw (-21,8) node{$S_n+i$};
\draw (-20.1,10)--(-19.9,10);
\draw (-22.2,10) node{$S_{n+1}=S_n+i+1$};
\draw[doublefleche] (-18.2,2)--(-18.2,10);
\draw (-18.8,6) node{$T_{n+1}$};
\draw (-20.5,13.8) node{time};
\draw[dotted,thick] (-21,5.5)--(-21,6.3); 
\draw[alphaLIS] (-3,2)--(-8,2); 
\draw (0,2)--(-8,2);
\draw[dotted,thick] (0,2)--(2,2);
\draw (2.8,2) node{$U_{S_n}$};
\draw (-3,1.9)--(-3,2.1);
\draw (-7,1.9)--(-7,2.1);
\draw (-8,1.9)--(-8,2.1);
\draw (-5,1.7) node{$s$};
\draw (-7.5,1.7) node{$\alpha$};
\draw [decorate,decoration={brace,amplitude=8pt},yshift=0pt]
(-3,1.6) -- (-8,1.6) {}; 
\draw (-5.5,0.8) node{$C_{S_n}$};
\draw[alphaLIS] (-3,4)--(-8,4); 
\draw[pref] (-8,4)--(-9,4);
\draw (0,4)--(-9,4);
\draw (-3,3.9)--(-3,4.1);
\draw (-7,3.9)--(-7,4.1);
\draw (-8,3.9)--(-8,4.1);
\draw (-9,3.9)--(-9,4.1);
\draw[dotted,thick] (0,4)--(2,4);
\draw (3,4) node{$U_{S_n+1}$};
\draw (-5,3.7) node{$s$};
\draw (-7.5,3.7) node{$\alpha$};
\draw [decorate,decoration={brace,amplitude=8pt},yshift=0pt]
(-3,3.6) -- (-9,3.6) {}; 
\draw (-6,2.8) node{$C_{S_n+1}$};
\draw[dotted,thick] (-7,5.5)--(-7,6.3); 
\draw[alphaLIS] (-3,8)--(-8,8); 
\draw[pref] (-8,8)--(-15,8);
\draw (0,8)--(-15,8);
\draw[dotted,thick] (0,8)--(2,8);
\draw (3,8) node{$U_{S_n+i}$};
\draw (-3,7.9)--(-3,8.1);
\draw (-7,7.9)--(-7,8.1);
\draw (-8,7.9)--(-8,8.1);
\draw (-9,7.9)--(-9,8.1);
\draw (-14,7.9)--(-14,8.1);
\draw (-15,7.9)--(-15,8.1);
\draw (-5,7.7) node{$s$};
\draw (-7.5,7.7) node{$\alpha$};
\draw (-8.5,7.7) node{$\beta_1$};
\draw (-14.5,7.7) node{$\beta_i$};
\draw [decorate,decoration={brace,amplitude=8pt},yshift=0pt]
(-3,7.6) -- (-15,7.6) {}; 
\draw (-9,6.8) node{$C_{S_n+i}$};
\draw [decorate,decoration={brace,amplitude=8pt},yshift=0pt]
(-8,13) -- (-3,13) {}; 
\draw (-5.5,14) node{$J_{n} = Z_{j} = \alpha s$};
\draw (-5.5,15) node{$S_{n} \leq j \leq S_{n+1} -1$};
\draw[alphaLIS] (-12,10)--(-16,10);  
\draw (0,10)--(-16,10);
\draw[dotted,thick] (0,10)--(2,10);
\draw (3,10) node{$U_{S_{n+1}}$};
\draw (-12,9.9)--(-12,10.1);
\draw (-15,9.9)--(-15,10.1);
\draw (-16,9.9)--(-16,10.1);
\draw (-15.5,9.7) node{$\beta$};
\draw (-13.5,9.7) node{$t$};
\draw [decorate,decoration={brace,amplitude=8pt},yshift=0pt]
(-12,9.6) -- (-16,9.6) {}; 
\draw (-14,8.8) node{$C_{S_{n+1}}$};
\draw[alphaLIS] (-12,12)--(-16,12);   
\draw[pref] (-16,12)--(-17,12); 
\draw (0,12)--(-17,12);
\draw[dotted,thick] (0,12)--(2,12);
\draw (3.2,12) node{$U_{S_{n+1}+1}$};
\draw (-12,11.9)--(-12,12.1);
\draw (-15,11.9)--(-15,12.1);
\draw (-16,11.9)--(-16,12.1);
\draw (-17,11.9)--(-17,12.1);
\draw (-15.5,11.7) node{$\beta$};
\draw (-13.5,11.7) node{$t$};
\draw [decorate,decoration={brace,amplitude=8pt},yshift=0pt]
(-12,11.6) -- (-17,11.6) {}; 
\draw (-14,10.8) node{$C_{S_{n+1}+1}$};
\draw [decorate,decoration={brace,amplitude=8pt},yshift=0pt]
(-16,13) -- (-12,13) {}; 
\draw (-14,14) node{$J_{n+1} = Z_{S_{n+1}} = \beta t$};
\end{tikzpicture}
}
\end{center}
\caption{Evolution of a VLMC $\left( U_j\right)$ between two ``jumping'' times $S_n$ and $S_{n+1}$. 
On the figure, right-infinite words $U_{S_n}, U_{S_n+1}, \dots$ grow to the left when time increases from the bottom to the top. 
Their respective \emph{cont}'s (which are contexts) are $C_{S_n}, C_{S_n+1}, \dots$, they are colored.
On the figure, the successive alpha-LIS are marked in blue, they stagnate at $\alpha s$ during the time $T_{n+1} = S_{n+1} - S_n$ and jump at $\beta t$ at time $S_{n+1} =S_n+i+1$ . 
\label{fig:SM}}
\end{figure}
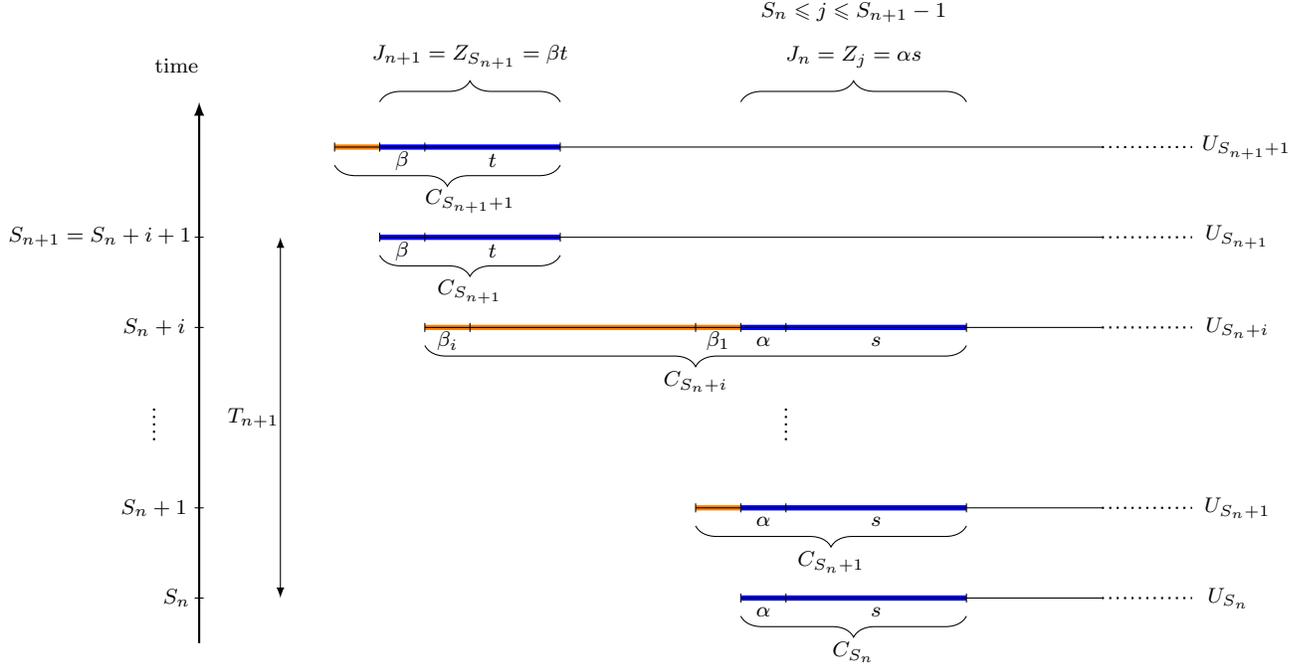

Let us describe the evolution of these two processes, when the VLMC $\left( U_j\right) _j$ is growing by adding successively a letter on the left. 
One can refer to Figure~\ref{fig:SM} as a visual support of this description.
For $j\geq 0$, assume that $C_j = \cdots [\alpha s]$ has $\alpha s$ as an alpha-LIS.
When adding a letter $\beta$, two cases can occur (recall that since the context tree is stable, if $c$ is a context and $\beta\in\rond A$, then $\beta c$ is non-internal -- see Proposition~\ref{pro:defstable}(ii)):

-- either $\beta C_j$ is a context and then $C_{j+1} = \beta C_j = \cdots [\alpha s]$.
In this case the process $Z$ stagnates at $\alpha s$;

-- or $\beta C_j$ is \emph{not} a context and then by Lemma \ref{lem:fondLem}, $C_j$ begins with some LIS $t$ and $\beta t$ is a context being its own alpha-LIS.
In that case, $C_{j+1} = \beta t$ and $Z$ jumps at $\beta t$. 
Notice that the term \emph{jumps} is not completely adequate because $\alpha s = \beta t$ could occur.
With this evolution in mind, let $\left( S_n\right) _{n\geq 0}$ be the increasing sequence of times defined by $S_0 = 0$ and for any $n\geq 1$,
\begin{equation}
\label{def:Sn}
S_n := \inf \set{ k>S_{n-1}, \left| C_k\right|\leq\left| C_{k-1}\right|},
\end{equation}
with the usual convention that it equals $+\infty$ whenever $\forall k> S_{n-1}, \left| C_{k}\right| > \left| C_{k-1}\right|$.
Let also $T_0=0$ and, for every $n\geq 1$, denote by $T_n$ the difference
\begin{equation}
\label{def:Tn}
{T}_n := S_n - S_{n-1}.
\end{equation}
Finally, for any $n\geq 0$, let 
\begin{equation}
\label{def:Jn}
{J}_n := Z_{S_n}.
\end{equation}
With these notations, the processes $\left( T_n\right) _{n\geq 0}$ and  $\left( J_n\right) _{n\geq 0}$ evolve as follows.
Assume that $J_n = C_{S_n}= Z_{S_n} = \alpha s \in\rond S$ for some $n\geq 0$. 
For $i\geq 1$, when adding a letter $\beta$, as long as $\beta C_{S_n+i-1}$ remains a context, then 
$Z_{S_n+i} = Z_{S_n} = \alpha s  = J_n$.
The first time when $\beta C_{S_n+i}$ is not a context (we shall see that this occurs almost surely if and only if Assumption~\eqref{hypck} is fulfilled), then $S_{n+1} = S_n +i$, $C_{S_n + i} = \beta t\in\rond S$ and $J_{n+1} = \beta t$. 
It turns out that $\left( Z_n\right) _{n\geq 0}$ is a semi-Markov chain having $\left( J_n, T_n\right) _{n\geq 0}$ as an underlying (Markov renewal) chain, as specified in the following proposition.

\begin{pro}
\label{pro:compareSM}
Let $\left( U_n\right) _{n\geq 0}$ be a stable non-null VLMC such that
\begin{equation}
\label{hypck}
\forall\alpha s\in\rond S,~~
\lim_{k\rightarrow\infty}  \kappa_{\alpha s}(k) = 0,
\end{equation}
where $ \kappa_{\alpha s}(k) $ is defined in \eqref{def:ck}.
Assume that $C_0=\pref\left( U_0\right)$ is a finite word.
Then with the above notations~\eqref{def:Zn}, \eqref{def:Sn}, \eqref{def:Tn} and \eqref{def:Jn},
\begin{itemize}
\item[(i)]
 $S_n$ and $T_n$ are almost surely finite.
 Furthermore, for every $\alpha s\in\rond S$ and every $n\geq 1$,
 \[
 \Espe\left( T_{n} \big| J_{n-1} = \alpha s\right) = \kappa_{\alpha s}\in [0,+\infty ].
 \]
 See~\eqref{defKappa} where $\kappa_{\alpha s}=\sum _{k\geq 1}\kappa_{\alpha s}(k)$ is defined;
\item[(ii)]
the jump times $S_n$ can also be written $S_n =  \inf \set{ k> S_{n-1}, C_k \in\rond S}$;
\item[(iii)]
$\left( Z_n\right) _{n\geq 0}$ is an $\rond S$-valued semi-Markov chain associated with the Markov renewal  chain $\left( J_n,T_n\right) _{n\geq 0}$. 
The associated semi-Markov kernel writes: $\forall \alpha s, \beta t \in\rond S$, $\forall k\geq 1$,
\[
p_{\alpha s, \beta t}(k) = \sum_{\substack{c\in\rond C,~c=t\cdots\\[2pt]c= \cdots [\alpha s]\\[2pt]|c| = |\alpha s| + k-1}} \casc\left(\beta c\right) .
\]
Moreover, $Q$ is the transition matrix of the $\rond S$-valued Markov chain $\left( J_n\right) _{n\geq 0}$.
\end{itemize}
\end{pro}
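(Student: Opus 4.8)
The plan is to work directly with the trajectories of the VLMC and the description of the $(C_j)$-dynamics given just before the statement. First I would establish (i). The key observation is that, conditionally on $J_{n-1}=\alpha s$, the event $\{T_n = k\}$ is, up to a renewal shift at time $S_{n-1}$, the event that the first $k-1$ added letters keep $\beta_{1}\cdots\beta_{i}\alpha s$ a context and the $k$-th added letter makes it external. Using the fundamental formula $\pref(\beta U_m)=\pref(\beta C_m)$ valid in the stable case, together with Lemma~\ref{lem:fondLem}, the set of contexts reachable from $\alpha s$ in exactly $k-1$ steps while never leaving the $\alpha s$-class is precisely $\{c\in\rond C^f:~c=\cdots[\alpha s],~|c|=|\alpha s|+k-1\}$, and the probability of reaching such a $c$ is $\casc(c)$ (this is essentially the Cascade Formula~\eqref{cascade3} read along a realization, or a direct induction on the transition probabilities $q$). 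Summing over $c$ gives $\Proba(T_n\ge k\,|\,J_{n-1}=\alpha s)=\kappa_{\alpha s}(k)$ in the appropriate cumulative sense, and assumption~\eqref{hypck} forces $\kappa_{\alpha s}(k)\to 0$, hence $\Proba(T_n<\infty\,|\,J_{n-1}=\alpha s)=1$; since $\rond S$ is at most countable and $J_0$ is a.s.\ in $\rond S$, one propagates this to get $S_n,T_n<\infty$ a.s.\ for all $n$. The formula $\Espe(T_n\,|\,J_{n-1}=\alpha s)=\sum_{k\ge 1}k\,\Proba(T_n=k\,|\,J_{n-1}=\alpha s)=\sum_{k\ge1}\kappa_{\alpha s}(k)=\kappa_{\alpha s}$ then follows by the tail-sum expression of the expectation.

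For (ii), I would argue that, in a stable tree, $|C_{k}|\le|C_{k-1}|$ happens exactly when $\beta C_{k-1}$ fails to be a context, i.e.\ when $C_k=\pref(\beta C_{k-1})$ is strictly shorter, and Lemma~\ref{lem:fondLem} tells us that in that case $C_k=\beta t$ for a context LIS $t$, so $C_k\in\rond S$; conversely if $C_k\in\rond S$ then (using Lemma~\ref{lem:contextStableTree}(i) and the evolution description) the length must have dropped, since as long as $Z$ stagnates one has $C_{j+1}=\beta C_j$ strictly longer. This gives the equality of the two descriptions of $S_n$.

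For (iii), the heart of the matter is the Markov renewal property: I would show that $\Proba(J_{n+1}=\beta t,\,T_{n+1}=k\,|\,J_0,T_0,\dots,J_n=\alpha s,T_n)$ depends only on $\alpha s$ and equals the stated $p_{\alpha s,\beta t}(k)$. This is where the stability is crucial: once $U_{S_n}$ begins with $\alpha s$, the renewal property (formalized via $\pref(\beta U_m)=\pref(\beta C_m)$ and the dichotomy before the statement) guarantees that the subsequent evolution of $(C_j)_{j\ge S_n}$, hence of $(Z_j)$, depends on the past only through $C_{S_n}$, and $C_{S_n}=\alpha s$ is itself its own alpha-LIS; so no information from before $S_n$ is ever consulted. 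Concretely, $\{J_{n+1}=\beta t,\ T_{n+1}=k\}$ is the event that at time $S_n+k-1$ the context is some $c$ with $c=\cdots[\alpha s]$, $c=t\cdots$, $|c|=|\alpha s|+k-1$, the next added letter being $\beta$; conditionally on $\alpha s$ the probability of following the path to such $c$ and then choosing $\beta$ is $\casc(\beta c)$ — indeed $\casc(\beta c)=q_{\pref(c)}(\beta)\casc(c)$ by Remark~\ref{rem:cascRem}/Definition~\ref{def:cascade}, where $\casc(c)$ is the probability of the path from $\alpha s$ to $c$ and $q_{\pref(c)}(\beta)$ is the final transition. Summing over admissible $c$ yields $p_{\alpha s,\beta t}(k)$; summing further over $k\ge1$ and noting $\bigcup_k\{c:|c|=|\alpha s|+k-1\}=\{c:c=\cdots[\alpha s]\}$ identifies $p_{\alpha s,\beta t}=\sum_k p_{\alpha s,\beta t}(k)$ with $\sum_{c=t\cdots,\,c=\cdots[\alpha s]}\casc(\beta c)=Q_{\alpha s,\beta t}$, so $Q$ is the transition matrix of $(J_n)$. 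That $(J_n,T_n)$ is genuinely a Markov renewal chain (time-homogeneous, $p_{\alpha s,\beta t}(0)=0$) is then immediate, and $(Z_n)$ is its associated semi-Markov chain by construction of $S_n$ as partial sums of the $T_n$ and $Z_j=J_n$ on $[S_n,S_{n+1})$.

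The main obstacle I expect is making the \emph{Markov renewal} step fully rigorous: one must show that conditioning on the whole past $(J_0,T_0,\dots,J_n,T_n)$ collapses to conditioning on $J_n$ alone. This requires carefully exploiting the strong Markov property of $(U_n)$ at the stopping time $S_n$ together with the stable-tree identity $\pref(\beta U_m)=\pref(\beta C_m)$ to certify that the realization of $(C_j)_{j>S_n}$ never reads past the alpha-LIS $\alpha s$ that sits at the left end of $C_{S_n}$ — i.e.\ that $\alpha s$ truly acts as a renewal pattern, which is exactly the content flagged in the discussion preceding Proposition~\ref{pro:compareSM}. Verifying that $S_n$ (equivalently each $T_n$) is a bona fide stopping time for the relevant filtration, and that finiteness from (i) licenses the strong Markov argument at every level, is the bookkeeping that needs care; once that is in place, the computation of $p_{\alpha s,\beta t}(k)$ is the routine cascade calculation sketched above.
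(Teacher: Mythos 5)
Your proposal is correct and follows essentially the same route as the paper: the identity $\Proba(T_n\geq k\mid J_{n-1}=\alpha s)=\kappa_{\alpha s}(k)$ obtained by summing cascades over the admissible contexts of length $|\alpha s|+k-1$, the hypothesis~\eqref{hypck} to get a.s.\ finiteness, the tail-sum formula for $\Espe(T_n\mid J_{n-1}=\alpha s)=\kappa_{\alpha s}$, and the cascade computation of the kernel $p_{\alpha s,\beta t}(k)=\sum\casc(\beta c)$ followed by summation over $k$ to identify $Q$ as the transition matrix of $(J_n)$. The "main obstacle" you flag (reducing conditioning on the whole past to conditioning on $J_n$ via the renewal identity $\pref(\beta U_m)=\pref(\beta C_m)$) is exactly the point the paper handles by the same direct computation of the conditional probabilities.
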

One can find a proof of Proposition~\ref{pro:compareSM} on page~\pageref{proof:compareSM}.

\begin{rem}
\label{rem:semiMarkov}
The semi-Markov chain $\left( Z_n\right)$ contains less information than the chain $\left( U_n\right)$. 
To illustrate this, here is an example with a finite context tree on the alphabet $\set{0,1}$.

\vskip 5pt
\begin{minipage}{0.4\textwidth}
\centering
\begin{tikzpicture}[scale=0.4]
		\tikzset{every leaf node/.style={draw,circle,fill},every internal node/.style={draw,circle,scale=0.01}}
\Tree [.{} [.{} [.{} {} [.{} {} {} ] ] [.{} {} [.{} {} {} ] ] ] [.{} {} [.{} {} {} ] ] 
	]
\end{tikzpicture}
\end{minipage}
\begin{minipage}{0.6\textwidth}
\centering
\begin{tabular}{r|l}
alpha-LIS $\alpha s$&contexts having $\alpha s$ as an alpha-LIS  \\
\hline
10&10,010,110,0010,0110\\
000&000\\
111&111,0111\\
0011&0011
\end{tabular}
\end{minipage}
\vskip 5pt
In this example, 0010 and 0110 are two contexts of the same length, with the same context alpha-LIS 10 and beginning by the same context LIS 0. Hence if we know that ${ J}_n=10$, ${ S}_{n+1}-S_{n}=3$ and ${J}_{n+1}=10$, then ${ Z}_j$ is uniquely determined between the two successive jump times, whereas
there are two possibilities to reconstruct the VLMC $\left( U_n\right)$. With the notations above, there are two cascade terms in $p_{10,10}(3) $:
\begin{align*}
p_{10,10}(3) &= \Proba\left(C_{S_n+1} = 010, C_{S_n+2} = 0010,C_{S_n+3} = 10010 | C_{S_n} =10\right)\\
& \ \ + \Proba\left(C_{S_n+1} = 110, C_{S_n+2} = 0110,C_{S_n+3} = 10110 | C_{S_n} =10\right)\\
&= q_{10}(0) q_{010}(0) q_{0010}(1) + q_{10}(1) q_{110}(0) q_{0110}(1)\\
&= \casc (10010) + \casc (10110).
\end{align*}

\vskip 5pt
\end{rem}
\subsection{Properties of $Q$ in the stable case}
\label{subsection:properties_Q}

For a given probabilised context tree, the matrix $Q$, that has been defined in Section~\ref{subsec:Q} by Formula~\eqref{defQ}, plays a central role in our main Theorem~\ref{fQbij}.
In the case of stable trees, Proposition~\ref{pro:compareSM} gives a probabilistic interpretation of $Q$ as the transition matrix of some Markov chain.
This section is devoted to gathering properties of $Q$
(or of the Markov chain $Q$ is the transition matrix of).

\begin{defi}
\label{def:rowStochastic}
A square (finite or denumerable) matrix $\left( a_{r,c}\right)_{r,c}$ having non-negative entries  is said to be \emph{row-stochastic} whenever all its rows (are summable and) sum to $1$, \emph{i.e.}
\[
\forall r,~\sum _{c}a_{r,c}=1.
\]
\end{defi}

The following assertion is a consequence of Proposition~\ref{pro:compareSM}, (iii).
Remember that the numbers $\kappa_{\alpha s}(k)$ are defined by~\eqref{def:ck}.
Notice also that one can also make a direct combinatorial proof using Lemma~\ref{lem:fondLem}.

\begin{pro}
\label{pro:stochasticity}
Let $\left(\rond T,q\right)$ be a stable probabilised context tree.
Assume that
\begin{equation}
\label{hypckBis}
\forall\alpha s\in\rond S,~~
\lim_{k\rightarrow\infty}  \kappa_{\alpha s}(k) = 0.
\end{equation}
Then, the matrix $Q$ has finite entries and is row-stochastic.
\end{pro}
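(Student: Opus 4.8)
The plan is to prove directly that $Q$ is row-stochastic; finiteness of the entries is then automatic, each being nonnegative and bounded above by the corresponding row sum. Fix $\alpha s\in\rond S$. For $k\geq1$ let $\rond D^{(k)}_{\alpha s}$ be the set of finite contexts $c$ with $c=\cdots[\alpha s]$ and $|c|=|\alpha s|+k-1$ (a finite set, a context tree having finitely many nodes at each height), so that $\kappa_{\alpha s}(k)=\sum_{c\in\rond D^{(k)}_{\alpha s}}\casc(c)$; by Lemma~\ref{lem:contextStableTree}, $\rond D^{(1)}_{\alpha s}=\{\alpha s\}$, hence $\kappa_{\alpha s}(1)=\casc(\alpha s)=1$. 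Write $p_{\alpha s,\beta t}(k):=\sum_{c\in\rond D^{(k)}_{\alpha s},\,c=t\cdots}\casc(\beta c)$ (the semi-Markov kernel of Proposition~\ref{pro:compareSM}(iii)), so that grouping the defining sum of $Q$ by context length gives $Q_{\alpha s,\beta t}=\sum_{k\geq1}p_{\alpha s,\beta t}(k)$. As all terms are nonnegative, it suffices to prove, for every $k\geq1$,
\[
\sum_{\beta t\in\rond S}p_{\alpha s,\beta t}(k)=\kappa_{\alpha s}(k)-\kappa_{\alpha s}(k+1);
\]
then $\sum_{\beta t\in\rond S}Q_{\alpha s,\beta t}=\sum_{k\geq1}\big(\kappa_{\alpha s}(k)-\kappa_{\alpha s}(k+1)\big)$ telescopes to $\kappa_{\alpha s}(1)-\lim_{k\to\infty}\kappa_{\alpha s}(k+1)=1$, the limit vanishing exactly by hypothesis~\eqref{hypckBis} --- the only place it is used.

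The heart of the matter is a multiplicity count of the terms $\casc(\beta c)$, for $c\in\rond D^{(k)}_{\alpha s}$ and $\beta\in\rond A$. Since $\rond T$ is stable, $\beta c$ is non-internal (Proposition~\ref{pro:defstable}(ii)), hence a context or external. If $\beta c$ is a context, then $s$ is still its longest internal strict suffix (maximality of $s$; non-internality of the context $c$), so $\beta c\in\rond D^{(k+1)}_{\alpha s}$; and no prefix $t$ of $c$ satisfies $\beta t\in\rond S$, for such a $\beta t$ would be a context (as $\rond S\subseteq\rond C$) and a prefix of the context $\beta c$, forcing $\beta t=\beta c$ and hence $\beta c$ to be its own alpha-LIS --- impossible, $\beta c$ having alpha-LIS $\alpha s$ with $|\alpha s|\leq|c|<|\beta c|$. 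If instead $\beta c$ is external, i.e. $\beta\in\rond A_c$ in the notation of Lemma~\ref{lem:fondLem}, that lemma produces a context LIS $t_\beta$, a prefix of $c$, with $\beta t_\beta=\pref(\beta c)$ a finite context equal to its own alpha-LIS, so $\beta t_\beta\in\rond S$; and by the uniqueness clause of the lemma, $t_\beta$ is the only prefix $t$ of $c$ with $\beta t\in\rond S$. Hence each $\casc(\beta c)$, $c\in\rond D^{(k)}_{\alpha s}$, occurs exactly once in $\sum_{\beta t\in\rond S}p_{\alpha s,\beta t}(k)$ when $\beta\in\rond A_c$ and not at all otherwise, so $\sum_{\beta t\in\rond S}p_{\alpha s,\beta t}(k)=\sum_{c\in\rond D^{(k)}_{\alpha s}}\sum_{\beta\in\rond A_c}\casc(\beta c)$; moreover, by Lemma~\ref{lem:contextStableTree}(ii), $c'\mapsto(\text{first letter of }c',\,\sigma(c'))$ is a bijection from $\rond D^{(k+1)}_{\alpha s}$ onto $\{(\beta,c):c\in\rond D^{(k)}_{\alpha s},\ \beta c\in\rond C\}$, giving $\sum_{c\in\rond D^{(k)}_{\alpha s}}\sum_{\beta:\,\beta c\in\rond C}\casc(\beta c)=\kappa_{\alpha s}(k+1)$.

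To conclude, each $c\in\rond D^{(k)}_{\alpha s}$ is a context, hence non-internal, and Remark~\ref{rem:cascRem} gives $\sum_{\beta\in\rond A}\casc(\beta c)=\casc(c)$ (a finite sum, $\rond A$ being finite); subtracting the terms with $\beta\notin\rond A_c$ and summing over $c\in\rond D^{(k)}_{\alpha s}$, the two identities of the previous paragraph yield $\sum_{\beta t\in\rond S}p_{\alpha s,\beta t}(k)=\kappa_{\alpha s}(k)-\kappa_{\alpha s}(k+1)$, which is the claim. The step needing the most care is the multiplicity count above --- establishing that ``$\beta c$ a context'' contributes to level $k+1$ of $\kappa_{\alpha s}$ whereas ``$\beta c$ external'' contributes exactly once to one entry of $Q$ at slice $k$, with nothing double-counted --- and it rests entirely on Lemma~\ref{lem:fondLem} (uniqueness of $t_\beta$) and Lemma~\ref{lem:contextStableTree} (the alpha-LIS is unchanged along shifts). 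A one-line alternative bypassing the computation: under~\eqref{hypckBis}, Proposition~\ref{pro:compareSM}(i) makes all jump times almost surely finite, so $(J_n)$ is a genuine $\rond S$-valued Markov chain, and its transition matrix $Q$ (Proposition~\ref{pro:compareSM}(iii)) is row-stochastic.
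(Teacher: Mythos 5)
Your proof is correct, but your main argument takes a genuinely different route from the paper's. The paper proves Proposition~\ref{pro:stochasticity} probabilistically, as a byproduct of Proposition~\ref{pro:compareSM}(iii): summing the semi-Markov kernel over $k$ and over $\beta t$ identifies $Q$ as the transition matrix of the $\rond S$-valued Markov chain $\left( J_n\right)_n$, whence row-stochasticity --- exactly the ``one-line alternative'' you append at the end. Your primary argument is instead the direct combinatorial proof that the paper explicitly mentions as possible (``one can also make a direct combinatorial proof using Lemma~\ref{lem:fondLem}'') but never writes out, and you execute it correctly: the dichotomy ``$\beta c$ a context versus $\beta c$ external'' from stability, the uniqueness of $t_\beta$ in Lemma~\ref{lem:fondLem} to rule out double-counting, the bijection from $\rond D^{(k+1)}_{\alpha s}$ onto $\set{(\beta,c):\beta c\in\rond C}$ via Lemma~\ref{lem:contextStableTree}(ii), and Remark~\ref{rem:cascRem} to close the telescoping $\sum_{\beta t}p_{\alpha s,\beta t}(k)=\kappa_{\alpha s}(k)-\kappa_{\alpha s}(k+1)$. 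What each approach buys: the probabilistic route is shorter but, as stated, Proposition~\ref{pro:compareSM} assumes the VLMC is \emph{non-null}, an hypothesis absent from Proposition~\ref{pro:stochasticity}; your combinatorial computation uses only stability and~\eqref{hypckBis}, so it matches the stated hypotheses exactly and makes transparent that~\eqref{hypckBis} enters solely through the vanishing of the telescoped tail. Conversely, the semi-Markov route explains \emph{why} the identity holds (conservation of probability at each jump) and is the one the paper needs anyway for Section~\ref{sec:SM}.
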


The row-stochasticity of $Q$ writes
\[
\forall \alpha s\in\rond S,~\sum _{\beta t\in\rond S}Q_{\alpha s,\beta t}=1.
\]

\begin{rem}
\label{rem:realisation}
Any stochastic matrix with strictly positive coefficients $A=\left( a_{i,j}\right) _{i\geqslant 0,j\geqslant 0}$ is the matrix~$Q$ associated with some non-null probabilised stable context tree.
It may be realised for instance with a left-comb of left-combs as follows.

\begin{minipage}{0.6\textwidth}
\centering
\begin{tikzpicture}[scale=0.32]
		\tikzset{every leaf node/.style={draw,circle,fill},every internal node/.style={draw,circle,scale=0.01}}
\Tree [.{}
	[.{}
		[.{}
			[.{} 
					[.{} 
						[.{} \edge[line width=2pt,dashed];\node[fill=white,draw=white]{};  [.{} 
		[.{}
			[.{}
				[.{}
					[.{} 
						[.{} \edge[line width=2pt,dashed];\node[fill=white,draw=white]{}; {} ]
						{} ]
					{} ]
			{} ]
		{} ]
	{} ]]
						[.{} 
		[.{}
			[.{}
				[.{}
					[.{} 
						[.{} \edge[line width=2pt,dashed];\node[fill=white,draw=white]{}; {} ]
						{} ]
					{} ]
			{} ] 
		{} ]
	{} ]
					]
				[.{} 
		[.{}
			[.{}
				[.{}
					[.{} 
						[.{} \edge[line width=2pt,dashed];\node[fill=white,draw=white]{}; {} ]
						{} ]
					{} ]
			{} ] 
		{} ]
	{} ]
 			]
		[.{} 
		[.{}
			[.{}
				[.{}
					[.{} 
						[.{} \edge[line width=2pt,dashed];\node[fill=white,draw=white]{}; {} ]
						{} ]
					{} ]
			{} ] 
		{} ]
	{} ]
		] [.{} 
		[.{}
			[.{}
				[.{}
					[.{} 
						[.{} \edge[line width=2pt,dashed];\node[fill=white,draw=white]{}; {} ]
						{} ]
					{} ]
			{} ] 
		{} ]
	{} ]
	]	 
	[.{} 
		[.{}
			[.{}
				[.{}
					[.{} 
						[.{} \edge[line width=2pt,dashed];\node[fill=white,draw=white]{}; {} ]
						{} ]
					{} ]
			{} ] 
		{} ]
	{} ]
	]
\end{tikzpicture}
\end{minipage}
\begin{minipage}{0.39\textwidth}
The \emph{left-comb of left-combs} is the context tree on the alphabet $\set{0,1}$ as drawn on the left:
the finite contexts are the $0^p10^q1$, $p,q\geq 0$.
A left-comb of left-combs is a stable context tree.
Its has infinitely many infinite branches, namely $0^\infty$ and the $0^p10^\infty$, $p\geq 0$.
\end{minipage}

\vskip 10pt
For any $p,q\geq 0$, the alpha-LIS of $0^p10^q1$ is $10^q1$.
In particular, the set $\rond S$ of alpha-LIS of contexts is infinite.
In this case, for any $q\geq 0$, the set of contexts having $10^q1$ as an alpha-LIS is also infinite.

Probabilise this context tree by a family $\left( q_c\right) _c$ of probability measures on $\{ 0,1\}$.
Denote, for every $q,p\geq 0$,
\[
c_{q,p}=\casc\left( 0^p10^q1\right)
=\prod _{0\leq k\leq p-1}q_{0^k10^q1}(0).
\]
Assumption~\eqref{hypckBis} is equivalent to $c_{q,p}$ converging to $0$ when $p$ tends to $\infty$, for any $q$.
The square matrix~$Q$ is infinite and, under the latter assumption, its entries write
\[
Q_{10^q1,10^p1}=\casc\left( 10^p10^q1\right) =c_{q,p}-c_{q,p+1}.
\]

A row-stochastic positive infinite matrix $A$ being given, a simple calculation shows that if one defines the probability measures $q_{0^p10^q1}$ of a left-comb of left-combs by
\[
	q_{0^p10^q1}(1)=\frac{a_{q,p}}{1-\sum_{k=0}^{p-1}a_{q,k}},
\]
then $Q_{10^q1,10^p1}=a_{q,p}$.
The question whether any stochastic matrix (with some zero coefficients) can be realized as the $Q$ matrix of some non-null stable VLMC seems to be more difficult.
Namely, zero coefficients in $Q$ assuming non-zero $q_c(\alpha)$ constraint the shape of the context tree.
\end{rem}

\begin{pro}
\label{pro:irreducibility}
Let $\left(\rond T,q\right)$ be a non-null stable probabilised context tree.
Then the matrix $Q$ is irreducible.
\end{pro}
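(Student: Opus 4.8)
The plan is to translate the statement ``$Q_{\alpha s,\beta t}>0$'' into a purely combinatorial reachability statement and then to build on the already established irreducibility of the context chain restricted to $\rond C^f$ (Lemma~\ref{lem:context_markov}).

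First I would record what a positive entry of $Q$ means. Since $U$ is non-null, every cascade of a word on which $\casc$ is defined is a finite product of strictly positive numbers; and by Lemma~\ref{lem:contextStableTree}(ii), if $c\in\rond C^f$ has $\alpha s$ as alpha-LIS then each suffix $\sigma^k(c)$ with $0\le k\le|c|-|\alpha s|$ is a context, so that $\casc(\beta c)$ is well defined and positive for every letter $\beta$. Hence, for $\alpha s,\beta t\in\rond S$,
\[
Q_{\alpha s,\beta t}>0
\quad\Longleftrightarrow\quad
\text{there is } c\in\rond C^f \text{ such that } c=t\cdots \text{ and } c=\cdots[\alpha s].
\]
Next I would unwind the transitions of the Markov chain $\left( C_n\right)_{n}=\left(\pref\left( U_n\right)\right)_{n}$ once it sits in its absorbing set $\rond C^f$: from a finite context $c$, appending a letter $\beta$ (which occurs with probability $q_c(\beta)>0$) leads either to $\beta c$ when this word is a context --- a \emph{climbing} move --- or, when $\beta c$ is external, to $\pref(\beta c)=\beta t_\beta$, where $t_\beta$ is the unique context LIS provided by Lemma~\ref{lem:fondLem} --- a \emph{jump} move. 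Three elementary observations will be used repeatedly: a jump move lands in $\rond S$; a climbing move never lands in $\rond S$; and, since the alpha-LIS of $\gamma v$ coincides with that of $v$ for any letter $\gamma$ and any non-internal word $v$, the alpha-LIS stays constant along any run of consecutive climbing moves. (Here one also uses that every element of $\rond S$ equals its own alpha-LIS, i.e.\ has cascade $1$.)

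The core step is the equivalence: for $\alpha s,\beta t\in\rond S$, one has $(Q^n)_{\alpha s,\beta t}>0$ for some $n\ge1$ if and only if $\beta t$ is reachable from $\alpha s$ by the chain $\left( C_n\right)_{n}$ on $\rond C^f$. For the direct implication, a single factor $Q_{x,y}>0$ is realized, using the characterization above, by a positive-probability path of $\left( C_n\right)_{n}$ made of a run of climbing moves from $x$ up to a witness context $c$ having $x$ as alpha-LIS (this run being legitimate by Lemma~\ref{lem:contextStableTree}(ii)), followed by one jump move from $c$ to $y$ (legitimate because the LIS of $y$ is a prefix of $c$, so that $\beta c$ strictly extends the leaf $y$ and $\pref(\beta c)=y$ by Lemma~\ref{lem:fondLem}); concatenating these pieces turns a positive product of $Q$-entries into a positive-probability path. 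For the converse, I would take any positive-probability path of $\left( C_n\right)_{n}$ from $\alpha s$ to $\beta t$ and cut it at its successive jump times: since jumps land in $\rond S$ while climbs do not, and since the alpha-LIS is frozen along each climbing run, each piece, consisting of a run of climbs followed by one jump, goes from a state $x\in\rond S$ to a state $y\in\rond S$, and its next-to-last context is a witness for $Q_{x,y}>0$; multiplying these factors yields $(Q^r)_{\alpha s,\beta t}>0$. Finally, since $\rond S\subseteq\rond C^f$ and the chain induced by $\left( C_n\right)_{n}$ on $\rond C^f$ is irreducible by Lemma~\ref{lem:context_markov}, every $\beta t$ is reachable from every $\alpha s$, so $Q$ is irreducible.

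The main obstacle is the bookkeeping in that equivalence: one must check carefully that the down-moves of $\left( C_n\right)_{n}$ occur precisely when the running context enters $\rond S$, that the context reached just before a down-move shares its alpha-LIS with the state at which the current climbing run started, and that Lemma~\ref{lem:fondLem} identifies exactly the context LIS occurring in the definition of $Q$. Once these points are pinned down, the rest is a routine unwinding of the definitions of $\casc$, LIS and $\pref$ in the stable setting.
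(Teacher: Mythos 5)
Your proof is correct and follows essentially the same route as the paper's: both deduce the irreducibility of $Q$ from the irreducibility of the context chain on $\rond C^f$ (Lemma~\ref{lem:context_markov}), the inclusion $\rond S\subseteq\rond C^f$ (Lemma~\ref{lem:contextStableTree}), and a decomposition of admissible context paths at their jump times. The only difference is presentational: the paper delegates the identification of $Q$-reachability with reachability for the context chain to Proposition~\ref{pro:compareSM} (which exhibits $Q$ as the transition matrix of the internal chain of the induced semi-Markov process), whereas you re-derive that correspondence combinatorially from the definition of $Q$ via the climbing/jump dichotomy.
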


See Section~\ref{sec:proofs} page~\pageref{proof:irreducibility} for a proof of this proposition.

\subsection{Stationary measure for a stable VLMC vs recurrence of $Q$}
\label{subsection:existence_stable}

The following result links the existence and the uniqueness of a stationary probability measure of a VLMC to the recurrence of $Q$.
Let us recall the definition of recurrence and state a necessary and sufficient condition to get a (unique) invariant probability measure for stable trees.
In the sequel, a \emph{stochastic} matrix is a row-stochastic one -- see Definition~\ref{def:rowStochastic}.
Note that the powers of a stochastic matrix are well defined and also stochastic.

\begin{defi}
Let $A=\left( a_{i,j}\right) _{i,j}$ be a stochastic irreducible countable matrix.
Denote by $a_{i,j}^{(k)}$ the $(i,j)$-th entry of the matrix $A^k$.
The matrix $A$ is \emph{recurrent} whenever there exists $i$ such that
\[
\sum_{k=1}^{\infty}a_{i,i}^{(k)}=1.
\]
\end{defi}
Any stochastic irreducible countable matrix may be viewed as the transition matrix of an irreducible Markov chain with countable state space. 
The recurrence means that there is a state $i$ (and this is true for every state because of irreducibility) for which the first return time is a.s. finite.
When in addition the expectation of the return times are finite, the matrix is classically called \emph{positive recurrent}.

\begin{theo}
\label{th:stable}
Let $(\rond T,q)$ be a non-null probabilised context tree.
Assume that $\rond T$ is stable.
Then, the following assertions are equivalent.
\begin{enumerate}
\item The VLMC associated with $(\rond T,q)$ has a unique stationary probability measure
\item The VLMC associated with $(\rond T,q)$ has at least a stationary probability measure
\item The three following conditions are satisfied:
\begin{itemize}
\item[$(c_1)$] the cascade series~\eqref{convCasc} converge
\item[$(c_2)$] $Q$ is recurrent
\item[$(c_3)$] $\sum _{\alpha s\in\rond S} v_{\alpha s}\kappa_{\alpha s}<+\infty$, where $\left( v_{\alpha s}\right) _{\alpha s}$ is the unique non-negative left-fixed vectors of $Q$, up to multiplication by a positive real number.
\end{itemize}
\end{enumerate}
\end{theo}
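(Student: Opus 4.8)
The plan is to leverage Theorem~\ref{fQbij} as a black box and then translate its three conditions (convergence of cascade series, uniqueness of the line of left-fixed vectors, and summability of $\sum v_{\alpha s}\kappa_{\alpha s}$) into the recurrence language, using the stability hypothesis to control $Q$. The implication $(1)\Rightarrow(2)$ is trivial. The core work is the equivalence $(2)\Leftrightarrow(3)$, and the (somewhat surprising) implication $(2)\Rightarrow(1)$, i.e.\ that a stable VLMC can never have more than one invariant probability measure.

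First I would prove $(3)\Rightarrow(1)$. Assume $(c_1)$, $(c_2)$, $(c_3)$. By $(c_1)$ the matrix $Q$ has finite entries; moreover I claim $Q$ is row-stochastic. This needs a short argument: $(c_1)$ (convergence of cascade series) together with positivity of cascades forces $\kappa_{\alpha s}(k)\to 0$ as $k\to\infty$ (tail of a convergent series), so Proposition~\ref{pro:stochasticity} applies and $Q$ is stochastic; by Proposition~\ref{pro:irreducibility} it is irreducible. Now $(c_2)$ says $Q$ is recurrent, so by classical countable Markov chain theory there is a nonnegative left-fixed vector $\left( v_{\alpha s}\right)$, unique up to a positive scalar. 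Condition $(c_3)$ says $\sum_{\alpha s} v_{\alpha s}\kappa_{\alpha s}<+\infty$; since this sum is finite and the $\kappa_{\alpha s}$ are strictly positive (non-nullness), we may rescale $\left( v_{\alpha s}\right)$ so that $\sum v_{\alpha s}\kappa_{\alpha s}=1$. Then Theorem~\ref{fQbij}(ii) produces a $U$-stationary probability measure. Uniqueness follows because the recurrent irreducible $Q$ has a one-dimensional cone of nonnegative left-fixed vectors, hence the normalization $\sum v_{\alpha s}\kappa_{\alpha s}=1$ pins down a unique one, and Theorem~\ref{fQbij}(ii) is a bijection; this simultaneously gives $(3)\Rightarrow(1)$ and the uniqueness in $(2)\Rightarrow(1)$.

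Next I would prove $(2)\Rightarrow(3)$. Assume a stationary probability measure $\pi$ exists. By Theorem~\ref{fQbij}(i), the cascade series converge, giving $(c_1)$, and $f(\pi)=\left(\pi(\alpha s\rond R)\right)_{\alpha s\in\rond S}$ satisfies $\sum_{\alpha s}\pi(\alpha s\rond R)\kappa_{\alpha s}=1$, which will be $(c_3)$ once we know $f(\pi)$ is (a scalar multiple of) the left-fixed vector. From $(c_1)$ and Proposition~\ref{pro:stochasticity}/\ref{pro:irreducibility} as above, $Q$ is stochastic and irreducible, and $f(\pi)$ is a left-fixed vector with nonnegative entries by Theorem~\ref{fQbij}(ii); by Lemma~\ref{lem:pineq0}(i) it has all entries strictly positive, in particular it is nonzero and summable against $\kappa$. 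The remaining point, $(c_2)$, i.e.\ recurrence of $Q$: an irreducible stochastic matrix that admits a nonnegative nonzero left-fixed vector $v$ with $\sum v_{\alpha s}<\infty$ is positive recurrent, hence recurrent. One needs $\sum_{\alpha s}\pi(\alpha s\rond R)<\infty$, which holds because the cylinders $\alpha s\rond R$ for distinct $\alpha s\in\rond S$ are not disjoint in general, so a direct $\le 1$ bound is not automatic; instead I would argue that $\sum_{\alpha s}\pi(\alpha s\rond R)\kappa_{\alpha s}=1$ together with $\kappa_{\alpha s}\ge$ (the single-term $\kappa_{\alpha s}(1)=\casc(\alpha s)=1$) forces $\sum_{\alpha s}\pi(\alpha s\rond R)\le 1<\infty$. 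Hence $Q$ has a summable invariant measure, so it is positive recurrent, giving $(c_2)$. This closes $(2)\Rightarrow(3)$.

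The main obstacle I anticipate is the careful handling of the non-disjointness of the cylinders $\alpha s\rond R$ and the exact bookkeeping needed to see that the finite-mass invariant vector of $Q$ obtained from $\pi$ is genuinely summable and not merely $\sigma$-finite; the observation $\kappa_{\alpha s}\ge\casc(\alpha s)=1$ (because $\casc(\alpha s)=1$ for every $\alpha s\in\rond S$, as noted after Definition~\ref{def:cascade}, and $\alpha s$ is itself among the contributing finite contexts in the stable case by Lemma~\ref{lem:contextStableTree}) is the crucial quantitative input that converts the equality~\eqref{masseTotale} into the summability bound. A secondary technical point is to double-check that ``recurrence of the irreducible stochastic matrix $Q$'' is equivalent to ``existence of a nonnegative nonzero left-fixed vector'' only up to the positive-recurrence refinement: recurrence alone gives a unique (up to scalar) left-fixed vector which may fail to be summable, so the logical flow must keep $(c_2)$ and $(c_3)$ as separate conditions, exactly as the statement does, and never conflate ``recurrent'' with ``positive recurrent.''
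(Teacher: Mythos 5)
Your proposal is correct and follows essentially the same route as the paper: both directions rest on Theorem~\ref{fQbij} combined with the stochasticity and irreducibility of $Q$ (Propositions~\ref{pro:stochasticity} and~\ref{pro:irreducibility}), and the key quantitative step in $(2)\Rightarrow(3)$ — namely $\kappa_{\alpha s}\geq\casc(\alpha s)=1$ via Lemma~\ref{lem:contextStableTree}, converting Equality~\eqref{masseTotale} into summability of the left-fixed vector and hence positive recurrence — is exactly the paper's argument. The only cosmetic difference is that you worry about non-disjointness of the cylinders $\alpha s\rond R$, which in the stable case is moot since the $\alpha s$ are themselves contexts, but your workaround is precisely the inequality the paper uses anyway.
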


A proof of Theorem~\ref{th:stable} is given in Section~\ref{sec:proofsStable}, page~\pageref{proof:stable}.
Notice that Theorem~\ref{th:stable} is a direct consequence of Theorem~\ref{fQbij} and of the fact that $Q$ is stochastic.
In the present article, the stochasticity of $Q$ is deduced from its interpretation as the transition matrix of some semi-Markov chain (Proposition~\ref{pro:compareSM}).
Notice, as already mentioned just before Proposition~\ref{pro:stochasticity}, that this stochasticity can also be proved by a direct combinatorial proof.
In this sense, Theorem~\ref{th:stable} can be understood as being independent from the fact that the process $\left( Z_n\right) _n$ of successive context alpha-LIS of the VLMC $\left( U_n\right) _n$ is a semi-Markov chain (our current notations).

\begin{rem}
Actually, as shown in the end of the proof, when $Q$ is recurrent and when the series $\sum _{\alpha s\in\rond S} v_{\alpha s}\kappa_{\alpha s}$ converges, then $Q$ is positive recurrent.
Furthermore, all the $v_{\alpha s}$ are then positive, thanks to Lemma~\ref{lem:pineq0}.
\end{rem}

\begin{rem}
There exist non-null stable probabilised context trees such that $(c_1)$ and $(c_2)$ are fulfilled, but not $(c_3)$, hence with no stationary probability measure. 
Here is an example based on a left-comb of left-combs, already introduced in Remark~\ref{rem:realisation}.

\vskip 5pt
Let $v_p=\frac 1{p+1}-\frac 1{p+2}$ and $R_p=\sum _{q\geq p}v_q=\frac 1{p+1}$ for every $p\geq 0$
(more generally, on can build similar examples based on positive sequences $\left( v_p\right) _p$ such that $\sum _{p\geq 0}v_p=1$ and $\sum _ppv_p$ diverges).
Define $S$ by
\[
S(x)=\sum _{q\geq 0}v_qx^{\frac 1{q+1}}.
\]
This series is normally convergent on the real interval $[0,1]$ so that $S$ is continuous on $[0,1]$ and satisfies $S(0)=0$ and $S(1)=1$.
Furthermore, $S$ is derivable and increasing on $[0,1]$ since the derived series converges normally on any compact subset of $]0,1]$.
Finally, $S(x)\geq v_qx^{\frac 1{q+1}}$ on $[0,1]$ for every $q\geq 0$.
Consequently, for every $t>0$, there exists $C_t>0$ such that
\begin{equation}
\label{S-1plate}
\forall x\in [0,1],~~S^{-1}(x)\leq C_tx^t.
\end{equation}
Take now the probabilised left-comb of left-combs defined by the relations (see notations in Remark~\ref{rem:realisation})
\[
\forall q,p\geq 0,~c_{q,p}=S^{-1}\left( R_p\right)^{\frac 1{q+1}} .
\]
Note that these equations fully define the corresponding VLMC because the probabilities $q_{0^p10^q1}$ are characterized by these $c_{q,p}$ \emph{via} the equalities $q_{0^p10^q1}(0)=c_{q,p+1}/c_{q,p}$.
The definition of $S$ implies that $\sum _{q\geq 0}v_qc_{q,p}=R_p$ for every $p\geq 0$, which precisely means that $v=vQ$
(the row-vector $v$ is a left-fixed vector for $Q$).
Besides, for any $q\geq 0$, applying~\eqref{S-1plate} for $t=2(q+1)$ leads to inequalities
\[
\forall p\geq 0,~c_{q,p}\leq C_{2(q+1)}\left(\frac 1{p+1}\right)^2.
\]

Thus, the positive sequences $\left( v_q\right) _q$ and $\left( c_{q,p}\right) _{p,q}$ satisfy the following properties.
\begin{enumerate}
\item $\forall q\geq 0, \sum_p c_{q,p}<\infty$,
\item $\forall p\geq 0, \sum_{q\geq 0} v_qc_{q,p}=\sum_{q\geq p}v_q$,
\item $\sum_q v_q<\infty$,
\item $\sum_{q,p\geq 0}v_qc_{q,p}=+\infty$.
\end{enumerate}
In terms of the VLMC, with general notations of Section~\ref{subsec:cascade}, these properties translate into:
\begin{enumerate}
\item the cascade series converge (for $\alpha s=10^q1$, $\kappa_{\alpha s}=\sum_p c_{q,p}$),
\item $v=\left( v_{\alpha s}\right) _{\alpha s\in\rond S}$ is a left-fixed vector for $Q$,
\item $\sum_{\alpha s\in\rond S} v_{\alpha s}<\infty$,
\item $\sum_{\alpha s\in\rond S} v_{\alpha s}\kappa_{\alpha s}=+\infty$.
\end{enumerate}
	Therefore, $(c_1)$ is fulfilled and $(c_3)$ is not. Finally, the stability of the context tree and the convergence of cascade series imply the stochasticity of $Q$  by Proposition~\ref{pro:stochasticity}, which force the vector $u=(1,1,\dots,1,\dots)^{\top}$ to be a right-fixed vector for $Q$. Moreover, $\langle v,u\rangle =\sum_{\alpha s\in\rond S} v_{\alpha s}<\infty$. Remarking that $Q$ is aperiodic (for it is strictly positive) and using Remark 7.1.17 p. 207 of \cite{kitchens/98}, this implies the positive recurrence of $Q$. 
\end{rem}

\begin{rem}
	One may wonder whether $(c_1)\Longrightarrow (c_2)$. The answer is no. There exists a VLMC defined by a stable tree such that the cascade series converge and the matrix $Q$ is transient.

To build such an example, recall that, by Remark \ref{rem:realisation}, any stochastic matrix with strictly positive coefficients can be realized as the matrix $Q$ of a stable tree (take for example a left-comb of left-combs). The matrix $A=(a_{i,j})_{i\geqslant 1,j\geqslant 1}$ defined by
\begin{itemize}
\item $a_{i,i+1}=1-\frac{1}{(i+1)^2}$ for all $i\geqslant 1$,
\item $a_{i,j}=\frac{1}{(i+1)^22^{j-1}}$ if $j\geqslant i+2$,
\item $a_{i,j}=\frac{1}{(i+1)^22^{i+1-j}}$ if $j\leqslant i$
\end{itemize}
is stochastic and transient.
Indeed, if one associates a Markov chain to the stochastic matrix $A$ and if one denotes by $T_1$ the return time to the first state,
\[
\Proba(T_1=\infty)\geqslant\prod_{i\geqslant 1}a_{i,i+1}\geqslant\prod_{i\geqslant 2}\left(1-\frac{1}{i^2}\right) =\frac12.
\]
Consider now the VLMC defined by a left-comb of left-combs probabilised in the unique way such that $Q_{10^q1,10^p1}=a_{q,p}$ for every $(p,q)$, like in Remark~\ref{rem:realisation}.
A simple computation shows that the series of cascade converges (geometrically).
Simultaneously, since $Q$ is transient, Theorem~\ref{th:stable} shows that the VLMC admits no stationary probability measure.
\end{rem}

Notice that Theorem~\ref{th:stable} also provides results for non-stable trees as the following corollary shows, using Remark~\ref{rem:stabilized}.

\begin{cor}
	\label{cor:stabilized}
	Let $\left(\rond T,q\right)$ be a non-null probabilised context tree. Suppose that $\rond T$ is stabilizable and denote by $\widehat{\rond T}$ its stabilized.
	Using the notations of Remark~\ref{rem:stabilized}, if $\left(\widehat{\rond T},\widehat{q}\right)$ satisfies the conditions of Theorem~\ref{th:stable}, then the VLMC associated with $\left(\rond T,q\right)$ admits a unique invariant probability measure. If not, it does not admit any invariant probability measure.
	In particular, a VLMC associated to a stabilizable context tree never admits several stationary probability measures.
\end{cor}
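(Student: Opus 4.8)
The plan is to transport the whole question to the stabilized tree and then invoke Theorem~\ref{th:stable} there. The starting point is Remark~\ref{rem:stabilized}: since $\rond T$ is stabilizable, $\widehat{\rond T}=\bigcup_{n\in\g N}\sigma^n(\rond T)$ is a context tree, it is stable (it is the smallest stable context tree containing $\rond T$), and the probabilised context trees $(\rond T,q)$ and $(\widehat{\rond T},\widehat q)$ give rise to the \emph{same} $\rond R$-valued Markov chain $(U_n)_n$, because they have identical transition probabilities at every right-infinite word. Consequently the VLMC associated with $(\rond T,q)$ and the VLMC associated with $(\widehat{\rond T},\widehat q)$ have exactly the same set of stationary probability measures; in particular the former admits a unique (resp.\ at least one, resp.\ no) invariant probability measure if and only if the latter does.

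Next I would check that $(\widehat{\rond T},\widehat q)$ satisfies the hypotheses of Theorem~\ref{th:stable}, namely that it is a non-null stable probabilised context tree. Stability has just been recorded. Non-nullness is inherited: for every context $c$ of $\widehat{\rond T}$ and every $\alpha\in\rond A$, one has $\widehat q_c(\alpha)=q_{\pref(c)}(\alpha)\neq 0$ (with $\pref$ taken relative to $\rond T$, as in Remark~\ref{rem:stabilized}), since $(\rond T,q)$ is non-null.

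It then suffices to apply Theorem~\ref{th:stable} to $(\widehat{\rond T},\widehat q)$: its three assertions are equivalent, and assertion $3$ is precisely the statement that $(\widehat{\rond T},\widehat q)$ satisfies $(c_1)$, $(c_2)$ and $(c_3)$. Hence, if these conditions hold, assertion $1$ gives a unique stationary probability measure for the VLMC associated with $(\widehat{\rond T},\widehat q)$, hence --- by the first paragraph --- for the VLMC associated with $(\rond T,q)$; if one of the conditions fails, assertion $2$ fails too, so there is no stationary probability measure at all. The ``in particular'' claim is then the equivalence ``assertion $1\Leftrightarrow$ assertion $2$'' read back on $(\rond T,q)$: a VLMC attached to a stabilizable context tree has either exactly one or exactly zero invariant probability measures.

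The argument has no genuine obstacle; the only point deserving care is the identification of the two VLMC and, through it, of their stationary measures, which is exactly the content of Remark~\ref{rem:stabilized}, together with the routine verification that non-nullness is preserved when passing to the stabilized probabilised tree.
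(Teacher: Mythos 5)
Your proof is correct and follows exactly the route the paper intends: the corollary is stated as an immediate consequence of Remark~\ref{rem:stabilized} (identification of the two VLMC, hence of their stationary measures) combined with Theorem~\ref{th:stable} applied to the stabilized tree, and the paper gives no further argument. Your additional verification that non-nullness passes to $(\widehat{\rond T},\widehat q)$ is a worthwhile detail the paper leaves implicit.
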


\vskip 10pt
When the matrix $Q$ is finite dimensional, stochastic and irreducible, it admits a unique left-fixed vector up to scalar multiplication.
This leads to the following theorem.

\begin{theo}[finite number of alpha-LIS]
\label{cor:finite}
Let $(\rond T,q)$ be a non-null probabilised context tree and $U=\left( U_n\right) _n$ be the VLMC it defines.
Assume that $\rond T$ is stable and that $\#\rond S<\infty$.
Then (i), (ii) and (iii) are equivalent.

(i) $U$ admits at least a stationary probability measure.

(ii) $U$ admits a unique stationary probability measure.

(iii) The cascade series~\eqref{convCasc} converge.

Moreover, whenever one of the previous assertion is true then, for every distribution of $U_0$ that does not charge any infinite context, for every finite word $w$,
\[
\Proba\left( U_n\in w\rond R\right)
\tooSous{n\to\infty}
\pi\left( w\rond R\right)
\]
where $\pi$ denotes the unique $U$-invariant probability measure.
\end{theo}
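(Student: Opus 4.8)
The plan is to derive the equivalences (i)$\Leftrightarrow$(ii)$\Leftrightarrow$(iii) essentially for free from the already-established machinery, and then to spend the real effort on the convergence-in-distribution statement. For the equivalences: since $\rond T$ is stable, Proposition~\ref{pro:stochasticity} and Proposition~\ref{pro:irreducibility} tell us that as soon as the cascade series converge, $Q$ is a finite, irreducible, row-stochastic matrix. A finite irreducible stochastic matrix is automatically (positive) recurrent, so condition $(c_2)$ of Theorem~\ref{th:stable} is free; and it has a unique non-negative left-fixed vector $(v_{\alpha s})_{\alpha s\in\rond S}$ up to scalar, with all entries positive, so $\sum_{\alpha s\in\rond S} v_{\alpha s}\kappa_{\alpha s}$ is a finite sum of finite terms, giving $(c_3)$ for free as well. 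Hence $(c_1)\Rightarrow(c_1)\wedge(c_2)\wedge(c_3)$, and Theorem~\ref{th:stable} yields (iii)$\Rightarrow$(ii)$\Rightarrow$(i). The implication (i)$\Rightarrow$(iii) is exactly Theorem~\ref{fQbij}(i). So the first block of the proof is just a few lines of bookkeeping.

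For the limit statement, suppose the cascade series converge and let $\pi$ be the unique invariant probability measure. First I would reduce to proving $\Proba(U_n\in w\rond R)\to\pi(w\rond R)$ when $U_0$ is \emph{deterministic}, equal to some fixed right-infinite word whose \emph{cont} is a finite context (such words form a set of full measure under any initial law not charging infinite contexts, and one integrates over the initial law at the end using dominated convergence). So fix such a starting point; then $C_0=\pref(U_0)\in\rond C^f$, and by the renewal discussion in Section~\ref{subsec:compareSM} all later $C_n$ stay finite. Now use the induced semi-Markov chain: by Proposition~\ref{pro:compareSM} the alpha-LIS process $(Z_n)$ is an $\rond S$-valued semi-Markov chain whose underlying Markov chain $(J_n)$ has transition matrix $Q$, finite and irreducible, and by Lemma~\ref{lem:context_markov} the finite-context chain $(C_n)$ is irreducible and aperiodic. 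The key point is that the event $\{U_n\in w\rond R\}$ is determined by $C_n$ together with the (bounded amount of) extra information recording which context of its alpha-LIS class $C_n$ currently sits in — more precisely, $U_n\in w\rond R$ depends only on $C_n$ and on at most $|w|$ letters of $U_0$ beyond the current context, which after the first renewal time is itself encoded in the finite data $(C_n, \text{position within the descent since the last alpha-LIS})$.

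The cleanest route is: show that the $\rond C^f$-valued Markov chain $(C_n)_{n\ge 0}$ is positive recurrent with stationary distribution $\mu$ given by $\mu(c)=v_{\alpha_c s_c}\casc(c)/\big(\sum_{\alpha s}v_{\alpha s}\kappa_{\alpha s}\big)$ — one checks $\mu Q^{\text{(context)}}=\mu$ directly from the cascade relations and from $vQ=v$, and that $\mu$ is a probability measure precisely because the normalizing sum is finite. Irreducibility and aperiodicity (Lemma~\ref{lem:context_markov}) then give $\Proba(C_n=c)\to\mu(c)$ for every finite context $c$, from any finite-context start. Finally, translate this back: using Cascade Formula~\eqref{cascade3} and the identification of $\pi$ via its values on alpha-LIS cylinders (the bijection $f$ of Theorem~\ref{fQbij}), one gets $\pi(w\rond R)=\sum_{c}\mu(c)\,\indic\{U\in w\rond R\mid C_n=c\}$-type expressions; more carefully, decompose $\Proba(U_n\in w\rond R)$ according to the value of $C_n$ and, for each fixed $c$, according to which letters have been appended since the last visit of the alpha-LIS, and pass to the limit term by term, the number of terms being finite since $\#\rond S<\infty$ and $|w|$ is fixed.

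The main obstacle is precisely this last translation step: $(C_n)$ alone does not record enough of $U_n$ to decide membership in $w\rond R$ for words $w$ longer than the current context (as emphasized in Remark~\ref{rem:semiMarkov}, the VLMC is strictly richer than its context/alpha-LIS process), so one must augment the state with the finite ``history since the last renewal'' and argue that this augmented chain is still a finite (or at least positive-recurrent, aperiodic) Markov chain whose stationary law pushes forward to $\pi$. Checking aperiodicity and positive recurrence of the augmented chain, and verifying that its stationary measure is exactly the one dictated by $\pi$, is where the care is needed; the finiteness of $\rond S$ and the convergence of the cascade series are what keep everything summable and the limits interchangeable.
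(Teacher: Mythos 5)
Your treatment of the equivalences (i)$\Leftrightarrow$(ii)$\Leftrightarrow$(iii) is correct and is essentially the paper's argument: finiteness of $\rond S$ together with stochasticity (Proposition~\ref{pro:stochasticity}) and irreducibility (Proposition~\ref{pro:irreducibility}) of $Q$ make conditions $(c_2)$ and $(c_3)$ of Theorem~\ref{th:stable} automatic once the cascade series converge, and (i)$\Rightarrow$(iii) is Theorem~\ref{fQbij}(i). The first half of your convergence argument also matches the paper: the chain induced by $\left(C_n\right)_n$ on $\rond C^f$ is irreducible and aperiodic (Lemma~\ref{lem:context_markov}) and admits $\pi_{\rond C^f}$ as an invariant probability measure, whence $\Proba_\nu\left(U_n\in c\rond R\right)\to\pi\left(c\rond R\right)$ for every finite context $c$ and every initial law $\nu$ not charging infinite contexts.

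The gap is exactly where you flag it: the passage from context cylinders to arbitrary cylinders $w\rond R$. You correctly observe that $C_n$ does not determine the event $\left\{U_n\in w\rond R\right\}$ when $w$ extends beyond the current context, but your remedy --- augmenting the state with the ``history since the last renewal'' and re-establishing irreducibility, aperiodicity and positive recurrence of that augmented chain (which is countable, not finite) --- is left entirely unexecuted, and it is not what is needed. The paper's device is a backward-in-time cascade identity: writing $w=\beta_1\cdots\beta_{p_w}\alpha_ws_w$, for noninternal $w$ and $n\geq p_w$ one has exactly $\Proba_\nu\left(U_n\in w\rond R\right)=\casc(w)\,\Proba_\nu\left(U_{n-p_w}\in\alpha_ws_w\rond R\right)$, because every $\sigma^k(w)$, $1\leq k\leq p_w$, is noninternal by maximality of the LIS, so each appended letter has a conditional probability pinned down by the prefix alone; invoking Lemma~\ref{lem:contextStableTree} to identify the alpha-LIS cylinder with a context cylinder then reduces the claim to the convergence already proved for contexts, with no state augmentation at all. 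For internal $w$ one writes $\Proba_\nu\left(U_n\in w\rond R\right)=\sum_{c=w\cdots}\Proba_\nu\left(U_n\in c\rond R\right)$ and must justify exchanging the limit with this \emph{infinite} sum: the paper splits it into the contexts with $p_c\leq n$, handled by dominated convergence with the convergent cascade series as dominating family (summed over the finitely many alpha-LIS), and a tail $\beta_n(w)$ shown to vanish by a total-mass argument over all words of a fixed length. Your sketch asserts one can ``pass to the limit term by term, the number of terms being finite since $\#\rond S<\infty$ and $|w|$ is fixed'', but the set of contexts extending an internal word is in general infinite (only the set of their alpha-LIS is finite), so this interchange is a second step your proposal does not actually justify.
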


The proof of Theorem~\ref{cor:finite} is made in Section~\ref{sec:proofs}, page~\pageref{proof:finite}.

\begin{rem}[Case of finite trees]
\label{rem:finite}
Assume that  $U$ is a non-null VLMC defined by a finite context tree. One gets an equivalent process $\widehat U$ by properly probabilising the \emph{stabilized} context tree -- see Remark~\ref{rem:stabilized}.
Since there are finitely many contexts, all the cascade series converge -- they are all finite sums.
Then, Theorem~\ref{cor:finite} applies, showing that $\widehat U$ -- thus $U$ -- always admits a unique stationary probability measure.
This is not surprising because in that case,  $U$ can be seen as an ordinary irreducible Markov chain whose order is the height of its context tree -- see Remark~\ref{rem:notMarkov}.
\end{rem}

The following example shows how one can apply Theorem~\ref{cor:finite}.

\begin{exa}
\label{exa:pgpd}
The so-called \emph{left-comb of right-combs} is particularly simple because if has only one context alpha-LIS.
The left-comb of right-combs augmented by a cherry stem, a variation of the former one, gets four context alpha-LIS.
Because of Theorem~\ref{cor:finite}, both corresponding VLMC have a (unique) stationary probability measure if and only if their cascade series converge.

\vskip 10pt
\begin{minipage}{0.4\textwidth}
\centering
\begin{tikzpicture}[scale=0.3]
\tikzset{
every leaf node/.style={draw,circle,fill},
every internal node/.style={draw,circle,scale=0.01}
}
\Tree [.{} 
[.{} 
[.{} 
[.{} 
[.{} 
[.{} 
[.{} 
[.{} 
[.{} 
\edge[line width=2pt,dashed];\node[fill=white,draw=white]{};
\edge[line width=2pt,dashed];\node[fill=white,draw=white]{};
]
[.{} 
{} 
\edge[line width=2pt,dashed];\node[fill=white,draw=white]{};
]
]
[.{} 
{} 
[.{} 
{} 
\edge[line width=2pt,dashed];\node[fill=white,draw=white]{};
]
]
]
[.{} 
{} 
[.{} 
{} 
[.{} 
{} 
\edge[line width=2pt,dashed];\node[fill=white,draw=white]{};
]
]
]
]
[.{} 
{} 
[.{} 
{} 
[.{} 
{} 
[.{} 
{} 
\edge[line width=2pt,dashed];\node[fill=white,draw=white]{};
]
]
]
]
]
[.{} 
{} 
[.{} 
{} 
[.{} 
{} 
[.{} 
{} 
[.{} 
{} 
\edge[line width=2pt,dashed];\node[fill=white,draw=white]{};
]
]
]
]
]
]
[.{} 
{} 
[.{} 
{} 
[.{} 
{} 
[.{} 
{} 
[.{} 
{} 
[.{} 
{} 
\edge[line width=2pt,dashed];\node[fill=white,draw=white]{};
]
]
]
]
]
]
]
[.{} 
{} 
[.{} 
{} 
[.{} 
{} 
[.{} 
{} 
[.{} 
{} 
[.{} 
{} 
[.{} 
{} 
\edge[line width=2pt,dashed];\node[fill=white,draw=white]{};
]
]
]
]
]
]
]
]
[.{} 
{} 
[.{} 
{} 
[.{} 
{} 
[.{} 
{} 
[.{} 
{} 
[.{} 
{} 
[.{} 
{} 
[.{} 
{} 
\edge[line width=2pt,dashed];\node[fill=white,draw=white]{};
]
]
]
]
]%
]
]
]
]
\end{tikzpicture}

\end{minipage}
\hskip 10pt
\begin{minipage}{0.58\textwidth}
The left-comb of right-combs, built on the alphabet $\set{0,1}$, is drawn on the left.
Its finite contexts are the $0^p1^q0$, $p\geq 0$, $q\geq 1$.
It has infinitely many infinite branches, namely the $0^p1^\infty$, $p\geq 0$.
This context tree is stable and all finite contexts have $10$ as an alpha-LIS.
The matrix~$Q$, which is thus $1$-dimensional, is reduced to~$(1)$.
The convergence of the unique cascade series consists in the summability of the double sum
\[
\sum _{p\geq 0,q\geq 1}~\prod _{j=0}^{p-1}q_{0^j1^q0}(0)\prod _{k=1}^{q-1}q_{1^k0}(1).
\]
\end{minipage}

\vskip 10pt
\begin{minipage}{0.6\textwidth}
The left-comb of right-combs with a cherry stem consists in simply replacing the context $10$ of the preceding tree by the cherries $100$ and $101$.
The tree is still stable and it has four context alpha-LIS, as resumed in the array.
\begin{center}
\begin{tabular}{r|l}
alpha-LIS $\alpha s$&contexts having $\alpha s$ as an alpha-LIS\\
\hline
$100$&$100$\\
$101$&$101$\\
$010$&$0^p10$, $p\geq 1$\\
$110$&$0^p1^q0$, $p\geq 0$, $q\geq 2$
\end{tabular}
\end{center}
\end{minipage}
\begin{minipage}{0.4\textwidth}
\centering
\begin{tikzpicture}[scale=0.3]
\tikzset{
every leaf node/.style={draw,circle,fill},
every internal node/.style={draw,circle,scale=0.01}
}
\Tree [.{} 
[.{} 
[.{} 
[.{} 
[.{} 
[.{} 
[.{} 
[.{} 
[.{} 
\edge[line width=2pt,dashed];\node[fill=white,draw=white]{};
\edge[line width=2pt,dashed];\node[fill=white,draw=white]{};
]
[.{} 
{} 
\edge[line width=2pt,dashed];\node[fill=white,draw=white]{};
]
]
[.{} 
{} 
[.{} 
{} 
\edge[line width=2pt,dashed];\node[fill=white,draw=white]{};
]
]
]
[.{} 
{} 
[.{} 
{} 
[.{} 
{} 
\edge[line width=2pt,dashed];\node[fill=white,draw=white]{};
]
]
]
]
[.{} 
{} 
[.{} 
{} 
[.{} 
{} 
[.{} 
{} 
\edge[line width=2pt,dashed];\node[fill=white,draw=white]{};
]
]
]
]
]
[.{} 
{} 
[.{} 
{} 
[.{} 
{} 
[.{} 
{} 
[.{} 
{} 
\edge[line width=2pt,dashed];\node[fill=white,draw=white]{};
]
]
]
]
]
]
[.{} 
{} 
[.{} 
{} 
[.{} 
{} 
[.{} 
{} 
[.{} 
{} 
[.{} 
{} 
\edge[line width=2pt,dashed];\node[fill=white,draw=white]{};
]
]
]
]
]
]
]
[.{} 
{} 
[.{} 
{} 
[.{} 
{} 
[.{} 
{} 
[.{} 
{} 
[.{} 
{} 
[.{} 
{} 
\edge[line width=2pt,dashed];\node[fill=white,draw=white]{};
]
]
]
]
]
]
]
]
[.{} 
[.{} 
{} 
{} 
]
[.{} 
{} 
[.{} 
{} 
[.{} 
{} 
[.{} 
{} 
[.{} 
{} 
[.{} 
{} 
[.{} 
{} 
\edge[line width=2pt,dashed];\node[fill=white,draw=white]{};
]
]
]
]
]%
]
]
]
]
\end{tikzpicture}

\end{minipage}

\vskip 5pt
In this last example, the convergence of the cascade series is equivalent to the finiteness of both sums
\[
\kappa _{010}=\sum _{p\geq 1}\prod _{k=1}^{p-1}q_{0^k10}(0)
{\rm ~~and~~}
\kappa _{110}=\sum _{p\geq 0,~q\geq 2}~\prod _{j=0}^{p-1}q_{0^j1^q0}(0)\prod _{k=2}^{q-1}q_{1^k0}(1).
\]

\end{exa}

\subsection{A semi-Markov chain is a stable VLMC}
\label{subsec:resultsSM}

In this section, it is shown that any semi-Markov chain on a finite state space is a VLMC associated with some particular infinite stable probabilised context tree.
Consequently, one deduces from Theorem \ref{cor:finite} a necessary and sufficient condition for a non-null semi-Markov chain to admit a limit distribution. This condition already appears in \cite{barbu/limnios/08}.

\begin{defi}
\label{def:bcomb}
If $b\geq 2$, the \emph{$b$-comb}
 is the context tree on an alphabet $\rond A$ of cardinality $b$ having $\set{\alpha ^k\beta :\alpha,\beta\in\rond A, \alpha\neq\beta,~k\geq 1}$ as a set of finite contexts.
\end{defi}
\begin{figure}[h]
\begin{center}
\newcommand{\segGeom}[5]{\draw [#5] (0,0) plot[domain=0:#3]({#1+\x*cos(#4)},{#2+\x*sin(#4)});}
\newcommand{\pointCart}[4]{\fill [#4] (0,0) plot [domain=0:360]({#1+#3*cos(\x)},{#2+#3*sin(\x)});}
\newcommand{\pointGeom}[6]{\fill [#6] (0,0) plot [domain=0:360]({#1+#3*cos(#4)+#5*cos(\x)},{#2+#3*sin(#4)+#5*sin(\x)});}

\begin{tikzpicture}[scale=0.8]
\newcommand{\gdePenteN}{1.8}
\newcommand{\gdePenteE}{0.6}
\newcommand{\gdePenteW}{-\gdePenteE}
\newcommand{\gdePenteS}{-\gdePenteN}
\newcommand{\noeud}[2]{-#1*#2,-#1}
\newcommand{\absNoeud}[2]{-#1*#2}
\newcommand{\titePenteNE}{1}
\newcommand{\titePenteNW}{0.6}
\newcommand{\titePenteNS}{0.2}
\newcommand{\titePenteEN}{0.9}
\newcommand{\titePenteEW}{0.1}
\newcommand{\titePenteES}{-0.3}
\newcommand{\titePenteWN}{-\titePenteES}
\newcommand{\titePenteWE}{-\titePenteEW}
\newcommand{\titePenteWS}{-\titePenteEN}
\newcommand{\titePenteSN}{-\titePenteNS}
\newcommand{\titePenteSE}{-\titePenteNW}
\newcommand{\titePenteSW}{-\titePenteNE}
\newcommand{\rayon}{0.1}
\newcommand{\feuille}[3]{
\draw (\noeud{#2}{#1})--++(-#3,-1);
\fill (\absNoeud{#2}{#1}-#3,-#2-1) circle(\rayon);
}
\draw (0,0)--(\noeud{3.4}{\gdePenteN});
\draw (0,0)--(\noeud{3.4}{\gdePenteE});
\draw (0,0)--(\noeud{3.4}{\gdePenteW});
\draw (0,0)--(\noeud{3.4}{\gdePenteS});
\draw [dashed] (\noeud{3.4}{\gdePenteN})--(\noeud{4.1}{\gdePenteN});
\draw [dashed] (\noeud{3.4}{\gdePenteE})--(\noeud{4.3}{\gdePenteE});
\draw [dashed] (\noeud{3.4}{\gdePenteW})--(\noeud{4.3}{\gdePenteW});
\draw [dashed] (\noeud{3.4}{\gdePenteS})--(\noeud{4.1}{\gdePenteS});
\feuille{\gdePenteN}{1}{\titePenteNE}
\feuille{\gdePenteN}{1}{\titePenteNW}
\feuille{\gdePenteN}{1}{\titePenteNS}
\feuille{\gdePenteE}{1}{\titePenteEN}
\feuille{\gdePenteE}{1}{\titePenteEW}
\feuille{\gdePenteE}{1}{\titePenteES}
\feuille{\gdePenteW}{1}{\titePenteWN}
\feuille{\gdePenteW}{1}{\titePenteWE}
\feuille{\gdePenteW}{1}{\titePenteWS}
\feuille{\gdePenteS}{1}{\titePenteSN}
\feuille{\gdePenteS}{1}{\titePenteSE}
\feuille{\gdePenteS}{1}{\titePenteSW}
\feuille{\gdePenteN}{2}{\titePenteNE}
\feuille{\gdePenteN}{2}{\titePenteNW}
\feuille{\gdePenteN}{2}{\titePenteNS}
\feuille{\gdePenteE}{2}{\titePenteEN}
\feuille{\gdePenteE}{2}{\titePenteEW}
\feuille{\gdePenteE}{2}{\titePenteES}
\feuille{\gdePenteW}{2}{\titePenteWN}
\feuille{\gdePenteW}{2}{\titePenteWE}
\feuille{\gdePenteW}{2}{\titePenteWS}
\feuille{\gdePenteS}{2}{\titePenteSN}
\feuille{\gdePenteS}{2}{\titePenteSE}
\feuille{\gdePenteS}{2}{\titePenteSW}
\feuille{\gdePenteN}{3}{\titePenteNE}
\feuille{\gdePenteN}{3}{\titePenteNW}
\feuille{\gdePenteN}{3}{\titePenteNS}
\feuille{\gdePenteE}{3}{\titePenteEN}
\feuille{\gdePenteE}{3}{\titePenteEW}
\feuille{\gdePenteE}{3}{\titePenteES}
\feuille{\gdePenteW}{3}{\titePenteWN}
\feuille{\gdePenteW}{3}{\titePenteWE}
\feuille{\gdePenteW}{3}{\titePenteWS}
\feuille{\gdePenteS}{3}{\titePenteSN}
\feuille{\gdePenteS}{3}{\titePenteSE}
\feuille{\gdePenteS}{3}{\titePenteSW}
\end{tikzpicture}
\end{center}
\caption{
\label{fig:bcomb}
The $b$-comb for $b=4$}
\end{figure}

\begin{theo}
\label{semiMarkovIsVLMC}
Let $b$ be an integer, $b\geq 2$.
Every semi-Markov chain with true jumps on a state space having $b$ elements is
the process of initial letters of a VLMC on the $b$-comb.
\end{theo}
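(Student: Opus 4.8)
The plan is to realize the given semi-Markov chain as the sequence of \emph{runs} of the initial-letter process of a suitably probabilised $b$-comb, each sojourn of the chain corresponding to one run. The mechanism behind this is that in the $b$-comb the \emph{cont} of a right-infinite word $u$ whose left end reads $\alpha^k\beta\cdots$ with $\beta\ne\alpha$ is exactly $\alpha^k\beta$, so that $\pref(U_n)$ records the current letter $\alpha=X_n$, the length $k\geq1$ of the run of $\alpha$ at the left of $U_n$, and the letter $\beta$ just before that run. A transition then either prepends $\alpha$ again, lengthening the run (the \emph{cont} passing from $\alpha^k\beta$ to $\alpha^{k+1}\beta$), or prepends some $\gamma\ne\alpha$, which starts a fresh run of $\gamma$ with new \emph{cont} $\gamma\alpha$. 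This is precisely the dynamics of a semi-Markov chain with true jumps.

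Concretely, I would write $\bar p_\alpha(\ell)=\sum_{\gamma\ne\alpha}p_{\alpha,\gamma}(\ell)$ for the sojourn-time law in the state $\alpha$ and $\bar P_\alpha(\ell)=\sum_{m\geq\ell}\bar p_\alpha(m)$ for its tail; since $\sum_{\gamma,k}p_{\alpha,\gamma}(k)=1$ and $p_{\alpha,\alpha}(k)=0$ (true jumps), one has $\bar P_\alpha(1)=1$. I then probabilise the $b$-comb by setting, for every finite context $\alpha^k\beta$ with $\bar P_\alpha(k)>0$,
\[
q_{\alpha^k\beta}(\alpha)=\frac{\bar P_\alpha(k+1)}{\bar P_\alpha(k)},\qquad
q_{\alpha^k\beta}(\gamma)=\frac{p_{\alpha,\gamma}(k)}{\bar P_\alpha(k)}\quad(\gamma\ne\alpha),
\]
which sum to $1$ because $\bar P_\alpha(k)=\bar P_\alpha(k+1)+\bar p_\alpha(k)$; the definition is completed arbitrarily (say uniformly) on the contexts $\alpha^k\beta$ with $\bar P_\alpha(k)=0$, and $q_{\alpha^\infty}$ is set arbitrarily. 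Crucially $q_{\alpha^k\beta}$ does not depend on $\beta$ — as it must, the sojourn law depending only on the current state. For the initial law I would take $U_0=a\bigl(\gamma(a)\bigr)^\infty$ conditionally on $\{J_0=a\}$, with $\gamma(a)$ a fixed letter $\ne a$, so that $\pref(U_0)=a\,\gamma(a)$ encodes a run of length $1$ and $X_0=J_0$ has the prescribed law.

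It then remains to compare the finite-dimensional distributions of $(X_n)_{n\geq0}$ and of the semi-Markov chain $(Z_j)_{j\geq0}$, for which it suffices to compute $\Proba\bigl(X_0\cdots X_m=a_1^{\ell_1}\cdots a_r^{\ell_r}\bigr)$ for a word written in maximal runs ($a_i\ne a_{i+1}$, $\ell_i\geq1$, $\sum_i\ell_i=m+1$). Starting from the \emph{cont} $a_1\gamma(a_1)$, the probability of lengthening the first run from $1$ to $\ell_1$ telescopes to $\bar P_{a_1}(\ell_1)$ (using $\bar P_{a_1}(1)=1$); the ensuing transition to $a_2$ carries a factor $p_{a_1,a_2}(\ell_1)/\bar P_{a_1}(\ell_1)$ and lands on the \emph{cont} $a_2a_1$, after which the computation repeats verbatim (here the $\beta$-independence of the $q$'s is used); the last, possibly unfinished, run contributes $\bar P_{a_r}(\ell_r)$. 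Multiplying, and including the factor $\nu(a_1):=\Proba(J_0=a_1)$, gives $\nu(a_1)\prod_{i=1}^{r-1}p_{a_i,a_{i+1}}(\ell_i)\,\bar P_{a_r}(\ell_r)$, which is exactly the probability of the same cylinder read off the Markov renewal structure of $(Z_j)$. Since events of the form $\{X_0\cdots X_m=w\}$ generate the $\sigma$-algebra of the process, this yields $(X_n)\overset{d}{=}(Z_j)$.

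The conceptual content is just the run/sojourn correspondence; everything else is routine. The only point requiring care is bookkeeping for sojourn laws with bounded support, where $\bar P_\alpha(k)$ vanishes for large $k$: there $q_{\alpha^k\beta}(\alpha)=0$ at the largest index with $\bar P_\alpha(k)>0$, which blocks further lengthening, so that the contexts on which the formulas break down carry probability zero and all the identities above persist as equalities between zeros. The true-jump hypothesis is what makes ``begin a new run'' coincide with ``change letter'', so the bijection between runs of $(X_n)$ and sojourns of $(Z_j)$ is exact; it is harmless in view of the reduction to true jumps recalled after Definition~\ref{def:semiMarkov}.
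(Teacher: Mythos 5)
Your proposal is correct and follows essentially the same route as the paper: the probabilisation of the $b$-comb you write down is exactly Formula~\eqref{quelsQ!} (your $\bar P_\alpha(\ell)$ being the tail sum $\sum_{k\geq\ell}\sum_\gamma p_{\alpha,\gamma}(k)$), and the run/sojourn correspondence is the same mechanism the paper exploits. The only difference is cosmetic: the paper verifies the identification by building the auxiliary $\rond R$-valued Markov chain $V_n=Z_n\cdots Z_0(1+Z_0)Y_{-2}$ and matching one-step transition kernels, whereas you match finite-dimensional distributions by telescoping along runs; your explicit treatment of sojourn laws with bounded support (where $\bar P_\alpha(k)$ vanishes) is a small point the paper leaves implicit.
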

 
In the proof, placed in Section~\ref{sec:proofs} on page~\pageref{proof:semiMarkovIsVLMC}, the correspondance between the $b$-comb and the semi-Markov chain is made explicit. More precisely, the probability distributions at each context of the $b$-comb are given, such that the initial letter process of the VLMC has the same distribution as a given semi-Markov chain with $b$ states.

\begin{theo}
\label{th:CNSpourSM}
Let $\left( Z_n\right) _{n\geq 0}$ be a semi-Markov chain with true jumps on a finite state space $\rond E$.
Denote by $p=\left( p_{\alpha ,\beta}(k)\right) _{\alpha,\beta\in\rond E, k\geq 1}$ its semi-Markov kernel and assume that for any $\alpha,\beta\in\rond E, \alpha\not= \beta, k\geq 1$, $p_{\alpha ,\beta}(k)\not= 0$.
Then, the following properties are equivalent.
\begin{itemize}
\item[(i)]
$\left( Z_n\right) _{n\geq 0}$ admits a limit distribution.

\item[(ii)]
For every $\alpha\in\rond E$, the series
\[
m_{\alpha}:=\sum _{k\geq 1}k\left(\sum _{\gamma\in\rond E}p_{\alpha,\gamma}(k)\right)
\]
is convergent.

\end{itemize}
\end{theo}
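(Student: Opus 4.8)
The plan is to reduce Theorem~\ref{th:CNSpourSM} to Theorem~\ref{cor:finite} through the $b$-comb realization of Theorem~\ref{semiMarkovIsVLMC}. Put $b=\#\rond E\geq 2$. By Theorem~\ref{semiMarkovIsVLMC}, $\left( Z_n\right)$ is the initial-letter process $\left( X_n\right)$ of a VLMC $U=\left( U_n\right)$ built on the $b$-comb $\rond T_b$, with the family $q=\left( q_c\right)_c$ produced in that proof. First I would record the combinatorics of $\rond T_b$: it is stable (Proposition~\ref{pro:defstable}(i): $\sigma\left(\alpha^k\beta\right)=\alpha^{k-1}\beta\in\rond T_b$ for $k\geq 2$ and $\sigma(\alpha\beta)=\beta\in\rond T_b$), its strictly internal words are exactly the $\alpha^k$ with $k\geq 1$, so a finite context $c=\alpha^k\beta$ ($\alpha\neq\beta$, $k\geq 1$) has longest internal strict suffix $\beta$ and alpha-LIS $\alpha\beta$. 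Hence
\[
\rond S=\set{\alpha\beta:~\alpha,\beta\in\rond A,~\alpha\neq\beta}
\]
is finite (cardinality $b(b-1)$), and for every $\alpha\beta\in\rond S$ the finite contexts with that alpha-LIS are exactly the $\alpha^k\beta$, $k\geq 1$. Writing $\overline G_\alpha(k)=\sum_{l\geq k}\sum_{\gamma\in\rond E}p_{\alpha,\gamma}(l)$ for the tail of the sojourn-time law in state $\alpha$ (so $\overline G_\alpha(1)=1$, the jumps being true), the only letter that prolongs the current run forces $q_{\alpha^k\beta}(\alpha)=\overline G_\alpha(k+1)/\overline G_\alpha(k)$, while the construction gives $q_{\alpha^k\beta}(\gamma)=p_{\alpha,\gamma}(k)/\overline G_\alpha(k)$ for $\gamma\neq\alpha$ and lets the $q_{\alpha^\infty}$ be any non-null measures; the hypothesis $p_{\alpha,\gamma}(k)\neq 0$ for all $\alpha\neq\gamma$, $k\geq 1$, then makes $\left(\rond T_b,q\right)$ non-null.

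Next I would compute the cascade series. For $c=\alpha^k\beta$ we have $c=\alpha^{k-1}\cdot(\alpha\beta)$, so in Definition~\ref{def:cascade} $p_c=k-1$, each $\beta_i=\alpha$, and $\sigma^j(c)=\alpha^{k-j}\beta$ is itself a context for $0\leq j\leq k-1$; the cascade telescopes:
\[
\casc\left(\alpha^k\beta\right)=\prod_{j=1}^{k-1}q_{\alpha^{k-j}\beta}(\alpha)=\prod_{i=1}^{k-1}\frac{\overline G_\alpha(i+1)}{\overline G_\alpha(i)}=\frac{\overline G_\alpha(k)}{\overline G_\alpha(1)}=\overline G_\alpha(k).
\]
Summing over the contexts sharing the alpha-LIS $\alpha\beta$,
\[
\kappa_{\alpha\beta}=\sum_{k\geq 1}\casc\left(\alpha^k\beta\right)=\sum_{k\geq 1}\overline G_\alpha(k)=\sum_{k\geq 1}k\sum_{\gamma\in\rond E}p_{\alpha,\gamma}(k)=m_\alpha .
\]
Therefore the cascade series~\eqref{convCasc} converge if and only if $m_\alpha<\infty$ for every $\alpha\in\rond E$, i.e. if and only if~(ii) holds. (Here $Q$ is a $b(b-1)\times b(b-1)$ matrix, irreducible by Proposition~\ref{pro:irreducibility} and, when the cascades converge, stochastic by Proposition~\ref{pro:stochasticity}; so condition~(iii) of Theorem~\ref{cor:finite} is automatic and plays no role.)

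It then suffices to invoke Theorem~\ref{cor:finite} for $U$: (ii) $\Longleftrightarrow$ the cascades converge $\Longleftrightarrow$ $U$ admits a unique stationary probability measure $\pi$, and in that case $\Proba\left( U_n\in w\rond R\right)\to\pi\left( w\rond R\right)$ for every finite $w$ and every law of $U_0$ with $\pref(U_0)\in\rond C^f$ (which is the class of initializations reflecting a given initial state). Specialising to single letters, $\Proba\left( Z_n=\alpha\right)=\Proba\left( U_n\in\alpha\rond R\right)\to\pi\left(\alpha\rond R\right)$, and $\left(\pi(\alpha\rond R)\right)_{\alpha\in\rond E}$ is a genuine probability distribution on $\rond E$ (Lemma~\ref{lem:pineq0}), identified with the stationary distribution of the semi-Markov chain; so $\left( Z_n\right)$ admits a limit distribution. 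Conversely, if $\left( Z_n\right)$ admits a limit distribution in the sense of~\cite{barbu/limnios/08} (convergence of $\Proba\left( Z_n=\cdot\right)$ toward a stationary distribution of the chain), pulling this back through $Z_n=X_n$ yields a $U$-stationary probability measure, whence the cascades converge by Theorem~\ref{cor:finite}, i.e.~(ii) holds.

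I expect the only real difficulty to be that last equivalence — matching ``$\left( Z_n\right)$ admits a limit distribution'' with ``$U$ admits an invariant probability measure''. The delicate point is that when some $m_{\alpha_0}=\infty$ the marginals $\Proba\left( Z_n=\cdot\right)$ may still converge, but toward a \emph{defective} limit that, read at the level of $U$, concentrates on an infinite context $\alpha_0^\infty$ and is \emph{not} a $U$-invariant measure; one must use the precise notion of ``limit distribution'' (convergence to a bona fide stationary distribution of the semi-Markov chain, equivalently the existence of a two-sided stationary extension of $\left( X_n\right)$) to rule this out and see that such a limit is exactly the invariant probability measure supplied by Theorem~\ref{cor:finite}. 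Everything else is the routine combinatorics of the $b$-comb together with the bookkeeping on the kernel $p$ carried out above.
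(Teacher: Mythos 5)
Your reduction to the $b$-comb VLMC, the identification of $\rond S$ with the pairs $\alpha\beta$, the computation $\casc\left(\alpha^k\beta\right)=\overline G_\alpha(k)$ and hence $\kappa_{\alpha\beta}=m_\alpha$, and the implication (ii) $\Rightarrow$ (i) via Theorem~\ref{cor:finite} all match the paper's proof. The gap is in the converse, and it is exactly the one you flag yourself: the sentence ``pulling this back through $Z_n=X_n$ yields a $U$-stationary probability measure'' is not an argument. A limit for the one-dimensional marginals $\Proba\left(Z_n=\cdot\right)$ on the finite set $\rond E$ does not obviously produce an invariant probability measure for the $\rond R$-valued chain $U$: the candidate limits $\lim_n\Proba\left(U_n\in w\rond R\right)$ could let mass escape to the infinite contexts $\alpha^\infty$ (precisely the ``defective'' scenario you describe), and invariance of the limit on cylinders based on internal words $\alpha^k$ requires an interchange of limit and infinite sum that is not free. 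Redefining ``limit distribution'' so that the limit is a stationary distribution of the semi-Markov chain does not help, since that is still a measure on $\rond E$, not on $\rond R$, and building the corresponding invariant measure already presupposes $m_\alpha<\infty$.

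The paper closes the converse without ever constructing an invariant measure. It first shows that the limit $\mu$ satisfies $\mu(\beta)\neq 0$ for every $\beta$ (this is where the positivity hypothesis $p_{\beta,\alpha}(1)\neq 0$ enters, via $\Proba\left(U_n\in\alpha\beta\rond R\right)=p_{\beta,\alpha}(1)\Proba\left(Z_{n-1}=\beta\right)$). It then uses the purely dynamical identity $\Proba\left(U_n\in\alpha^\ell\beta\rond R\right)=\casc\left(\alpha^\ell\beta\right)\Proba\left(U_{n-\ell+1}\in\alpha\beta\rond R\right)$ --- valid for any initial law, no stationarity needed --- to obtain $\sum_{\ell=1}^L\Proba\left(U_n\in\alpha^\ell\beta\rond R\right)\to\bigl[\sum_{\ell=1}^L\casc\left(\alpha^\ell\beta\right)\bigr]\,p_{\beta,\alpha}(1)\mu(\beta)$; since the left-hand side is at most $1$ for every $n$ and $L$, and $p_{\beta,\alpha}(1)\mu(\beta)>0$, the cascade series $\sum_\ell\casc\left(\alpha^\ell\beta\right)=m_\alpha$ must converge. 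You should replace your last step by this direct boundedness argument (or supply the missing convergence of finite-dimensional distributions together with a no-mass-at-infinity estimate on $\rond R$, which amounts to the same inequalities).
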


A proof of Theorem~\ref{th:CNSpourSM} can be found in Section~\ref{sec:proofs}, page~\pageref{proof:limitSMC}.

\begin{rem}
\label{rem:equivBL}
The sum $m_{\alpha}$ is readily seen as a mean sojourn time: $m_{\alpha} = \Espe\left( T_1 \big| J_0 = \alpha\right)$. Theorem~\ref{th:CNSpourSM} establishes that  $m_{\alpha}<\infty$ for any $\alpha\in\rond E$ is a \emph{necessary and sufficient} condition for a semi-Markov chain with true jumps and with a positive semi-Markov kernel to admit a limit distribution. 
Thus, the sufficient assumption $m_{\alpha}<\infty$ for any  $\alpha\in\rond E$ in \cite{barbu/limnios/08} becomes a NSC when also assuming that for any $\alpha,\beta\in\rond E, \alpha\not= \beta, k\geq 1$, $p_{\alpha ,\beta}(k)\not= 0$.
\end{rem}

\subsection{From a VLMC to its induced SMC and back (finite number of alpha-LIS)}
\label{sec:summarized}

The above allows us to go a little further for a non-null stable VLMC $(U_n)$ and its associated semi-Markov chain $(Z_n)$ of its successive context alpha-LIS, in the case when there are finitely many alpha-LIS's. 
Remark \ref{rem:semiMarkov} asserts that  one cannot recover the VLMC $\left( U_n\right)$ from the semi-Markov chain $\left( Z_n\right)$ (see Section \ref{subsec:compareSM}).
Nevertheless,  one may ask whether the NSC for existence of a limit distribution for the semi-Markov chain $\left( Z_n\right)$ is the same as the NSC for existence and unicity of a stationary probability measure for the VLMC $\left( U_n\right)$.
The answer is yes.

Indeed, under the assumptions of Theorem~\ref{cor:finite} (finite number of alpha-LIS), the induced  $\rond S$-valued semi-Markov chain $\left( Z_n\right)$ has a finite number of states. Thus, Theorem \ref{th:CNSpourSM} applies and gives a NSC for $\left( Z'_n\right)$, the semi-Markov chain with true jumps deduced from $\left( Z_n\right)$ by formulas \eqref{jumpSM}.
This NSC writes $m'_{\alpha s}< +\infty$ where
\[
m'_{\alpha s} = \Espe\left( T_1' | J_0' = \alpha s \right) = \sum_{k\geq 1} k \left( \sum_{\beta t\not= \alpha s} p'_{\alpha s, \beta t}\right).
\]
Besides, thanks to \eqref{jumpSMtime}, $m'_{\alpha s}< +\infty$ is equivalent to $m_{\alpha s}< +\infty$ since, as already noticed in Remark~\ref{rem:equivBL}, $m_{\alpha s} =\Espe\left( T_1 | J_0 = \alpha s \right) $.
Thanks to Proposition \ref{pro:compareSM}(i) and its proof, $\Espe\left( T_1 | J_0 = \alpha s \right)  = \kappa_{\alpha s}$, so that
\[
\kappa_{\alpha s} = m_{\alpha s}.
\]
Moreover, in Theorem \ref{cor:finite}, $\kappa_{\alpha s}<+\infty$ for any $\alpha s\in\rond S$ is the NSC for existence and unicity of a stationary probability measure for a stable VLMC with a finite number of alpha-LIS. Summarizing, the following holds.
\begin{pro}
\label{pro:summarized}
Let $\left( U_n\right) _n$ be a non-null stable VLMC admitting a finite number of alpha-LIS. Let $\left( Z_n\right) _n$ be the $\rond S$-valued process of its alpha-LIS -- see Formula~\eqref{def:Zn}. Then, the following properties are equivalent.
\begin{itemize}
\item[(i)]
$\left( U_n\right) _{n\geq 0}$ admits a unique stationary probability measure.
\item[(ii)]
The cascade series~\eqref{convCasc} converge.
\item[(iii)]
$\left( Z_n\right) _{n\geq 0}$ admits a limit distribution.
\end{itemize}
\end{pro}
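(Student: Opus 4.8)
The plan is to chain together results already established in the excerpt, so that the proof of Proposition~\ref{pro:summarized} reduces to carefully matching the hypotheses of each. The three equivalences to be proven are (i)~$\Longleftrightarrow$~(ii) and (ii)~$\Longleftrightarrow$~(iii); I would present them in that order.

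First, for (i)~$\Longleftrightarrow$~(ii): since $(U_n)_n$ is a non-null stable VLMC with $\#\rond S<\infty$, Theorem~\ref{cor:finite} applies verbatim and gives the equivalence between ``$U$ admits a unique stationary probability measure'' and ``the cascade series~\eqref{convCasc} converge''. Nothing more is needed here; this is a direct citation.

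Second, for (ii)~$\Longleftrightarrow$~(iii): I would argue that the convergence of the cascade series is equivalent to the condition $\kappa_{\alpha s}<+\infty$ for every $\alpha s\in\rond S$, which is immediate from~\eqref{defKappa} since $\rond S$ is finite and each $\kappa_{\alpha s}$ is the sum of the (nonnegative) cascade series of $\alpha s$. Next, by Proposition~\ref{pro:compareSM}(i) and its proof, $\Espe\left(T_1\mid J_0=\alpha s\right)=\kappa_{\alpha s}$ for the semi-Markov chain $(Z_n)$ induced by the VLMC, whose state space $\rond S$ is finite. Then I pass to the associated semi-Markov chain with true jumps $(Z'_n)$ obtained via~\eqref{jumpSM}: by~\eqref{jumpSMtime} the mean sojourn times $m'_{\alpha s}=\Espe\left(T'_1\mid J'_0=\alpha s\right)$ and $m_{\alpha s}=\kappa_{\alpha s}$ are simultaneously finite or infinite. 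Since $(Z'_n)$ and $(Z_n)$ have the same trajectories, they admit a limit distribution simultaneously. Finally, Theorem~\ref{th:CNSpourSM}, applied to $(Z'_n)$ on the finite state space $\rond S$, gives: $(Z'_n)$ admits a limit distribution if and only if $m'_{\alpha s}<+\infty$ for every $\alpha s\in\rond S$. Combining these equivalences yields (ii)~$\Longleftrightarrow$~(iii).

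The one point requiring genuine care — and what I expect to be the main obstacle — is verifying that Theorem~\ref{th:CNSpourSM} is actually applicable to $(Z'_n)$. That theorem requires a semi-Markov kernel with true jumps whose entries $p'_{\alpha s,\beta t}(k)$ are nonzero for all $\alpha s\neq\beta t$ and all $k\geq 1$. One must check, from the non-nullness of the VLMC and the explicit form of the semi-Markov kernel in Proposition~\ref{pro:compareSM}(iii), that positivity of the $q_c(\alpha)$ indeed forces positivity of each $p'_{\alpha s,\beta t}(k)$; this uses that for a stable context tree, for every $\alpha s\in\rond S$ and every target $\beta t\in\rond S$ there exist contexts of every sufficiently large length realizing the transition (via Lemma~\ref{lem:fondLem} and Lemma~\ref{lem:context_markov}, which give irreducibility and aperiodicity of the induced context chain), and that the recursion~\eqref{jumpSM} preserves positivity. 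Once this positivity is in hand, the rest is bookkeeping, and the three-way equivalence follows.
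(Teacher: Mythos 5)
Your chain of citations is exactly the paper's: Theorem~\ref{cor:finite} for (i)$\Leftrightarrow$(ii), then the identity $\kappa_{\alpha s}=m_{\alpha s}=\Espe\left(T_1\mid J_0=\alpha s\right)$ from Proposition~\ref{pro:compareSM}(i), the equivalence $m_{\alpha s}<\infty\Leftrightarrow m'_{\alpha s}<\infty$ from~\eqref{jumpSMtime}, the identity of the trajectories of $\left(Z_n\right)$ and $\left(Z'_n\right)$, and Theorem~\ref{th:CNSpourSM} applied to $\left(Z'_n\right)$ for (ii)$\Leftrightarrow$(iii). Up to your last paragraph you are reproducing the paper's argument.

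The step you single out as the main obstacle is, however, not merely delicate: the verification you sketch cannot be carried out, because the positivity you want to establish is false in general. By Proposition~\ref{pro:compareSM}(iii), $p_{\alpha s,\beta t}(k)$ is a sum of cascades over the contexts of length $|\alpha s|+k-1$ having $\alpha s$ as alpha-LIS and $t$ as a prefix, and such contexts need not exist. In the finite stable tree of Remark~\ref{rem:semiMarkov}, the only context with alpha-LIS $000$ is $000$ itself; adding a letter to the context $000$ produces either the context $000$ again or the context $10$, so $p_{000,111}(k)=0$ for every $k\geq 1$, and the recursion~\eqref{jumpSM} then gives $p'_{000,111}(k)=0$ for every $k\geq 1$ as well. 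Hence the positivity hypothesis of Theorem~\ref{th:CNSpourSM} fails for this induced chain, and non-nullness of the VLMC does not rescue it: Lemma~\ref{lem:context_markov} and Proposition~\ref{pro:irreducibility} give irreducibility, \emph{i.e.}\@ positivity of some iterate of $Q$, not positivity of every one-step kernel entry $p'_{\alpha s,\beta t}(k)$. To be fair, the paper itself invokes Theorem~\ref{th:CNSpourSM} at this point without checking that hypothesis, so you have surfaced a gap the text glosses over rather than created one; but your proposed justification is not a proof of the missing hypothesis. The implication (ii)$\Rightarrow$(iii) can be obtained without Theorem~\ref{th:CNSpourSM}: Theorem~\ref{cor:finite} gives $\Proba\left(U_n\in c\rond R\right)\to\pi\left(c\rond R\right)$ for every finite context $c$, the event $\set{Z_n=\alpha s}$ is the disjoint union of the events $\set{U_n\in c\rond R}$ over the contexts $c$ with alpha-LIS $\alpha s$, and a Fatou argument over the finite set $\rond S$ concludes. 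The converse (iii)$\Rightarrow$(ii) still requires an argument replacing the one in Theorem~\ref{th:CNSpourSM} when the kernel has vanishing entries.
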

\section{Proofs}
\label{sec:proofs}

\subsection{Proofs of Section~\ref{sec:def} (general case)}

\begin{proof}[Proof of lemma \ref{lem:cascade} (Cascade formulae)]
\label{proof:cascade}

\ 

(i)
Assume first that $\pi\left(w\rond R\right)\neq 0$.
Then, since $w$ is noninternal, $\pref (w)$ is well defined so that, by stationarity,
\begin{align*}
\pi\left(\alpha w\rond R\right)
&=\Proba _\pi\left( U_1\in\alpha w\rond R\right)\\
&=\Proba _\pi\left( U_{1}\in\alpha w\rond R|U_0\in w\rond R\right)
\Proba _\pi\left( U_0\in w\rond R\right)\\
&
=q_{\pref\left( w\right)}(\alpha)\pi\left(w\rond R\right)
\end{align*}
proving~\eqref{cascade1}.
If $\pi\left(w\rond R\right) =0$, then, by stationarity, $\pi\left(\alpha w\rond R\right) =\Proba _\pi\left( U_1\in\alpha w\rond R\right)\leq\Proba _\pi\left( U_0\in w\rond R\right)=0$ so that~\eqref{cascade1} remains true.

(ii)
By an argument similar to the one above, if $\pi (r)=0$, then
\[
\pi (\alpha r)=\Proba _\pi\left( U_1=\alpha r\right)\leq\Proba _\pi\left( U_0=r\right) =\pi (r)=0.
\]
If, on the contrary, $\pi (r)\neq 0$, then
\[
\pi (\alpha r)
=\Proba _\pi\left( U_1=\alpha r\right)
=\Proba _\pi\left( U_{1}=\alpha r|U_0=r\right)\Proba _\pi\left( U_0=r\right)
=q_{\pref (r)}(\alpha )\pi (r),
\]
which proves (ii).

(iii)
Direct induction from Formula~\eqref{cascade1}.
\end{proof}
\begin{proof}[Proof of lemma \ref{lem:pineq0}]
\label{proof:pineq0}
(i) We prove that if $w$ is a finite word and if $\alpha\in\rond A$, then $\left[\pi\left(\alpha w\rond R\right) =0\right]\Rightarrow\left[\pi \left(w\rond R\right)=0\right]$.
An induction on the length of $w$ is then sufficient to prove the result since  $\pi (\rond R)=1$.
Remember that $\rond I$ stands for the set of finite internal words.
\begin{enumerate}
\item
Assume that $w\notin\rond I$ and that $\pi\left(\alpha w\rond R\right) =0$.
Then, as a consequence of the cascade formula~\eqref{cascade1}, $0=\pi\left(w\rond R\right)q_{\pref (w)}(\alpha )$.
As no $q_c$ vanishes, $\pi\left(w\rond R\right) =0$.
\item
Assume now $w\in\rond I$ and $\pi\left(\alpha w\rond R\right) =0$.
Then, by disjoint union and Lemma~\ref{lem:cascade}, since $\pi$ is stationary,
\[
0=\pi\left(\alpha w\rond R\right)
=\sum _{\substack{c\in\rond C^f\\[1pt]c=w\cdots}}\pi\left(\alpha c\rond R\right)
+\sum _{\substack{c\in\rond C^i\\[1pt]c=w\cdots}}\pi\left(\alpha c\right)
=\sum _{\substack{c\in\rond C^f\\[1pt]c=w\cdots}}\pi\left(c\rond R\right)q_c(\alpha )
+\sum _{\substack{c\in\rond C^i\\[1pt]c=w\cdots}}\pi\left(c\right)q_c(\alpha ).
\]
As no $q_c$ vanishes, all the $\pi\left(c\rond R\right)$ and the $\pi (c)$ necessarily vanish so that, by disjoint union,
\[
\pi\left(w\rond R\right)
=\sum _{\substack{c\in\rond C^f\\[1pt]c=w\cdots}}\pi\left(c\rond R\right)
+\sum _{\substack{c\in\rond C^i\\[1pt]c=w\cdots}}\pi\left(c\right)
=0.
\]
\end{enumerate}

(ii) Denote $r=\alpha _1\alpha _2\cdots$ and $r_n=\alpha _n\alpha _{n+1}\cdots$ its $n$-th suffix, for every $n\geq 1$.
Since $\pi$ is stationary, an elementary induction from Formula~\eqref{cascade2} implies that, for every $m\geq 1$,
\begin{equation}
\label{pir}
\pi\left(r\right)
=\left(\prod _{k=1}^mq_{\pref (r_{k+1})}\left(\alpha _k\right)\right)\pi\left(r_{m+1}\right).
\end{equation}

\begin{enumerate}
\item
Assume first that $r=st^\infty$ is ultimately periodic, where $s$ and $t$ are finite words, $t=\beta _1\cdots\beta _T$ being nonempty.
Then, because of~\eqref{pir}, $\pi (r)\leq\pi\left(t^\infty\right)$ and $\pi\left(t^\infty\right)=\rho\pi\left(t^\infty\right)$ where
\[
\rho=
\prod _{k=1}^T
q_{\pref (\beta _{k+1}\cdots\beta _Tt^\infty)}\left(\beta _{k}\right).
\]
In this product, the term obtained for $k=T$ writes $q_{\pref\left( t^\infty\right)}\left(\beta _T\right)$.
Since the probability measures $q_c$ are all assumed never to vanish, they cannot take $1$ as a value so that $\rho <1$, which implies that $\pi\left(t^\infty\right)=0$.
Note that this argument proves that an invariant probability measure $\pi$ vanishes on ultimately periodic infinite words as soon as the $q_c$ never take $1$ as a value
(this assumption is weaker than non-nullness).

\item
Assume on the contrary that $r$ is aperiodic.
Then, $m\neq n\Longrightarrow r_n\neq r_m$ for all $n,m\geq 1$:
the $r_n$ are all distinct among the infinite branches of the context tree.
Thus, by disjoint union, 
\[
\sum _{n\geq 1}\pi\left(r_n\right)
\leq\sum _{c\in\rond C^i}\pi\left(c\right)
\leq \pi\left(\rond R\right)=1,
\]
which implies in particular that $\pi\left(r_n\right)$ tends to $0$ when $n$ tends to infinity.
Since $\pi\left(r\right)\leq\pi\left(r_n\right)$ because of Formula~\eqref{pir}, this leads directly to the result.
\end{enumerate}
\end{proof}
\begin{proof}[Proof of theorem~\ref{fQbij}.]
\label{proof:fQbij}
The proof is given for the alphabet $\rond A=\set{0,1}$.
It can be straightforwardly adapted to the case of an arbitrary finite alphabet.

Proof of {\it (i)}.
If $\pi$ is a stationary probability measure, 
disjoint union, Lemma~\ref{lem:pineq0}(ii) and the Cascade Formula~\eqref{cascade3} imply that
\[
1=\sum _{c\in\rond C^f}\pi\left(c\rond R\right)
=\sum _{c\in\rond C^f}\casc (c)\pi\left(\alpha _cs_c\rond R\right).
\]
Gathering together all the contexts that have the same alpha-LIS leads to
\[
1=\sum _{\alpha s\in\rond S}\pi\left(\alpha s\rond R\right)\left(\sum _{c\in\rond C^f,~c=\cdots [\alpha s]}\casc (c)\right).
\]
Now, by Lemma~\ref{lem:pineq0}(i), $\pi\left(\alpha s\rond R\right)\neq 0$ for all $\alpha s\in\rond S$. This forces the sums of cascades to be finite.

\vskip 5pt
Proof of {\it (ii)}.

1) \emph{Injectivity}.
Let $\pi$ be a stationary probability measure on $\rond R$.
As the cylinders based on finite words generate the whole $\sigma$-algebra, $\pi$ is determined by the $\pi\left(w\rond R\right)$, $w\in\rond W$.
Now write any $w\in\rond W\setminus\{\emptyset\}$ as $w=p\alpha s$ where $\alpha s$ is the alpha-LIS of $w$ and $p\in\rond W$
(beware, $\alpha s$ may not be the alpha-LIS of a context).
As $\pi$ is stationary, the cascade formula~\eqref{cascade3} entails $\pi\left(w\rond R\right) =\casc (w)\pi\left(\alpha s\rond R\right)$.
As a consequence, $\pi$ is determined by its values on the words $\alpha s$ where $ s\in\rond I$ is internal and $\alpha\in\rond A$.
Now, as $ s\in\rond I$, by disjoint union, cascade formula~\eqref{cascade1} and Lemma~\ref{lem:pineq0}(ii),
\[
\pi\left(\alpha s\rond R\right)
=\sum _{c\in\rond C^f,~c= s\cdots}\pi\left( c\rond R\right) q_c\left(\alpha\right).
\]
This means that $\pi$ is in fact determined by the $\pi\left(c\rond R\right)$ where $c$ is a finite context.
Lastly, as above, the stationarity of $\pi$, the cascade formula~\eqref{cascade3} and the decomposition of any context $c$ into $c=p_c\alpha _cs_c$ where $\alpha _cs_c$ is the alpha-LIS of $c$ together imply that $\pi$ is determined by the $\pi\left(\alpha s\rond R\right)$ where $s\in\rond S$
(remember, $\rond S$ denotes the set of all alpha-LIS \emph{of contexts}).
This proves that the restriction of $\fff$ to stationary measures is one-to-one.

\vskip 5pt
2) \emph{Image of a stationary probability measure}.
Let $\pi\in\rond M_1\left(\rond R\right)$ be stationary.
By disjoint union, as above, if $\alpha s\in\rond S$,
\[
\pi\left(\alpha s\rond R\right)
=\sum _{c\in\rond C^f,~c=s\cdots}\pi\left(c\rond R\right) q_c\left(\alpha\right) .
\]
Applying the Cascade Formula~\eqref{cascade3} to all contexts in the sum and noting that $\casc (\alpha c)=q_c(\alpha )\casc (c)$, one gets
\[
\pi\left(\alpha s\rond R\right)
=\sum _{c\in\rond C^f,~c=s\cdots}\casc (\alpha c)\pi\left(\alpha _cs_c\rond R\right).
\]
Gathering together all the contexts that have the same alpha-LIS entails
\[
\pi\left(\alpha s\rond R\right)
=\sum _{\beta t\in\rond S}\pi\left(\beta t\rond R\right)\left( \sum _{\substack{{c\in\rond C^f}\\[1pt]{c=s\cdots =\cdots [\beta t]}}}\casc (\alpha c)\right)
=\sum _{\beta t\in\rond S}\pi\left(\beta t\rond R\right)Q_{\beta t,\alpha s}.
\]
This means that the row vector $\left(\pi\left(\alpha s\rond R\right)\right) _{\alpha s\in\rond S}$ is a left-fixed vector for the matrix $Q$.
We have shown that $\fff$ sends a stationary probability measure to a left-fixed vector for~$Q$ with positive entries.
Moreover, as in the proof of (i), Equality~\eqref{masseTotale} holds.

\vskip 5pt
3) \emph{Surjectivity}.

Let $\left(v_{\alpha s}\right) _{\alpha s\in\rond S}\in [0,+\infty [^{\rond S}$ be a row vector, left-fixed by~$Q$, that satisfies $\sum _{\alpha s\in\rond S}v_{\alpha s}\kappa _{\alpha s}=1$.
Let $\mu$ be the function defined on $\rond S$ by $\mu\left( \alpha s\right)=v_{\alpha s}$.
Denoting by $\alpha _ws_w$ the alpha-LIS of any finite non-empty word $w$, the function~$\mu$ extends to any finite non-empty word in the following way:
\begin{equation}
\label{prolongementV}
\forall w\in\rond W\setminus\{\emptyset\},~\mu (w)
=\casc (w)\sum _{\substack{{c\in\rond C^f}\\[1pt]{c=s_w\cdots}}}\casc (\alpha_wc)\mu\left(\alpha _cs_c\right)\in [0,+\infty ].
\end{equation}
Notice that this definition actually extends $\mu$ because of the fixed vector property, and that, at this moment of the proof, $\mu (w)$ might be infinite.
Notice also that this implies $\mu(w)=\casc(w)\mu(\alpha_ws_w)$ for any $w\in\rond W$, $w\neq\emptyset$.

For every $n\geq 1$ and for all $w\in\rond W$ such that $|w|=n$, define $\pi_n\left(w\right) =\mu\left( w\right)$.
This clearly defines a $[0,+\infty ]$-valued measure $\pi_n$ on $\rond A_{n}=\prod_{1\le k\le n}\rond A$.
Besides, $\pi _1$ is a probability measure.
Indeed, because of Definition~\eqref{prolongementV} and Remark \eqref{rem:cascRem},
\[
\mu (0)+\mu (1)
=\sum _{c\in\rond C^f}\left(\casc\left( 0c\right) +\casc\left( 1c\right)\right)\mu\left(\alpha _cs_c\right)
=\sum _{c\in\rond C^f}\casc\left( c\right)\mu\left(\alpha _cs_c\right)
\]
which can be written
\[
\mu (0)+\mu (1)
=\sum _{\alpha s\in\rond S}\mu\left(\alpha s\right)\sum _{\substack{{c\in\rond C^f}\\[1pt]{c=\cdots [\alpha s]}}}\casc (c)
=\sum _{\alpha s\in\rond S}v_{\alpha s}\kappa _{\alpha s}=1,
\]
the last equality coming from the assumption on $(v_{\alpha s})_{\alpha s}$.

In view of applying Kolmogorov extension theorem, the consistency condition states as follows:
$\pi_{n+1}(w\rond A)=\pi_n(w)$ for any $w\in\rond W$ of length $n$.
This is true because
\begin{equation}
\label{eq:compatibility}
\mu(w0)+\mu(w1)=\mu(w).
\end{equation}
Indeed, for any $a\in\rond A$, since $s_w$ is internal, $s_wa$ is either internal or a context.
Furthermore,
\begin{itemize}
	\item if $s_wa\in\rond I$ then $s_{wa}=s_wa$, $\alpha_{wa}=\alpha_{w}$ and $\casc (wa)=\casc (w)$ so that 
\begin{equation}
\label{eq:pourCompatibility}
\mu (wa)=\casc(w)\sum_{\substack{{c\in\rond C^f}\\[1pt]{c=s_wa\cdots}}}\casc\left(\alpha_wc\right)\mu\left(\alpha_c s_c\right) ;
\end{equation}
	\item if $s_wa\in\rond C$ then denote $\kappa =s_wa$ so that $\casc (wa)=\casc (w)\casc\left(\alpha _w\kappa\right)$, $\alpha _{wa}=\alpha _\kappa$ and $s_{wa}=s_\kappa$.
	Thus, $\mu (wa)
	=\casc (wa)\sum _{c=s_\kappa\cdots}\casc\left(\alpha _\kappa c\right)\mu \left(\alpha _cs_c\right)
	=\casc (w)\casc\left(\alpha _w\kappa\right)\mu\left(\alpha _\kappa s_\kappa\right)$, which implies that~\eqref{eq:pourCompatibility} still holds, the sum being reduced to one single term since $s_wa$ is itself a context.
	\end{itemize}
Valid in all cases, Formula~\eqref{eq:pourCompatibility} easily implies Claim~\eqref{eq:compatibility}. Consequently all the $\pi_n$ are probability measures.
By Kolmogorov extension theorem, there exists a unique probability measure $\pi$ on $\rond R$ such that $\pi_{|\rond A_{n}}=\pi_n$ for every $n$.
Note that this result implies that $v_{\alpha s}=\pi\left(\alpha s\rond R\right)\leq 1$, for every $\alpha s\in\rond S$.

Furthermore, $\pi (c)=0$ for any infinite context $c$.
Indeed, one has successively,
\begin{align*}
1&=\sum _{c\in\rond C^f}\pi\left(c\rond R\right)+\sum _{c\in\rond C^i}\pi\left(c\right)\\
&=\sum _{c\in\rond C^f}\mu (c)+\sum _{c\in\rond C^i}\pi\left(c\right)
\end{align*}
and, besides,
\[
\sum _{c\in\rond C^f}\mu (c)
=\sum _{c\in\rond C^f}\casc (c)\mu\left(\alpha _cs_c\right)
=\sum _{\alpha s\in\rond S}\mu\left(\alpha s\right)\sum _{\substack{{c\in\rond C^f}\\[1pt]{c=\cdots [\alpha s]}}}\casc (c)
=\sum _{\alpha s\in\rond S}v_{\alpha s}\kappa _{\alpha s}=1
\]
so that $\sum _{c\in\rond C^i}\pi\left(c\right)=0$.

Finally, the stationarity of $\pi$ follows from  the identity $\mu(0w)+\mu(1w)=\mu(w)$ for any finite word $w$.
Namely:
\begin{itemize}
\item if $w\notin\rond I$, then for $a\in\rond A$, $s_{aw}=s_w$, $\alpha_{aw}=\alpha_w$ hence
\[
\mu(0w)+\mu(1w)
=\left(\casc (0w)+\casc (1w)\right)\sum_{\substack{{c\in\rond C^f}\\[1pt]{c=s_w\cdots}}}\casc\left(\alpha_wc\right)\mu\left(\alpha_cs_c\right) .
\]
Now, Remark~\ref{rem:cascRem} entails the claim.
\item if $w\in\rond I$, then for $a\in\rond A$, $s_{aw}=w$, $\alpha_{aw}=a$ and $\casc(aw)=1$ thus
\[
\mu(aw)=\sum_{\substack{{c\in\rond C^f}\\[1pt]{c=w\cdots}}}\casc (ac)\mu\left(\alpha_cs_c\right) .
\]
Using again Remark~\ref{rem:cascRem}, it comes
\begin{align*}
\mu(0w)+\mu(1w)&=\sum_{c\in\rond C^f,~c=w\cdots}\left(\casc(0c)+\casc(1c)\right)\mu(\alpha_cs_c)\\
&=\sum_{c\in\rond C^f,~c=w\cdots}\casc(c)\mu(\alpha_cs_c)\\
&=\sum_{c\in\rond C^f,~c=w\cdots}\mu(c)\\
&=\sum_{c\in\rond C^f,~c=w\cdots}\pi\left(c\rond R\right)\\
&=\pi\left(w\rond R\right)-\sum_{\substack{{c\in\rond C^i}\\[1pt]{c=w\cdots}}}\pi\left(c\right) =\mu (w),
\end{align*}
\end{itemize}
the last equality being valid because $\pi$ vanishes on infinite contexts.
Since $f(\pi )=\left( v_{\alpha s}\right) _{\alpha s}$, this concludes the proof.
\Ffin
\end{proof}

\subsection{Proofs of Section~\ref{sec:stable} (stable case)}
\label{sec:proofsStable}

\begin{proof}[Proof of Proposition \ref{pro:defstable}]
\label{proof:defstable}

$(i)\implies (ii)$.
Take $c\in\rond C$ and $\alpha\in\rond A$.
Assume that $\alpha c\in\rond I$.
So, if $\beta\in\rond A$ is any letter, then $\alpha c\beta\in\rond T$.
The item $(i)$ implies therefore that $c\beta\in\rond T$, which contradicts $c\in\rond C$.

$(ii)\implies (i)$. Take $\alpha\in\rond A$ and $w\in\rond W$ such that $\alpha w\in\rond T$. If  $w\notin\rond T$ then there exists a finite context $c$ such that $w=cw'$ with $w'\neq\emptyset$. It comes $\alpha cw'\in\rond T$, which implies $\alpha c\in\rond I$ and this contradicts $(ii)$.

$(i)\iff(iii)$ is straightforward.

$(ii)\implies (iv)$
What needs to be proved is that $\pref\left(U_{n+1}\right)$ only depends on $U_n$ through $\pref\left(U_n\right)$.
In other words, we shall prove that for all $s\in\rond R, \alpha \in \rond A$, $\pref\left(\alpha s\right)$ only depends on $s$ through $\pref(s)$.
This is clear because
\[
\pref(\alpha s) = \pref\left(\alpha\pref(s) \right).
\]
Indeed, $\pref(s)\in\rond C$ and if $\pref(s)\in\rond C^f$, then (ii) implies $\alpha\pref(s)\notin\rond I$ (if $\pref(s)$ is an infinite context, this means that $s=\pref(s)$ and the above equality is straightforward). Therefore $\alpha\pref(s)$ writes $cw$ with $c\in\rond C$ and $w\in\rond W$. On one hand, this entails $\pref\left(\alpha\pref(s) \right)=c$. On the other hand, this means that $cw$ is a prefix of $\alpha s$ thus $\pref(\alpha s)=c$.

$(iv)\implies (ii)$
We shall prove the contrapositive.
Assume there exists $c\in\rond C^f$ and $\alpha\in\rond A$ such that $\alpha c\in\rond I$.
%
Let $s\in\rond R$ such that $\pref(s)=c$. As $\alpha c\in\rond I$, $\pref(\alpha s)$ is a context which has $\alpha c$ as a strict prefix.
Therefore, $\pref(\alpha s)$ does not only depend on $\pref(s)$,
but on a prefix of $s$ strictly longer than $\pref(s)$.
Thus, conditionally to $C_n$, the transition to the context $C_{n+1}$ does not depend on $C_n$ but on a strictly longer prefix of $U_n$.
This proves that $\left(C_n\right) _n$ is not Markovian.
\end{proof}
\begin{proof}[Proof of Lemma \ref{lem:fondLem}]
\label{proof:fondLem}
1. Assume that $t$ is a context LIS such that $c=t\cdots$, then, by definition of a context LIS, there exists $\alpha\in\rond A$ such that $\alpha t$ is a context alpha-LIS. Since $\rond T$ is stable, Lemma~\ref{lem:contextStableTree} implies that $\alpha t$ is a context, therefore $\alpha c\notin\rond C$ because two different contexts cannot be prefix of one another. Thus $\rond A_c\neq\emptyset$.
	
2. Let $\alpha$ be in $\rond A_c$ so that $\alpha c$ is not a context. As $\rond T$ is stable, $\alpha c$ is not internal, thus it is an external node.
Let $c'=\pref (\alpha c)$.
The context $c'$ is a strict prefix of $\alpha c$.
Since $\rond T$ is stable, $\sigma (c')$ is non-external.
But $\sigma (c')$ cannot be a context because it is a prefix of $c$ which is a context.
Thus $\sigma (c')$ is internal.
This implies that $t_{\alpha}:=\sigma (c')$ is the LIS of $c'$ and a prefix of $c$ as well, and that $\alpha t_{\alpha}=c'\in\rond C$.
Besides, whenever (i) and (ii) are satisfied,  $t_\alpha$ writes necessarily $t_\alpha =\sigma\left(\pref\left( \alpha c\right)\right)$.
Thus existence and unicity of $t_\alpha$ are proven.
Finally, since $c'=\alpha t_{\alpha}=\pref (\alpha c)$, $t_{\alpha}$ is a prefix of $c$, so that for every $\beta\notin\rond A_c$, $\beta t_{\alpha}$ is a prefix of the context $\beta c$.
Consequently, $\beta t_{\alpha}\notin\rond C$ because two different contexts cannot be prefix of one another.
\end{proof}
\begin{proof}[Proof of Lemma \ref{lem:context_markov}] 
\label{proof:context_markov}
\emph{Irreducibility.}
Let $c$ and $c'$ be finite contexts.
Denote $c'=\alpha_k\dots\alpha_2\alpha_1$, $k\geq 1$.
In order to prove that the Markov chain $(C_n)$ has a non null transition from $c$ to $c'$, let us add the successive letters of $c'$ (starting from $\alpha _1$) to the left of $c$ and prove that at each time, the transition is possible and non-null.
Assume that $C_0=c$ and consider the word $\alpha_1c$.
As $\rond T$ is shift-stable $\alpha_1c\notin\rond I$ and $c_1:=\pref(\alpha_1c)$ is a (possibly not strict) prefix of $\alpha_1c$ which may be written $c_{1}=\alpha_1w_1$ with $w_1$ a prefix (possibly empty) of $c$.
The transition equals $q_c(\alpha_1)$ and is therefore non-null.
Let us add the second letter and consider the word $\alpha_{2}\alpha_1w_1=\alpha_{2}c_1$.
Again $\alpha_{2}\alpha_1w_1\notin\rond I$ and $c_2:=\pref\left( \alpha_{2}\alpha_1w_1\right)$ is a prefix of $\alpha_{2}\alpha_1w_1$.
Here, the point is that $c_2$ cannot be $\alpha_{2}$ otherwise we would have $\sigma^{k-2}(c')=\alpha_{2}\alpha_1\notin\rond T$ which would contradict the shift-stability of~$\rond T$.
Therefore $c_{2}=\alpha_{2}\alpha_1w_2$ with $w_2$ a prefix of $w_1$.
By adding successively the letters of $c'$, with the same arguments, one gets a sequence of contexts $c_j=\pref\left(\alpha _jc_{j-1}\right) =\alpha _j\alpha _{j-1}\dots \alpha _1\dots$.
The last step necessarily writes $c_k=\alpha _k\dots\alpha _1=c'$ because $c'$ is a context.
At each step, the transition is non zero because all the $q_c(\alpha)$ are non-null.

\emph{Aperiodicity.}
Let us prove that, given $c\in\rond C^f$, the g.c.d.\@ of the lengths of the admissible paths from $c$ to itself equals $1$.
Let $k$ be the length of $c$.
The above proof of the irreducibility shows that there exists an admissible path of length $k$ joining $c$ to itself.
Using similar arguments, let $\alpha$ be any letter and let $c'$ be the context $c'=\pref\left( \alpha c\right)$.
There is an admissible path of length $1$ joining $c$ to $c'$.
Using again the proof of the irreducibility, there is also an admissible path of length $k$ joining $c'$ to $c$.
Combining these two paths provides a path of length $k+1$ that joins $c$ to itself.
Since $gcd(k,k+1)=1$, the chain is aperiodic.
\end{proof}

\begin{proof}[Proof of Proposition~\ref{pro:compareSM}]
\label{proof:compareSM}

The initial context $C_0=\pref\left( U_0\right)$ is assumed to be finite.
Since the context tree is stable, this implies that all $C_n=\pref\left( U_n\right)$ are also almost surely finite words -- see Proposition~\ref{pro:defstable}(ii).
Thanks to this fact, the definition of $\left( Z_n\right) _n$ makes sense (see~\eqref{def:Zn}).

\vskip 5pt
Let us prove (i), i.e.  that ${T}_n$ is almost surely finite (and $S_n$ as well), by induction on $n\geq 1$. Remember that $S_0 = 0$. 
To lighten the computation, assume that $C_0$ is a context alpha-LIS. 
If not, $C_0$ writes $\beta_1\dots\beta_p\alpha s$ and a term $\casc(C_0)$ has to be added to the successive equalities without modifying the argumentation.
\[
\Proba(T_1=+\infty) = \sum_{\alpha s\in\rond S}\Proba(T_1=+\infty|C_{0}=\alpha s)\Proba(C_{0}=\alpha s).
\]
It is sufficient to prove that, for all $\alpha s\in\rond S$, $\Proba(T_1=+\infty|C_{0}=\alpha s)=0$.
Now 
\[\Proba(T_1=+\infty|C_{0}=\alpha s)=\lim_{k\to\infty}\Proba(T_1\geq k|C_{0}=\alpha s).
\]
With the description of the process in Section \ref{subsec:compareSM}, see also Figure \ref{fig:SM},
\begin{eqnarray*}
\Proba\left( T_{1}\geq k|C_{S_{0}}=\alpha s\right)&=&\sum_{\substack{ \beta_1, \dots , \beta_{k-1}\in\rond A \\[1pt]\forall i \leq k-1,~\beta_i\dots \beta_1\alpha s\in \rond C }} q_{\alpha s}\left( \beta_1\right)q_{\beta_1\alpha s}\left( \beta_2\right) \dots q_{\beta_{k-2}\dots \beta_1\alpha s}\left( \beta_{k-1}\right)\\
&=&\sum_{\substack{ \beta_1, \dots ,\beta_{k-1}\in\rond A \\[1pt]\forall i \leq k-1,~\beta_i\dots \beta_1\alpha s\in \rond C }}\casc\left( \beta_{k-1}\dots  \beta_1\alpha s \right)\\
&=&\sum_{\substack{c\in\rond C,~c=\cdots [\alpha s]\\[1pt]|c|=|\alpha s| +k-1}}\casc c \\
&=& \kappa_{\alpha s}(k),
\end{eqnarray*}
which, by assumption, tends to $0$ when $k$ tends to infinity.
Consequently, $T_1$ and $S_1 = T_1$ is a.s. finite.

\vskip 5pt
Now, assume that, for all $n\geq 1$, $S_{n-1}$ is a.s. finite. Repeat the above argument, replacing $S_0$ by $S_{n-1}$ and $T_1$ by $T_n$. It appears that for  all $n\geq 1$,
\[
\Proba\left( T_{1}\geq k|C_{S_{0}}=\alpha s\right) = \Proba\left( T_{n}\geq k|C_{S_{n-1}}=\alpha s\right) =  \kappa_{\alpha s}(k),
\]
so that $T_n$ is  a.s. finite and $S_n$ as well.
Note that this proves in passing that the $T_n$ are almost surely finite if and only if all the $\kappa_{\alpha s}(k)$ tend to $0$ when $k$ tends to infinity, which has been evoked in the description of the process $\left( Z_n\right) _n$, a few lines before Proposition~\ref{pro:compareSM}'s statement.

Remembering the description of the process at the beginning of Section~\ref{subsec:compareSM}, (ii) is straightforward.
Moreover, $C_{S_{n-1}} = J_{n-1}$ so that summing on $k$ gives $\Espe\left( T_{n} \big| J_{n-1} = \alpha s\right) = \kappa_{\alpha s}$.
This makes the proof of (i) complete.

\vskip 5pt
For (iii), based on the description of the process and the finiteness of the $T_n$ and $S_n$, it is clear that the distribution of $(J_{n+1},T_{n+1})$ conditioned on the past only depends on $\left( J_n,T_n\right)$, so that $\left( J_n,T_n\right) _{n\geq 0}$ is a Markov process.  
For $j\geq 0$, $k\geq 2$, $\alpha s\in\rond S$, $\beta t\in\rond S$,
\[
\Proba \left( J_{n+1} = \beta t,T_{n+1} = k \big| J_n = \alpha s, T_n = j \right)
= \Proba \left(
Z_{S_n +k} = \beta t, Z_{S_n +k-1}=\dots =Z_{S_n +1}=\alpha s
\big|
Z_{S_n} = \alpha s
\right).
\]
Notice that this expression does not depend on $j$, ensuring that $\left( J_n,T_n\right) _{n\geq 0}$ is a Markov renewal chain.
Continuing the computation leads to
\[
\begin{array}{l}
\Proba \left( {J}_{n+1} = \beta t,{T}_{n+1} = k \big| {J}_n = \alpha s, {T}_n = j \right)
=
\Proba \left( {J}_{n+1} = \beta t,{T}_{n+1} = k \big| {J}_n = \alpha s \right)  \\[10pt]
= 
\displaystyle\sum_{\beta_1, \dots, \beta_{k-1}}\Proba \left( U_{S_n +k} = \beta t\cdots, U_{S_n +k-1} =\beta_{k-1}\cdots\beta_1 \alpha s\cdots, \dots , U_{S_n +1} =  \beta_1\alpha s\cdots \big| U_{S_n} = \alpha s\cdots \right) \\[10pt]
=  \displaystyle\sum_{\beta_1, \dots, \beta_{k-1}}  q_{\beta_{k-1}\cdots\beta_1\alpha s}(\beta) \dots q_{\beta_1\alpha s}(\beta_2)q_{\alpha s}(\beta_1),
\end{array}
\]
where the sum concerns the letters $\beta _1,\dots ,\beta _{k-1}$ such that $c=\beta_{k-1}\cdots\beta_1\alpha s$ is a context that begins with the LIS~$t$, and $\beta c$ is not a context.
This can be shortly written under the form
\begin{equation}
\label{kernel}
\Proba \left( {J}_{n+1} = \beta t,{T}_{n+1} = k \big| {J}_n = \alpha s, {T}_n = j \right) 
=  \displaystyle\sum_{\substack{c\in\rond C,~c=t\dots\\[1pt]c= \dots [\alpha s]\\[1pt]|c| = |\alpha s| + k-1}} \casc (\beta c).
\end{equation}
For $k=1$, the calculation reduces to the following: let $j\geq 0$, $\alpha s\in\rond S$, $\beta t\in\rond S$ and assume that $\alpha s$ begins with the LIS~$t$. Then
\[
\Proba \left( {J}_{n+1} = \beta t,{T}_{n+1} = 1 \big| {J}_n = \alpha s, {T}_n = j \right)
=\Proba \left( Z_{S_n +1} = \beta t \big| Z_{S_n} = \alpha s \right)
=q_{\alpha s}(\beta)
\]
which equals $\casc (\beta c)$ for $c = \alpha s$.

This computations prove that the semi-Markov renewal kernel of the Markov renewal chain $\left( J_n,T_n\right) _{n\geq 0}$ is indeed given by \eqref{kernel}.
Moreover, summing on $k$ in~\eqref{kernel} gives
\[
\Proba \left( {J}_{n+1} = \beta t \big| {J}_n = \alpha s \right)  =  \displaystyle\sum_{\substack{c\in\rond C,~c=t\dots\\[1pt]c= \dots [\alpha s]}} \casc \left(\beta c\right) = Q_{\alpha s, \beta t}.
\]
The latter provides that $\left( J_n\right) _n$ is a Markov process, with transition matrix $Q$ and therefore that $Q$ is stochastic.
It gives a proof of Proposition \ref{pro:stochasticity}.
\end{proof}
\begin{proof}[Proof of Proposition \ref{pro:irreducibility}]
\label{proof:irreducibility}
Let $U$ be the VLMC defined by $\left(\rond T,q\right)$.
As before in the text, for every $n$, let $C_n=\pref\left( U_n\right)$ and $Z_n$ be the alpha-LIS of $C_n$.
Let also $J$ be the internal chain of the semi-Markov process $Z=\left( Z_n\right) _n$ -- see Definition~\ref{def:semiMarkov} and Proposition~\ref{pro:compareSM}.
Since the context tree is assumed to be stable, any context alpha-LIS is a finite context (Lemma~\ref{lem:contextStableTree}).
Besides, the Markov chain $C=\left( C_n\right) _n$ induced on $\rond C^f$ has been shown to be irreducible (Proposition~\ref{lem:context_markov}).
Therefore, two arbitrary finite contexts are joined by an admissible path relative to the Markov chain $C$.
In particular, two arbitrary context alpha-LIS $\alpha s$ and $\beta t$ are joined by an admissible path relative to the Markov chain $C$.
Taking the alpha-LIS of such a path of contexts provide a path of context alpha-LIS joining $\alpha s$ to $\beta t$ for the process $Z$, which means that conditioning by $Z_0=\alpha s$, there is some $n\geq 0$ such that the event $Z_n=\beta t$ occurs with positive probability.
Restricting this path to jump times provides an admissible path joining $\alpha s$ to $\beta t$ relative to the Markov process $J$.
\end{proof}

\begin{proof}[Proof of Theorem \ref{th:stable} (Invariant probability measures for a stable VLMC)]
\label{proof:stable}

\ 

({\it 3.$\implies$1.}) Since $Q$ is recurrent and irreducible, there exists a unique line $\g Rv$ of left-fixed vectors for~$Q$.
Let $v=\left( v_{\alpha s}\right) _{\alpha s\in\rond S}$ be such a vector having non-negative entries
(see for example \cite[Theorem 5.4]{seneta/06}).
Theorem~\ref{fQbij}(ii) coupled with the assumption on the series $\sum _{\alpha s\in\rond S} v_{\alpha s}\kappa_{\alpha s}$ entails directly the existence and uniqueness of a stationary probability measure.

({\it 2.$\implies$3.})
If there exists a stationary probability measure, then Theorem~\ref{fQbij}(i) and Lemma~\ref{lem:pineq0} assert that the cascade series converge and that $Q$ admits at least one left-fixed vector $v$ with positive entries such that $\sum _{\alpha s\in\rond S} v_{\alpha s}\kappa_{\alpha s}<\infty$.
Besides, every $\kappa _{\alpha s}$ is greater than $1$.
Indeed, the cascade of any alpha-LIS is $1$ and, in the stable case, any alpha-LIS is a context (Lemma~\ref{lem:contextStableTree}).
Thus, $v$ is summable and $Q$ is positive recurrent
(see for instance~\cite[Corollary of Theorem 5.5]{seneta/06}).
Since it is irreducible (Proposition~\ref{pro:irreducibility}), it admits a unique direction of left-fixed vectors $\g Rv$, proving ({\it 3.}) by Theorem~\ref{fQbij}.
\end{proof}
\begin{proof}[Proof of Theorem \ref{cor:finite} (finite number of alpha-LIS)]
\label{proof:finite}

\

Thanks to Theorem~\ref{th:stable}, (i) and (ii) are equivalent.
Moreover, (i)$\implies$(iii) is contained in Theorem~\ref{fQbij}(i).
Assume reciprocally that the cascade series converge.
Since $Q$ is stochastic, irreducible and finite dimensional, it admits a unique direction of left-fixed vectors, so that Theorem~\ref{fQbij}(ii) allows us to conclude.
	
\emph{Convergence towards $\pi$.}
Denote by $\pi _{\rond C^f}$ the measure on $\rond C^f$ induced by $\pi$, defined by $\pi _{\rond C^f}(c)=\pi\left(c\rond R\right)$.
For any $n\geq 0$, denote also by $C_n$ the \emph{cont} of $U_n$.
Thanks to Lemma~\ref{lem:pineq0}(ii), $\pi _{\rond C^f}$ is a probability measure on $\rond C^f$.
Thus, since the process induced by $\left(C_n\right) _n$ on $\rond C^f$ is an irreducible aperiodic Markov chain (Lemma~\ref{lem:context_markov}) on a denumerable state space that admits $\pi _{\rond C^f}$ as an invariant probability measure, the distribution of $C_n$ converges to $\pi _{\rond C^f}$ as soon as $C_0$ is finite (and $\pi _{\rond C^f}$ is the unique invariant probability measure on $\rond C^f$ for the Markov chain induced by $\left( U_n\right) _n$ on $\rond C^f$).
In particular, if $\nu$ is a distribution of $U_0$ that does not charge infinite contexts, for any context $c\in\rond C^f$,
\begin{equation}
\label{convContext}
	\Proba_{\nu}{\left( U_n\in c\rond R\right)}
\tooSous{n\to\infty}
\pi\left( c\rond R\right) .
\end{equation}

Now, for any finite word $w$, use again the notation $w=\beta _1\cdots\beta _{p_w}\alpha _ws_w$ where the $\beta _k$  and $\alpha _w$ are letters, $p_w$ is a nonnegative integer and $s_w$ the LIS of $w$.
Take $w\in\rond W$.

\cercle1
Assume first that $w$ is noninternal and that $n\geq p_w$.
Then,
\[
	\Proba_{\nu}{\left( U_n\in w\rond R\right)}
	=\casc (w)\Proba_{\nu}{\left( U_{n-p_w}\in\alpha _ws_w\rond R\right)}
\]
Since $\alpha _ws_w$ is a context (Lemma~\ref{lem:contextStableTree}(i)), this entails by~\eqref{convContext} that
\[
	\Proba_{\nu}{\left( U_n\in w\rond R\right)}
\tooSous{n\to\infty}
\casc(w)\pi\left(\alpha _ws_w\rond R\right)
=\pi\left( w\rond R\right).
\]

\cercle2
Assume now that $w$ is an internal word.
In this case, for any $n\geq 0$, by disjoint union, since $U_n$ does not charge any infinite word (because $U_0$ does not)
\[
	\Proba_{\nu}{\left(U_n\in w\rond R\right)}
	=\sum _{c\in\rond C^f,~c=w\cdots}\Proba_{\nu}{\left( U_n\in c\rond R\right)}.
\]
Distinguish then the long enough contexts from other ones by defining $\alpha _n(w)$ and $\beta _n(w)$ as the real numbers
\[
\alpha _n(w)
	=\sum _{\substack{c\in\rond C^f,~c=w\cdots\\[2pt]p_c\leq n}}\Proba_{\nu}{\left(U_n\in c\rond R\right)}
{\rm ~~and~~}
\beta _n(w)
	=\sum _{\substack{c\in\rond C^f,~c=w\cdots\\[2pt]p_c\geq n+1}}\Proba_{\nu}{\left(U_n\in c\rond R\right)}.
\]
Deal first with $\alpha _n(w)$ that can be written,
\begin{equation}
\label{alpha}
\alpha _n(w)
=\sum _{\alpha s\in\rond S}
\sum _{\substack{c\in\rond C^f\\[1pt]c=w\cdots=\cdots [\alpha s]\\[1pt]p_c\leq n}}
	\Proba_{\nu}{\left( U_n\in c\rond R\right)}
=\sum _{\alpha s\in\rond S}
\sum _{\substack{c\in\rond C^f\\[1pt]c=w\cdots=\cdots [\alpha s]\\[1pt]p_c\leq n}}
	\casc (c)\Proba_{\nu}{\left( U_{n-p_c}\in\alpha s\rond R\right)}.
\end{equation}
For a given $\alpha s\in\rond S$, by hypothesis, the cascade series
\[
\sum _{\substack{c\in\rond C^f\\[1pt]c=\cdots [\alpha s]}}\casc (c)
\]
converges.
Thus, the convergence in the last sum of~\eqref{alpha} is dominated so that, using~\eqref{convContext} again,
\[
\sum _{\substack{c\in\rond C^f\\[1pt]c=w\cdots=\cdots [\alpha s]\\[1pt]p_c\leq n}}
\casc (c)\Proba_{\nu}{\left(U_{n-p_c}\in\alpha s\rond R\right)}
\tooSous{n\to\infty}
\sum _{\substack{c\in\rond C^f\\[1pt]c=w\cdots=\cdots [\alpha s]}}
\casc (c)\pi\left(\alpha s\rond R\right).
\]
Since $\rond S$ is finite, one gets finally
\[
\alpha _n(w)
=\sum _{\alpha s\in\rond S}
\sum _{\substack{c\in\rond C^f\\[1pt]c=w\cdots=\cdots [\alpha s]\\[1pt]p_c\leq n}}
\casc (c)\Proba_{\nu}{\left(U_{n-p_c}\in\alpha s\rond R\right)}
\tooSous{n\to\infty}
\sum _{\alpha s\in\rond S}
\sum _{\substack{c\in\rond C^f\\[1pt]c=w\cdots=\cdots [\alpha s]}}
\casc (c)\pi\left(\alpha s\rond R\right)
=\pi\left(w\rond R\right).
\]
\cercle3
Extend now the definition of the $\alpha _n$ and $\beta _n$ to any finite word, by denoting
\[
\alpha _n(w)
=\Proba_{\nu}{\left(U_n\in w\rond R\right)}
{\rm ~~and~~}
\beta _n(w)=0
\]
whenever $w$ is noninternal.
With this notation, if $w$ is \emph{any} finite word
\begin{equation}
\label{sumAlphaBeta}
	\Proba_{\nu}{\left(U_n\in w\rond R\right)}
=\alpha _n(w)+\beta _n(w)
\end{equation}
and it is shown in \cercle1 and \cercle2 that
\begin{equation}
\label{convAlpha}
\alpha _n(w)
\tooSous{n\to\infty}
=\pi\left(w\rond R\right)
{\rm ~~and~~}
\beta _n(w)\geq 0.
\end{equation}
Take finally any finite word $w$ and denote its length by $N$.
By disjoint union, if $n\geq 1$,
\begin{equation}
\label{unEgale}
	1=\sum _{v\in\rond W,~|v|=N}\Proba_{\nu}{\left( U_n\in v\rond R\right)}
=\sum _{v\in\rond W,~|v|=N}\alpha _n(v)+\sum _{v\in\rond W,~|v|=N}\beta _n(v).
\end{equation}
Using~\eqref{convAlpha}, this sum being finite, one gets
\begin{equation}
\label{alphaConvUn}
\sum _{v\in\rond W,~|v|=N}\alpha _n(v)
\tooSous{n\to\infty}
\sum _{v\in\rond W,~|v|=N}\pi\left( v\rond R\right)
=1,
\end{equation}
the last equality resulting from disjoint union.
Putting~\eqref{unEgale} and~\eqref{alphaConvUn} together shows that
\[
\lim _{n\to\infty}\sum _{v\in\rond W,~|v|=N}\beta _n(v)=0.
\]
In particular, if $w$ is \emph{any} finite word,
\begin{equation}
\label{convBeta}
\lim _{n\to\infty}\beta _n(w)=0.
\end{equation}
Thus,~\eqref{sumAlphaBeta},~\eqref{convAlpha} and~\eqref{convBeta} show the result.
\end{proof}

\begin{proof}[Proof of Theorem \ref{semiMarkovIsVLMC} (A semi-Markov chain on a finite state space is a stable VLMC)]
\label{proof:semiMarkovIsVLMC}

\ 

Let $\left( Z_n\right) _{n\geq 0}$ be a semi-Markov chain with true jumps on a state space of cardinality $b$.
Without lack of generality, for technical convenience, take $\rond A=\g Z /b\g Z$.
Denote by $\left( J_N\right) _{N\geq 0}$ the internal chain of $\left( Z_n\right) _n$, namely the process of the successive \emph{different} states of $\left( Z_n\right) _{n\geq 0}$ and by $\left( T_N\right) _{N\geq 0}$ the process of the successive sojourn times in the visited states, with the convention $T_0=0$.
Assuming that $\left( Z_n\right) _{n\geq 0}$ is semi-Markov amounts to supposing that almost surely, for every $N\geq 0$, $\alpha\in\rond A$, $k\geq 1$,
\[
\Proba\left( J_{N+1}=\alpha ,T_{N+1}=k\big| J_0,\cdots ,J_N,T_1,\cdots ,T_N\right)
=\Proba\left( J_{N+1}=\alpha ,T_{N+1}=k\big| J_N\right).
\]
Denote by $p=\left( p_{\alpha ,\beta}(k)\right) _{\alpha,\beta\in\rond A, k\geq 1}$ the semi-Markov kernel of the process:
\[
p_{\alpha ,\beta}(k)=\Proba\left( J_{N+1}=\beta ,T_{N+1}=k\big|J_N=\alpha\right).
\]
For any $n\geq 0$, define the right-infinite random sequence
\[
V_n=Z_nZ_{n-1}\cdots Z_1Z_0\left(1+Z_0\right)Y_{-2},
\]
where $Y_{-2}$ is any non atomic distribution on $\rond R$.
Remember that the process $\left( Z_n\right) _{n\geq 0}$ takes its values in the set 
$\rond A=\g Z /b\g Z$.
The semi-Markov property of $\left( Z_n\right) _{n\geq 0}$ guarantees that the $\rond R$-valued process $\left( V_n\right) _{n\geq 0}$ is Markovian.

\vskip 5pt
Define now  $\left( U_n\right) _{n\geq 0}$ as the VLMC on the $b$-comb with alphabet $\rond A$, with the following parameters.
As an initial state, take $U_0=Z_0\left( 1+Z_0\right)Y_{-2}$.
As a transition probability associated with the context $\beta ^\ell\gamma$ where $\beta ,\gamma\in\rond A$, $\beta\neq\gamma$ and $\ell\geq 1$, take the probability measure $q_{\beta ^\ell\gamma}$ on $\rond A$:
{\small
\begin{equation}
\label{quelsQ!}
\forall\alpha\in\rond A,
q_{\beta ^\ell\gamma}(\alpha )=
\left\{
\begin{array}{l}
\displaystyle
\frac
{p_{\beta,\alpha}(\ell )}
{\displaystyle\sum _{k\geq \ell}\left(\sum _{\delta\in\rond A}p_{\beta,\delta}(k)\right)}
{\rm ~if~}\alpha\neq\beta\\[50pt]
\displaystyle
1-\frac
{\displaystyle\sum _{\delta\in\rond A}p_{\beta,\delta}(\ell )}
{\displaystyle\sum _{k\geq \ell}\left(\sum _{\delta\in\rond A}p_{\beta,\delta}(k)\right)}
=\frac
{\displaystyle\sum _{k\geq \ell +1}\left(\sum _{\delta\in\rond A}p_{\beta,\delta}(k)\right)}
{\displaystyle\sum _{k\geq \ell}\left(\sum _{\delta\in\rond A}p_{\beta,\delta}(k)\right)}
{\rm ~if~}\alpha =\beta.
\end{array}
\right.
\end{equation}
}

Note that $q_{\beta ^\ell\gamma}$ does not depend on $\gamma$ and that the convergence of the series is guaranteed by the properties of the semi-Markov kernel $p$.

\vskip 5pt
We show that both $\rond R$-valued Markov processes $\left( U_n\right) _{n\geq 0}$ and $\left( V_n\right) _{n\geq 0}$ have the same distribution stating that, for any $\alpha\in\rond A$ and $n\geq 0$, almost surely,
\[
\Proba\left( V_{n+1}=\alpha V_n\big| V_n\right)
=q_{\pref\left( V_n\right)}(\alpha ).
\]
Since $V_0$ and $U_0$ have the same distribution, this will entail the result.

\vskip 5pt
Let $r\in\rond R$, not of the form $\beta ^\infty$, $\beta\in\rond A$;
write $r=\beta^\ell s$ where $\ell\geq 1$, $\beta\in\rond A$ and $s\in\rond R$ starts with a letter different from $\beta$.
Denote, by $S_n$ the partial sum of $T_k$, as in Definition~\ref{def:semiMarkov}.
For any $\alpha\in\rond A$, $\alpha\neq\beta$ and for any $n\geq 0$, as soon as $N$ is such that $S_N\leq n\leq S_{N+1}-1$,
\[
\Proba\left( V_{n+1}=\alpha V_n\big| V_n=r\right)
=\Proba\left( J_{N+1}=\alpha ,T_{N+1}=\ell \big| J_N=\beta, T_{N+1}\geq\ell, V_{n-\ell}=s\right).
\]
Because of the semi-Markov property, this entails that
\[
\Proba\left( V_{n+1}=\alpha V_n\big| V_n=r\right)
=\Proba\left( J_{N+1}=\alpha ,T_{N+1}=\ell \big| J_N=\beta, T_{N+1}\geq\ell\right).
\]
Using the semi-Markov kernel, this leads to
\[
\Proba\left( V_{n+1}=\alpha V_n\big| V_n=r\right)
=\frac{\Proba\left( J_{N+1}=\alpha ,T_{N+1}=\ell \big| J_N=\beta\right)}
{\Proba\left( T_{N+1}\geq\ell \big| J_N=\beta\right)}
=\frac
{p_{\beta,\alpha}(\ell )}
{\displaystyle\sum _{\delta\in\rond A}\sum _{k\geq \ell}p_{\beta,\delta}(k)}.
\]
This shows that Formulae~\eqref{quelsQ!} are the suitable ones to ensure that both Markov processes $\left( U_n\right) _{n\geq 0}$ and $\left( V_n\right) _{n\geq 0}$ have the same distribution.
\end{proof}
\begin{proof}[Proof of Theorem \ref{th:CNSpourSM} (Limit distribution of a semi-Markov chain)]
\label{proof:limitSMC}

\

Let $b$ be the cardinal of the state space $\rond E$.
Let $U$ be the VLMC defined on the $b$-comb (Definition~\ref{def:bcomb}) by the transition probabilities of Formula~\eqref{quelsQ!}.
This context tree (the $b$-comb) is defined on the alphabet $\rond A=\rond E$;
it admits a finite number of context alpha-LIS, namely the words $\alpha\beta$ where $\alpha$ and $\beta$ are two distincts elements of $\rond E$;
furthermore, in this tree, the set of contexts having $\alpha\beta$ as an alpha-LIS is $\set{\alpha ^\ell\beta:~\ell\geq 1}$.
The processes $\left( Z_n\right) _n$ and $U$ are related by Theorem~\ref{semiMarkovIsVLMC}.
Moreover, Theorem \ref{cor:finite} applies to $U$.
Indeed, thanks to Formulas~\eqref{quelsQ!}, the assumption on the positivity of the semi-Markov kernel $p$ implies that the associated VLMC $U$ is non-null.

\vskip 5pt
By Theorem \ref{cor:finite}, the convergence of the cascade series implies the convergence $\Proba\left( U_n\in w\rond R\right)
\to
\pi\left( w\rond R\right)$
when $n$ tends to infinity,
for any finite word~$w$. 
As a consequence, as soon as the cascade series converge, $\Proba\left( Z_n = \beta | Z_0 = \alpha \right)$ converges to $\pi\left(\beta\rond R\right)$ when $n$ tends to infinity, and this limit does not depend on $\alpha$. 
In other words,  $\left( Z_n\right) _{n\geq 0}$
admits $\left(\pi\left( \beta\rond R\right)\right) _{\beta\in\rond E}$ as a limit distribution.

\vskip 5pt
Conversely, assume that $\left( Z_n\right) _n$ admits a limit distribution $\mu$.
In terms of the VLMC $\left( U_n\right) _n$, this implies that $\Proba\left( U_n\in\alpha\rond R\right)$ tends to $\mu (\alpha )$ when $n$ tends to infinity, for every $\alpha\in\rond E$.
Let $\alpha,\beta\in\rond E$ be two distinct letters, so that $\alpha\beta\in\rond S$.
For every $n\geq 1$,
\[
\Proba\left( U_n\in\alpha\beta\rond R\right)
=\Proba\left( Z_n=\alpha,Z_{n-1}=\beta\right)
=p_{\beta,\alpha}(1)\Proba\left( Z_{n-1}=\beta\right)
\]
so that 
\begin{equation}
\label{deuxLettres}
\Proba\left( U_n\in\alpha\beta\rond R\right)\tooSous{n\to\infty}p_{\beta,\alpha}(1)\mu (\beta ).
\end{equation}

This has two consequences.

Firstly, $\mu (\alpha )\neq 0$ for every $\alpha\in\rond E$.
Indeed, since $\mu$ is a probability measure, all these numbers cannot simultaneously vanish;
let thus $\beta\in\rond E$ such that $\mu (\beta )\neq 0$.
Let also $\alpha\in\rond E\setminus\set{\beta}$.
For any $n\geq 1$, as before,
\[
\Proba\left( Z_nZ_{n-1}=\alpha\beta\right)
=p_{\beta,\alpha}(1)\Proba\left( Z_{n-1}=\beta\right).
\]
Assume that $\mu (\alpha )=0$.
Since $\Proba\left( Z_nZ_{n-1}=\alpha\beta\right)\leq\Proba\left( Z_n=\alpha\right)$, the left hand side of this equality tends to $0$ when $n$ tends to infinity, which is impossible because the limit right hand side, namely $p_{\beta,\alpha}(1)\mu (\beta )$, is non-null.
Thus, $\mu (\alpha )\neq 0$.
 
Secondly, for every $\ell\geq 1$,
\[
\Proba\left( U_n\in\alpha ^\ell\beta\rond R\right)
=\casc\left(\alpha ^\ell\beta\right)\Proba\left( U_{n-l+1}\in\alpha\beta\rond R\right)
\tooSous{n\to\infty}\casc\left(\alpha ^\ell\beta\right)p_{\beta,\alpha}(1)\mu (\beta ).
\]
Thus, if $L$ is a positive integer,
\begin{equation}
\label{limitNiveau}
\sum _{\ell =1}^L\Proba\left( U_n\in\alpha ^\ell\beta\rond R\right)
\tooSous{n\to\infty}
\left[\sum _{\ell =1}^L\casc\left(\alpha ^\ell\beta\right)\right]
p_{\beta,\alpha}(1)\mu (\beta ).
\end{equation}
For any $n$, the left hand side of Formula~\eqref{limitNiveau} if less than $1$ because it is bounded above by the whole sum $\sum _{c\in\rond C^f}\Proba\left( U_n\in c\rond R\right) +\sum _{c\in\rond C^i}\Proba\left( U_n=c\right)$ which equals $1$ by disjoint union.
Thus, the right hand side is also bounded.
In particular, since $p_{\beta,\alpha}(1)$ and $\mu (\beta )$ are non-null, the series of positive numbers $\sum _{\ell}\casc\left(\alpha ^\ell\beta\right)$ converge.
Besides, the cascade series of the context alpha-LIS $\alpha\beta$ is precisely $\kappa _{\alpha\beta}=\sum _{\ell\geq 1}\casc\left(\alpha ^\ell\beta\right)$.
This allows us to conclude that the existence of a limit distribution for the process $\left( Z_n\right) _n$ implies the convergence of all the cascade series.

\vskip 5pt
It suffices now to show that condition $(ii)$ is equivalent to the convergence of every cascade series
\[
\sum _{\ell\geq 1}\casc\left(\alpha ^\ell\beta\right).
\]
Let $\alpha$ and $\beta$ be two distinct elements of $\rond A$.
For any $\ell\geq 1$, since
\[
\casc\left(\alpha ^\ell\beta\right)=\prod _{k=1}^{\ell -1}q_{\alpha ^k\beta} (\alpha),
\]
an immediate reading of Formula~\eqref{quelsQ!} shows that
\[
\casc\left(\alpha ^\ell\beta\right)=
\frac
{\displaystyle\sum _{k\geq \ell}\left(\sum _{\gamma\in\rond A}p_{\alpha,\gamma}(k)\right)}
{\displaystyle\sum _{k\geq 1}\left(\sum _{\gamma\in\rond A}p_{\alpha,\gamma}(k)\right)},
\]
so that the sum of these cascades writes
\[
\sum _{\ell\geq 1}\casc\left(\alpha ^\ell\beta\right)
=\frac
{\displaystyle\sum _{k\geq 1}k\left(\sum _{\gamma\in\rond A}p_{\alpha,\gamma}(k)\right)}
{\displaystyle\sum _{k\geq 1}\left(\sum _{\gamma\in\rond A}p_{\alpha,\gamma}(k)\right)}\in [0,+\infty],
\]
leading to the result.
Note that the cascade series $\sum _{\ell\geq 1}\casc\left(\alpha ^\ell\beta\right)$ does not depend on $\beta$, because all the probability measures one has to associate with the leaves of a given context of the comb are all the same ones, as can be seen on Formula~\eqref{quelsQ!}.
\end{proof}

\section{Open problems and conjectures}
\label{sec:open}
\subsection{Right-fixed vectors for $Q$}
\label{rem:conjQvp1}

Take a probabilised context tree.
When the tree  is stable and whenever the sequence $\left(\kappa _{\alpha s}(n)\right) _{n}$ converge to $0$ for every $\alpha s\in\rond S$, the square matrix $Q$ can be seen as the transition matrix of some $\rond S$-valued Markov chain, so that it turns out to be stochastic -- see Proposition~\ref{pro:stochasticity}.
This is not true in general if one removes the stability assumption (Remark~\ref{rem:QfaitPasTout}).
We nevertheless make the following conjecture.
\begin{conj}
For any probabilised context tree, whenever the sequence $\left(\kappa _{\alpha s}(n)\right) _{n}$ converge to $0$ for every $\alpha s\in\rond S$, the matrix $Q$ always admits $1$ as a \emph{right}-eigenvalue.
\end{conj}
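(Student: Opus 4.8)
The plan is to reinterpret the identity $Qu=u$ as a harmonic‑function statement on a family of trees, and then to produce the right‑eigenvector by a compactness argument on finite sections.

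\emph{Reduction.} For every finite context $c$ and every letter $\beta$ one has, because $c$ is non‑internal, $\sigma(\beta c)=c$ and $\pref(c)=c$, hence $\casc(\beta c)=q_c(\beta)\casc(c)$ and $\alpha_{\beta c}s_{\beta c}=\alpha_c s_c$. Inserting this into the definition of $Q$, a right‑fixed vector $u=(u_{\alpha s})_{\alpha s\in\rond S}$ is exactly a solution of
\[
\forall\,\alpha s\in\rond S,\qquad
u_{\alpha s}=\sum_{\substack{c\in\rond C^f\\ c=\cdots[\alpha s]}}\casc(c)\sum_{\beta\in\rond A}q_c(\beta)\!\!\sum_{\substack{t\preceq c\\ \beta t\in\rond S}}\!\!u_{\beta t},
\]
where $t\preceq c$ means that $t$ is a prefix of $c$. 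For each $\alpha s\in\rond S$ consider the \emph{descent tree} $\rond D_{\alpha s}$: its vertices are the finite words having $\alpha s$ as alpha‑LIS, the root is $\alpha s$, and an edge joins $w$ to $\beta w$. One checks that a non‑internal vertex $w$ has exactly $\#\rond A$ children, all again in $\rond D_{\alpha s}$, whereas an internal vertex is a leaf. On $\rond D_{\alpha s}$ the cascade is harmonic: $\casc(\alpha s)=1$ and $\casc(w)=\sum_{\beta}\casc(\beta w)$ at every non‑internal vertex (Remark~\ref{rem:cascRem}); equivalently $\casc(\beta w)=q_{\pref(w)}(\beta)\casc(w)$, so that $\casc$ records the reaching‑probabilities of the VLMC started at $\alpha s$ and building a word by prepending letters, stopped the first time that word becomes internal. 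In this language the hypothesis $\kappa_{\alpha s}(n)\to 0$ says that the total cascade mass carried by the context‑vertices at depth $n$ of $\rond D_{\alpha s}$ tends to $0$.

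\emph{Construction of $u$.} I would first settle the case $\#\rond S<\infty$. Then $Q$ is a finite non‑negative matrix, so ``$1$ is a right‑eigenvalue of $Q$'' is equivalent to $\det(Q-I)=0$, and I would prove the sharper statement $\rho(Q)=1$ by exhibiting a non‑zero $x\ge 0$ with $Qx\ge x$ and a non‑zero $y\ge 0$ with $y^{\top}Q\le y^{\top}$; the natural candidates are built from the descent trees (an ``escape weight'' of $\rond D_{\alpha s}$ on one side, the total cascade mass $\sum_{c=\cdots[\alpha s]}\casc(c)$ on the other), and the harmonicity of $\casc$ together with $\kappa_{\alpha s}(n)\to 0$ should turn both inequalities into equalities. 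For countable $\rond S$, enumerate $\rond S$, let $Q^{(N)}$ be the $N\times N$ principal section, take a non‑negative right‑eigenvector $u^{(N)}$ of $Q^{(N)}$ for its Perron root $\lambda_N$ normalised by $\max_{\alpha s}u^{(N)}_{\alpha s}=1$, and extract coordinatewise subsequential limits $\lambda_N\to\lambda_\infty$, $u^{(N)}\to u$; then $u\ge 0$, $u\neq 0$ and $Qu\le\lambda_\infty u$. The reverse inequality and $\lambda_\infty=1$ should follow from the conservation of cascade mass along the descent trees encoded in $\kappa_{\alpha s}(n)\to 0$ — this is the non‑stable counterpart of the computation underlying the row‑stochasticity of $Q$ in Proposition~\ref{pro:stochasticity} (where $u=\mathbf 1$), the failure of stochasticity being compensated by $u$ being non‑constant.

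\emph{Main obstacle.} The hard part is the countable case: one must control the cascade mass that can escape to infinity along the \emph{external} branches of the descent trees — there is here no renewal / semi‑Markov structure to lean on, contrary to Proposition~\ref{pro:compareSM} — so as to guarantee that the limit $u$ is genuinely non‑zero, that $Qu=u$ (and not merely $Qu\le u$), and that $\lambda_\infty=1$. A second, more combinatorial difficulty, already visible for finite $\rond S$, is that a single cascade $\casc(\beta c)$ may be recorded in several columns of $Q$, one for each context‑LIS prefix $t\preceq c$ with $\beta t\in\rond S$; the naive ``mass in equals mass out'' balance then has to be replaced by a careful summation over $\rond D_{\alpha s}$, and it is precisely this feature — not the mere finiteness of the entries of $Q$ — that makes the hypothesis $\kappa_{\alpha s}(n)\to 0$ indispensable.
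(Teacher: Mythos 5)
This statement is Conjecture 5.1 of the paper: it appears in the section of open problems and the authors give \emph{no} proof of it, so there is no argument in the paper to compare yours against; the only question is whether your text is itself a proof, and it is not — it is a programme. Your preliminary reduction is correct and genuinely useful: for a finite context $c$ one does have $\casc(\beta c)=q_c(\beta)\casc(c)$ and $\alpha_{\beta c}s_{\beta c}=\alpha_c s_c$, the rewriting of $Qu=u$ as a sum over the descent trees $\rond D_{\alpha s}$ is exact, and reading the hypothesis $\kappa_{\alpha s}(n)\to 0$ as the vanishing of the cascade mass at depth $n$ of $\rond D_{\alpha s}$ is indeed the mechanism behind the row-stochasticity proved in the stable case (Proposition~\ref{pro:stochasticity}). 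But every step that would actually establish the conjecture is left conditional. In the finite case you name candidates $x$ and $y$ with $Qx\ge x$ and $y^{\top}Q\le y^{\top}$ without writing them down or checking either inequality; and the difficulty you yourself isolate — that a single cascade $\casc(\beta c)$ is counted once in the column of \emph{every} $\beta t\in\rond S$ with $t$ a prefix of $c$, so that no naive conservation of mass balances the rows of $Q$ — is precisely where the argument must do real work, and where you stop. In the countable case the diagonal extraction only yields $u\ge 0$, $u\neq 0$ and $Qu\le\lambda_\infty u$; you acknowledge that the non-vanishing of $u$ in the limit, the equality $Qu=u$ and the identification $\lambda_\infty=1$ are all unproved, and without a substitute for the renewal structure of Proposition~\ref{pro:compareSM} none of them follows from what is written.

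In short, the combinatorial set-up is sound but no complete argument is given even for $\#\rond S<\infty$, and the statement remains, as in the paper, a conjecture. A concrete way to test and sharpen your finite-dimensional strategy is the three-by-three example of Remark~\ref{rem:QfaitPasTout}: there a direct computation shows that, in the order $00$, $10$, $1$, the vector $u=\bigl(1,\,q_1(0),\,q_1(1)\bigr)$ satisfies $Qu=u$ (each row reduces, after using $\casc(0w)+\casc(1w)=\casc(w)$ for non-internal $w$, to an identity of the form $\sum_\beta q_c(\beta)=1$). The closed form of that vector already suggests that the right candidate for $x$ is a vector of exit probabilities attached to the alpha-LIS themselves rather than the escape weights of the descent trees you propose, and that the telescoping you need is the one along a \emph{single} branch of $\rond D_{\alpha s}$, not a global mass balance over its columns.
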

In particular, thanks to Theorem~\ref{fQbij}, if a context tree has a finite set of alpha-LIS and if this conjecture is true, then the corresponding VLMC always admits at least one invariant probability measure as soon as its (finitely many) cascade series converge.

\subsection{Convergence of cascade series}

Consider two very simple examples on the alphabet $\rond A=\set{0,1}$, pictured hereunder:
the left comb and the bamboo blossom -- see~\cite{cenac/chauvin/paccaut/pouyanne/12} for a complete treatment of stationary probability measures for these VLMC.
It turns out that the left comb gets one context alpha-LIS and thus one cascade series, that can be convergent or not depending on the distributions $q_c$.
The bamboo blossom gets two context alpha-LIS, both cascade series being always convergent with geometrical rates whatever the (non-null) distributions $q_c$ are.
This phenomenon, which seems to be generalizable, leads us to the following conjecture.

\begin{center}
\begin{tikzpicture}[scale=0.5]
\tikzset{every leaf node/.style={draw,circle,fill,scale=0.8},every internal node/.style={draw,circle,scale=0.01}}
\Tree [.{} [.{} [.{} [.{} [.{}  \edge[line width=2pt,dashed];\node[fill=white,draw=white]{};\edge[draw=white];\node[fill=white,draw=white]{}; {} ] {} ] {} ] {} ] {} ]
\draw (0,-7) node{The left comb};
\end{tikzpicture}
\hskip 100pt
\begin{tikzpicture}[scale=0.4]
\tikzset{every leaf node/.style={draw,circle,fill},every internal node/.style={draw,circle,scale=0.01}}
\Tree [.{} [.{} {} [.{} [.{} {} [.{} [.{} {} [.{} \edge[line width=2pt,dashed];\node[fill=white,draw=white]{};\edge[draw=white];\node[fill=white,draw=white]{}; {} ] ] {} ] ] {} ] ] {} ]
\draw (0,-8.5) node{The bamboo blossom};
\end{tikzpicture}
\end{center}

\begin{conj}
Take a non-null probabilised context tree.
When the tree does not have any infinite shift-stable subtree, all the cascade series converge, with geometrical rates.
\end{conj}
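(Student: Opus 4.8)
The plan is to distil the hypothesis into a combinatorial frequency estimate, use it to show that each individual cascade $\casc(c)$ decays geometrically in the length of its $\beta$-part, and then to sum over contexts.

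\emph{Step 1: reformulating the hypothesis.} Introduce $\rond I^{\ast}:=\{w\in\rond W:\ \sigma^k(w)\in\rond I\ \text{for all }k\ge 0\}$, the set of words all of whose iterated shifts are internal nodes (recall $\emptyset\in\rond I$). Since a prefix of an internal node is internal, $\rond I^{\ast}$ is prefix-closed, and it is $\sigma$-stable by construction, so it is a shift-stable subtree of $\rond T$; conversely, any infinite shift-stable subtree contains an infinite branch, all of whose prefixes lie in $\rond I^{\ast}$. Hence ``$\rond T$ has no infinite shift-stable subtree'' is exactly ``$\rond I^{\ast}$ is finite''. Put $L:=1+\max\{|w|:w\in\rond I^{\ast}\}$. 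The consequence I will use: for any word $w\notin\rond I^{\ast}$ the smallest $j$ with $\sigma^j(w)$ non-internal is $\le L-1$ (the length-$j$ prefix of $w$ then belongs to $\rond I^{\ast}$); in particular no window of $\rond T$ of length $\ge L$ can have all its suffixes internal.

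\emph{Step 2: the frequency lemma.} Set $\rond F:=\{d\in\rond C^f:|d|\le L\}$, a finite set since $\rond T$ is locally finite. I claim: for every finite word $w$ with alpha-LIS $\alpha s$ and $\beta$-part $\beta_1\cdots\beta_p$, at least $\ent{(p-L)/L}$ of the indices $j\in\{1,\dots,p\}$ satisfy $\pref(\sigma^j(w))\in\rond F$. Indeed, the $\sigma^j(w)$ for $1\le j\le p$ are precisely the strict suffixes of $w$ longer than $s$, hence non-internal by maximality of $s$, so $\pref(\sigma^j(w))$ is defined; and if $|\pref(\sigma^j(w))|\ge L+1$ then the length-$L$ prefix $w_{[j+1..j+L]}$ of $\sigma^j(w)$ is a strict prefix of a context, hence internal. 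Were fewer than $\ent{p/L}$ indices ``short'', some block $\{j_0+1,\dots,j_0+L\}$ of $L$ consecutive indices would have $|\pref(\sigma^j(w))|\ge L+1$ throughout; then the length-$L$ window $w_{[j_0+2..j_0+1+L]}$ and all its suffixes would be internal (each such suffix is a prefix of $w_{[j+1..j+L]}$ for some $j$ in the block), contradicting $\rond I^{\ast}$ having no element of length $L$ (boundary indices with $j+L>|w|$ number at most $L$ and are harmless). With $\rho:=\max_{d\in\rond F}\max_{\alpha\in\rond A}q_d(\alpha)$, which is $<1$ because $\rond F$ is finite and non-nullness forbids $q_d(\alpha)=1$, this gives $\casc(w)=\prod_{j=1}^{p}q_{\pref(\sigma^j(w))}(\beta_j)\le\rho^{\ent{(p-L)/L}}$; so every finite context $c$ with alpha-LIS $\alpha s$ of length $|\alpha s|+k-1$ satisfies $\casc(c)\le\rho^{\ent{(k-1-L)/L}}$, i.e.\ each term of the cascade series of $\alpha s$ decays geometrically in $k$.

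\emph{Step 3: summing, and the obstacle.} It remains to control $\kappa_{\alpha s}(k)=\sum_{c}\casc(c)$ over contexts of a fixed length, and this is where I expect the genuine difficulty, consistent with the statement being only conjectured: bounding this sum by (number of such contexts)$\times$(maximal term) fails, since a context tree with countably many infinite branches may still carry exponentially many leaves at a given depth. I would instead read $\kappa_{\alpha s}$ as the expected number of leaves met, before absorption at an internal node, by the chain started at $\alpha s$ that prepends a letter $\gamma$ to its current non-internal word $w$ with weight $q_{\pref(w)}(\gamma)$. Step 2 shows that, along any surviving trajectory, a positive proportion of the visited states lie over a context of $\rond F$, where, since $\min_{d\in\rond F}\min_{\alpha}q_d(\alpha)>0$, each combinatorially admissible jump onto an internal node has probability bounded below. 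What is missing is the uniform escape estimate: that from any non-internal word with short cont there is a prepend path of bounded length ending on an internal node, equivalently that the chain, conditioned on being alive, is absorbed within the next $L$ steps with probability bounded away from $0$. The filament-of-all-words example — where a non-internal prefix, once inside a deep external excursion, can be extended on the left indefinitely without ever becoming internal — shows one cannot expect absorption on every trajectory; the correct argument must rather show that such non-absorbing excursions meet only geometrically few \emph{leaves}, and this is the crux I do not see how to close in full generality, although it should once more be a consequence of $\rond I^{\ast}$ being finite.
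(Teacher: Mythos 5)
First, note that the paper contains no proof of this statement: it is stated in Section~\ref{sec:open} as an open conjecture, supported only by the two illustrative examples (left comb versus bamboo blossom). So there is no ``paper proof'' to compare yours against; what you have written can only be judged on its own terms, and on those terms it is an honest, partially successful attack that stops exactly where the real difficulty lies.

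Your Steps 1 and 2 are correct and constitute genuine progress. The equivalence between ``no infinite shift-stable subtree'' and the finiteness of $\rond I^{\ast}$ holds (both directions go through via prefix-closedness and K\"onig's lemma on the locally finite tree), and the frequency lemma is sound: if no word of length $L$ has all of its suffixes internal, then in any cascade $\casc(w)=\prod_j q_{\pref(\sigma^j(w))}(\beta_j)$ at least a proportion $1/L$ of the factors are evaluated at one of the finitely many contexts of length at most $L$, hence are bounded by some $\rho<1$ thanks to non-nullness. This gives $\casc(c)\le\rho^{\ent{(k-1-L)/L}}$ for every finite context $c$ of length $|\alpha s|+k-1$ with alpha-LIS $\alpha s$, i.e.\ geometric decay of the \emph{individual} terms. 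But, as you say yourself in Step 3, this is not the conjecture: what must decay geometrically is $\kappa_{\alpha s}(k)$, a sum over all such contexts at a fixed depth, and the paper explicitly warns (in the remark following the definition of context trees) that a tree with countably many infinite branches may still have a number of leaves at depth $n$ growing fast, so that (termwise bound)$\times$(cardinality) can diverge. The missing piece is precisely the uniform absorption/escape estimate you name at the end --- that the left-prepending chain started at $\alpha s$, while its current word stays non-internal, meets only geometrically few leaves --- and neither your argument nor the paper supplies it. So there is a genuine gap, which is exactly why the statement remains conjectural; your Step 2 would, however, be a natural first lemma in any future proof.
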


\subsection{Vanishing of cascades and $\sigma$-finite invariant measures}

Take a \emph{stable} probabilised context tree.
As recalled just above (Section~\ref{rem:conjQvp1}), whenever the sequence $\left(\kappa _{\alpha s}(n)\right) _{n}$ converge to $0$ for every $\alpha s\in\rond S$ (we call this assumption \emph{vanishing of cascades}), the square matrix $Q$ is stochastic by Proposition~\ref{pro:stochasticity}.
Moreover, Theorem~\ref{fQbij} or Theorem~\ref{th:stable} asserts that the convergence of cascade series is a necessary condition for the VLMC to admit an invariant probability measure.
As stated herunder, the vanishing of cascades is conjectured to be a necessary condition for the VLMC to admit an invariant \emph{$\sigma$-finite} measure.

\begin{conj}
Let $U$ be a VLMC defined by a probabilised \emph{stable} context tree.
Assume that $U$ admits an invariant $\sigma$-finite measure.
Then, for every $\alpha s\in\rond S$, the sequence $\left(\kappa _{\alpha s}(n)\right) _{n}$ tends to $0$ when $n$ tends to infinity
(and, consequently, $Q$ is stochastic).
\end{conj}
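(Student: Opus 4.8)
\emph{Approach.} The conjecture is equivalent to a renewal statement: for every $\alpha s\in\rond S$, the induced chain $(C_n)_n=(\pref(U_n))_n$ started at $C_0=\alpha s$ returns to $\rond S$ almost surely. Indeed, the computation in the proof of Proposition~\ref{pro:compareSM} shows that, for each finite $k\geq1$, $\kappa_{\alpha s}(k)=\Proba\bigl(C_1,\dots,C_{k-1}\text{ all have alpha-LIS }\alpha s\bigm|C_0=\alpha s\bigr)$; this is nonincreasing in $k$, and its limit $\ell_{\alpha s}:=\lim_k\kappa_{\alpha s}(k)=\Proba(T_1=+\infty\mid C_0=\alpha s)$ is precisely the probability that $|C_n|\to\infty$ while the alpha-LIS never changes, i.e.\ that no renewal ever occurs. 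So the goal is $\ell_{\alpha s}=0$ for every $\alpha s\in\rond S$.

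\emph{Step 1: translate into $\mu$-masses and reduce.} Let $\mu$ be an invariant $\sigma$-finite measure of $U$; we assume $v_{\alpha s}:=\mu(\alpha s\rond R)\in(0,+\infty)$ for every $\alpha s\in\rond S$ (positivity follows from the argument of Lemma~\ref{lem:pineq0}(i), which uses only the cascade formulae --- valid for $\sigma$-finite invariant measures --- together with $\mu(\rond R)>0$; finiteness is a harmless regularity assumption on $\mu$). The Cascade Formula~\eqref{cascade3} gives $\mu(c\rond R)=\casc(c)\,v_{\alpha_cs_c}$ for each finite context $c$, and the cylinders $c\rond R=\{\pref(r)=c\}$ are pairwise disjoint, so summing over the contexts $c=\cdots[\alpha s]$ of length $|\alpha s|+k-1$ yields $\mu(B_k)=v_{\alpha s}\,\kappa_{\alpha s}(k)$, where $B_k=B_k^{\alpha s}$ is the set of $r\in\rond R$ whose cont is finite, has alpha-LIS $\alpha s$, and has length $|\alpha s|+k-1$. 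Suppose now, for a contradiction, that $\ell:=\ell_{\alpha_0s_0}>0$ for some $\alpha_0s_0\in\rond S$. The $B_k$ are pairwise disjoint and $\mu(B_k)=v_{\alpha_0s_0}\kappa_{\alpha_0s_0}(k)\geq v_{\alpha_0s_0}\ell>0$ for every $k$; were $\sum_k\mu(B_k)<+\infty$ we would get $\mu(B_k)\to0$, hence $\kappa_{\alpha_0s_0}(k)\to0$, contradicting $\ell>0$. Therefore $\mu\bigl(\bigsqcup_{k\geq1}B_k\bigr)=+\infty$: an infinite amount of $\mu$-mass is carried by the ``escaping'' trajectories.

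\emph{Step 2: exploit invariance and stability.} Two facts enter. First, any invariant measure of a VLMC is shift-invariant: from $U_{n+1}=X_{n+1}U_n$ one gets $\sigma(U_{n+1})=U_n$, whence $\sigma_*\mu=\mu$; and since $\rond T$ is stable, Lemma~\ref{lem:contextStableTree}(ii) gives $\sigma(B_k)\subseteq B_{k-1}$ for $k\geq2$. Second, by Lemma~\ref{lem:fondLem} the transitions of $U$ on $\bigsqcup_kB_k$ are triangular: from level $k$ one moves in one step either to level $k+1$ (the cont grows) or out of $\bigsqcup_kB_k$ into a renewal cylinder $\beta t\rond R$, $\beta t\in\rond S$, and renewals re-enter only at level $1$. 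Writing the balance equations for $\mu$ along this structure gives $\mu(B_{k+1})\leq\mu(B_k)$, with a persistent mass $m_\infty:=\lim_k\mu(B_k)=v_{\alpha_0s_0}\ell>0$ circulating forever upward through the levels without ever reaching a renewal cylinder. The plan is to show this is impossible. Heuristically $m_\infty$ is ``dissipative'' mass for the shift --- the sets $B_k\setminus\sigma(B_{k+1})$ form a wandering family --- so it should be supported on a genuinely dissipative part in the Hopf decomposition of the shift-invariant measure $\mu$; but invariance of $\mu$ \emph{for the VLMC}, whose dynamics only moves that mass \emph{up} the levels, forces that part to feed perpetually into renewal cylinders, contradicting $m_\infty>0$. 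Turning this into a proof is the real content: one would want either a (finite) $\mu$-conserved functional that the circulating mass violates, or a recurrence argument for $(C_n)_n$ on $\rond C^f$ using that a level-$k$ escaping context $c$ drags along the whole chain $c,\sigma(c),\dots,\sigma^{k-1}(c)=\alpha_0s_0$ of contexts sharing the alpha-LIS $\alpha_0s_0$.

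\emph{Main obstacle.} Everything hinges on Step 2, that is, on excluding the persistent escaping mass, and stability must be used in an essential way there: Remark~\ref{sigmaFinite} exhibits a \emph{non-stable} probabilised context tree --- with irrational contexts and transient $Q$ --- carrying a $\sigma$-finite invariant measure for which $\kappa_{\alpha s}(n)\not\to0$, so no argument can succeed without the hypothesis $\sigma(\rond T)\subseteq\rond T$. I expect the genuine difficulty to lie entirely in making the ``circulating mass is impossible'' heuristic rigorous; Step~1 and the shift-invariance observation are routine.
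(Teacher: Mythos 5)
The statement you were given is one of the paper's open conjectures (Section~\ref{sec:open}); the paper offers no proof of it, so the only question is whether your argument actually closes the gap, and it does not. Your Step~1 is a sound and useful reformulation: the identification $\kappa_{\alpha s}(k)=\Proba\left(T_1\geq k\mid C_0=\alpha s\right)$ is exactly the computation in the proof of Proposition~\ref{pro:compareSM}, and $\mu\left(B_k\right)=v_{\alpha s}\,\kappa_{\alpha s}(k)$ follows from the cascade formula, which the paper asserts remains valid for $\sigma$-finite invariant measures. Two caveats even here: the conjecture does not assume non-nullness, so the positivity of $v_{\alpha_0s_0}$ --- which your contradiction requires --- cannot be imported from Lemma~\ref{lem:pineq0}(i) without adding that hypothesis; and the finiteness of $v_{\alpha s}$ is covered by the paper's convention for $\sigma$-finite measures only because, by Lemma~\ref{lem:contextStableTree}(i), $\alpha s$ is itself a finite context in the stable case.

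The genuine gap is all of Step~2, and you say so yourself. ``Writing the balance equations\dots gives $\mu(B_{k+1})\leq\mu(B_k)$, with a persistent mass circulating forever upward'' followed by ``the plan is to show this is impossible'' is a restatement of the conjecture, not a proof: a $\sigma$-finite measure is perfectly entitled to carry infinite mass on $\bigcup_kB_k$ and to have a nontrivial dissipative part for the shift, so some concrete input (a conserved functional, a recurrence estimate for $\left(C_n\right)_n$ on $\rond C^f$, an actually executed Hopf-decomposition argument) is indispensable, and none is supplied. A further problem is factual: your ``Main obstacle'' paragraph cites Remark~\ref{sigmaFinite} as exhibiting a \emph{non-stable} tree with an invariant $\sigma$-finite measure for which $\kappa_{\alpha s}(n)\not\to0$. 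It does not: that remark concerns the left comb, which is stable, and its hypothesis is precisely $c_n\to0$, i.e.\ $\kappa_{\alpha s}(n)\to0$; the irrational-context example of the appendix is likewise built on a stable tree $\rond T_a$. The paper therefore contains no example, stable or otherwise, of an invariant $\sigma$-finite measure with non-vanishing cascades, and your claim that stability is demonstrably indispensable is unsupported by the reference you give. The conjecture remains open after your attempt.
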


\section{Appendix: an example of invariant $\sigma$-finite measure that charges irrational infinite contexts}
\label{sec:appendix}

A soon as a non-null VLMC admits an invariant probability measure, all infinite words are negligible -- see Lemma~\ref{lem:pineq0}(ii).
Besides, the same argument as in the proof of that lemma shows that an invariant $\sigma$-finite measure always vanishes on \emph{rational} right-infinite words, \emph{i.e.}~on eventually periodic words.
This appendix provides an example of non-null VLMC that admits an invariant $\sigma$-finite measure which gets positive values on infinitely many (necessarily irrational) contexts\footnote{In particular, one cannot get rid of the finiteness assumption of an invariant measure to prove Lemma~\ref{lem:pineq0}(ii).}.

\vskip 10pt
In this appendix, a ``$\sigma$-finite measure'' denotes a positive non-zero measure on $\rond R$ which is finite on all cylinders $c\rond R$ based on finite contexts $c$, and also, necessarily, on infinite contexts.
In particular, since the contexts induce a partition of $\rond R$, such a measure is truly $\sigma$-finite.
As usual in the field of Markov chains, when $U$ is a VLMC, the definition of an $U$-invariant probability measure can be extended to $\sigma$-finite measures using the transition probability kernel $P_U$, defined by 
\[
P_U(r,B)=\sum _{\alpha\in\rond A}q_{\pref (r)}(\alpha )\ind{\alpha r\in B}
\] 
on Borel sets $B$ and right-infinite words $r$:
this kernel acts on $\sigma$-finite measures $\pi$ through the formula 
\[\pi P_U(B):=\int _{\rond R}P_U(x,B)d\pi(x)
\] 
(this is an action on the right), and a $\sigma$-finite measure $\pi$ is said $U$-invariant whenever $\pi P_U=\pi$.

\vskip 5pt
In what follows, we describe the announced example in the form of a sequence of hints and assertions that can be easily (but sometimes laboriously) verified.

\vskip 5pt
Consider the irrational right-infinite word $a=1010^210^310^4\dots$.
Define as follows \emph{ the stabilised arithmetic tree}, denoted by $\rond T_a$:
it is the \emph{stable} context tree on the alphabet $\rond A=\set{0,1}$ spanned by $a$, \emph{i.e.} the smallest context tree that contains all the shifted words $\sigma ^n(a)$, $n\geq 0$ (see Definition~\ref{def:shift}).
On the left side of the following picture, one can find drawings of the successive context trees $t_n$ spanned by the shifted infinite word $\sigma ^n(a)$, $n\geq 1$.
They are used to give a representation of $\rond T_a$ on the right side of the picture.

\begin{center}
\begin{minipage}{0.4\textwidth}
\begin{center}
\newcommand{\pente}{1}
\newcommand{\pt}{5}
\newcommand{\peigne}[3]{
\foreach \j in {1,...,#3} \draw (#1-\j*\pente+\pente,#2-\j+1)--++(-\pente,-1);
\foreach \j in {1,...,#3} \draw (#1-\j*\pente+\pente,#2-\j+1)--++(\pente,-1);
\foreach \j in {1,...,#3} \fill (#1-\j*\pente,#2-\j) circle (\pt pt);
\foreach \j in {1,...,#3} \fill (#1-\j*\pente+2*\pente,#2-\j) circle (\pt pt);
}
\newcommand{\alphaLis}[2]{\draw (#1,#2) [color=red,line width=0.8pt] circle (1.7*\pt pt);}
\begin{tikzpicture}[scale=0.22]
\fill (0,0) circle (\pt pt);
\peigne{0}{0}2
\peigne{-0*\pente}{-2}3
\peigne{-1*\pente}{-5}4
\peigne{-3*\pente}{-9}5
\draw (-6*\pente,-14)--++(-\pente,-1);
\draw (-6*\pente,-14)--++(\pente,-1);\fill (-5*\pente,-15) circle (\pt pt);
\draw (-6*\pente+0.1,-15.2) node{\scriptsize $\cdots$};
\draw (-3,-16.5) node{\scriptsize Tree $c_1:=\sigma (a)$};
\alphaLis{-2*\pente}{-2}
\end{tikzpicture}
\hskip 10pt
\begin{tikzpicture}[scale=0.25]
\fill (0,0) circle (\pt pt);
\peigne{0}{0}3
\peigne{-1*\pente}{-3}4
\peigne{-3*\pente}{-7}5
\draw (-6*\pente,-12)--++(-\pente,-1);
\draw (-6*\pente,-12)--++(\pente,-1);\fill (-5*\pente,-13) circle (\pt pt);
\draw (-6*\pente+0.1,-13.2) node{\scriptsize $\cdots$};
\draw (-3,-14.5) node{\scriptsize Tree $c_2:=\sigma ^2(a)$};
\alphaLis{-3*\pente}{-3}
\end{tikzpicture}

\vskip 5pt
\begin{tikzpicture}[scale=0.3]
\fill (0,0) circle (\pt pt);
\draw (0,0)--++(-1,-1);\fill (-1,-1) circle (\pt pt);
\draw (0,0)--++(1,-1);\fill (1,-1) circle (\pt pt);
\draw [dashed] (-1,-1)--++(-2,-2);\fill (-3,-3) circle (\pt pt);
\draw (-1,-1)--++(1,-1);\fill (0,-2) circle (\pt pt);
\draw (-3,-3)--++(1,-1);\fill (-2,-4) circle (\pt pt);
\draw (-3,-3)--++(-1,-1);\fill (-4,-4) circle (\pt pt);
\draw (-4,-4)--++(-1,-1);\fill (-5,-5) circle (\pt pt);
\draw (-4,-4)--++(1,-1);\fill (-3,-5) circle (\pt pt);
\draw [decorate,decoration={brace,amplitude=8pt},yshift=0pt]
(1.4,-1.4) --++ (-3,-3) node [black,midway,xshift=0.8cm] {};
\draw (1,-4) node[rotate=45]{\scriptsize $n$ leaves};
\draw (-3,-5)--++(-1,-1);
\draw (-3,-5)--++(1,-1);\fill (-2,-6) circle (\pt pt);
\draw (-3,-6.5) node{\scriptsize $c_{n+1}$};
\alphaLis{-5*\pente}{-5}
\draw (-2,-8) node{\scriptsize Tree $c_n=\sigma ^n(a),~n\geq 1$};
\end{tikzpicture}

\end{center}
\end{minipage}
\begin{minipage}{0.55\textwidth}
\begin{center}
\newcommand{\penteGPeignes}{-0.4}
\newcommand{\penteDPeignes}{0.6}
\newcommand{\penteGGrandPeigne}{-1.2}
\newcommand{\penteDGrandPeigne}{8}
\newcommand{\tp}{3}
\newcommand{\dd}{0.15}
\newcommand{\alis}[2]{\draw (#1,#2) [color=red,line width=1pt] circle (1.7*\tp pt);}
\newcommand{\queue}[3]{
\draw (#1,#2)--++(0.5*\penteGPeignes,-0.5);
\draw (#1,#2)--++(0.5*\penteDPeignes,-0.5);
\draw (#1+0.25*\penteGPeignes+0.25*\penteDPeignes,#2-0.7) node{#3};
}
\newcommand{\peigneDeux}[4]{
\draw (#1,#2)--++(#3*\penteGPeignes,-#3);
\fill (#1,#2) circle (#4 pt);
\fill (#1+\penteDPeignes,#2-1) circle (#4 pt);
\foreach \j in {1,...,#3} \draw (#1+\j*\penteGPeignes-\penteGPeignes,#2-\j+1)--++(\penteDPeignes,-1);
\foreach \j in {1,...,#3} \fill (#1+\j*\penteGPeignes,#2-\j) circle (#4 pt);
\foreach \j in {2,...,#3} \queue{#1+\j*\penteGPeignes-\penteGPeignes+\penteDPeignes}{#2-\j}{\scriptsize $c_{\j}$};
\queue{#1+#3*\penteGPeignes}{#2-#3}{\scriptsize $\ \dots$}
}
\newcommand{\peigneTrois}[4]{
\draw (#1,#2)--++(#3*\penteGPeignes,-#3);
\fill (#1,#2) circle (#4 pt);
\fill (#1+\penteDPeignes,#2-1) circle (#4 pt);
\fill (#1+\penteGPeignes+\penteDPeignes,#2-2) circle (#4 pt);
\foreach \j in {1,...,#3} \draw (#1+\j*\penteGPeignes-\penteGPeignes,#2-\j+1)--++(\penteDPeignes,-1);
\foreach \j in {1,...,#3} \fill (#1+\j*\penteGPeignes,#2-\j) circle (#4 pt);
\foreach \j in {3,...,#3} \queue{#1+\j*\penteGPeignes-\penteGPeignes+\penteDPeignes}{#2-\j}{\scriptsize $c_{\j}$};
\queue{#1+#3*\penteGPeignes}{#2-#3}{\scriptsize $\ \dots$}
}
\newcommand{\peigneQuatre}[4]{
\draw (#1,#2)--++(#3*\penteGPeignes,-#3);
\fill (#1,#2) circle (#4 pt);
\fill (#1+\penteDPeignes,#2-1) circle (#4 pt);
\fill (#1+\penteGPeignes+\penteDPeignes,#2-2) circle (#4 pt);
\fill (#1+2*\penteGPeignes+\penteDPeignes,#2-3) circle (#4 pt);
\foreach \j in {1,...,#3} \draw (#1+\j*\penteGPeignes-\penteGPeignes,#2-\j+1)--++(\penteDPeignes,-1);
\foreach \j in {1,...,#3} \fill (#1+\j*\penteGPeignes,#2-\j) circle (#4 pt);
\foreach \j in {4,...,#3} \queue{#1+\j*\penteGPeignes-\penteGPeignes+\penteDPeignes}{#2-\j}{\scriptsize $c_{\j}$};
\queue{#1+#3*\penteGPeignes}{#2-#3}{\scriptsize $\ \dots$}
}
\newcommand{\peigneCinq}[4]{
\draw (#1,#2)--++(#3*\penteGPeignes,-#3);
\fill (#1,#2) circle (#4 pt);
\fill (#1+\penteDPeignes,#2-1) circle (#4 pt);
\fill (#1+\penteGPeignes+\penteDPeignes,#2-2) circle (#4 pt);
\fill (#1+2*\penteGPeignes+\penteDPeignes,#2-3) circle (#4 pt);
\fill (#1+3*\penteGPeignes+\penteDPeignes,#2-4) circle (#4 pt);
\foreach \j in {1,...,#3} \draw (#1+\j*\penteGPeignes-\penteGPeignes,#2-\j+1)--++(\penteDPeignes,-1);
\foreach \j in {1,...,#3} \fill (#1+\j*\penteGPeignes,#2-\j) circle (#4 pt);
\foreach \j in {5,...,#3} \queue{#1+\j*\penteGPeignes-\penteGPeignes+\penteDPeignes}{#2-\j}{\scriptsize $c_{\j}$};
\queue{#1+#3*\penteGPeignes}{#2-#3}{\scriptsize $\ \dots$}
}
\newcommand{\peigneSix}[4]{
\draw (#1,#2)--++(#3*\penteGPeignes,-#3);
\fill (#1,#2) circle (#4 pt);
\fill (#1+\penteDPeignes,#2-1) circle (#4 pt);
\fill (#1+\penteGPeignes+\penteDPeignes,#2-2) circle (#4 pt);
\fill (#1+2*\penteGPeignes+\penteDPeignes,#2-3) circle (#4 pt);
\fill (#1+3*\penteGPeignes+\penteDPeignes,#2-4) circle (#4 pt);
\fill (#1+4*\penteGPeignes+\penteDPeignes,#2-5) circle (#4 pt);
\foreach \j in {1,...,#3} \draw (#1+\j*\penteGPeignes-\penteGPeignes,#2-\j+1)--++(\penteDPeignes,-1);
\foreach \j in {1,...,#3} \fill (#1+\j*\penteGPeignes,#2-\j) circle (#4 pt);
\foreach \j in {6,...,#3} \queue{#1+\j*\penteGPeignes-\penteGPeignes+\penteDPeignes}{#2-\j}{\scriptsize $c_{\j}$};
\queue{#1+#3*\penteGPeignes}{#2-#3}{\scriptsize $\ \dots$}
}
\newcommand{\peigneSept}[4]{
\draw (#1,#2)--++(#3*\penteGPeignes,-#3);
\fill (#1,#2) circle (#4 pt);
\fill (#1+\penteDPeignes,#2-1) circle (#4 pt);
\fill (#1+\penteGPeignes+\penteDPeignes,#2-2) circle (#4 pt);
\fill (#1+2*\penteGPeignes+\penteDPeignes,#2-3) circle (#4 pt);
\fill (#1+3*\penteGPeignes+\penteDPeignes,#2-4) circle (#4 pt);
\fill (#1+4*\penteGPeignes+\penteDPeignes,#2-5) circle (#4 pt);
\fill (#1+5*\penteGPeignes+\penteDPeignes,#2-6) circle (#4 pt);
\foreach \j in {1,...,#3} \draw (#1+\j*\penteGPeignes-\penteGPeignes,#2-\j+1)--++(\penteDPeignes,-1);
\foreach \j in {1,...,#3} \fill (#1+\j*\penteGPeignes,#2-\j) circle (#4 pt);
\queue{#1+#3*\penteGPeignes}{#2-#3}{\scriptsize $\ \dots$}
}

\begin{tikzpicture}[scale=0.5]
\fill (0,0) circle (\tp pt);
\draw (0,0)--++(6*\penteGGrandPeigne,-6);
\draw (0,0)--++(\penteDGrandPeigne,-1);
\peigneDeux{\penteDGrandPeigne}{-1}{11}{\tp}
\fill (\penteGGrandPeigne,-1) circle (\tp pt);
\draw (\penteGGrandPeigne,-1)--(\penteGGrandPeigne+\penteDGrandPeigne-\dd*\penteDGrandPeigne,-2);
\peigneTrois{\penteGGrandPeigne+\penteDGrandPeigne-\dd*\penteDGrandPeigne}{-2}{10}{\tp}
\fill (2*\penteGGrandPeigne,-2) circle (\tp pt);
\draw (2*\penteGGrandPeigne,-2)--(2*\penteGGrandPeigne+\penteDGrandPeigne-2*\dd*\penteDGrandPeigne,-3);
\peigneQuatre{2*\penteGGrandPeigne+\penteDGrandPeigne-2*\dd*\penteDGrandPeigne}{-3}{9}{\tp}
\fill (3*\penteGGrandPeigne,-3) circle (\tp pt);
\draw (3*\penteGGrandPeigne,-3)--(3*\penteGGrandPeigne+\penteDGrandPeigne-3*\dd*\penteDGrandPeigne,-4);
\peigneCinq{3*\penteGGrandPeigne+\penteDGrandPeigne-3*\dd*\penteDGrandPeigne}{-4}{8}{\tp}
\fill (4*\penteGGrandPeigne,-4) circle (\tp pt);
\draw (4*\penteGGrandPeigne,-4)--(4*\penteGGrandPeigne+\penteDGrandPeigne-4*\dd*\penteDGrandPeigne,-5);
\peigneSix{4*\penteGGrandPeigne+\penteDGrandPeigne-4*\dd*\penteDGrandPeigne}{-5}{7}{\tp}
\fill (5*\penteGGrandPeigne,-5) circle (\tp pt);
\draw (5*\penteGGrandPeigne,-5)--(5*\penteGGrandPeigne+\penteDGrandPeigne-5*\dd*\penteDGrandPeigne,-6);
\peigneSept{5*\penteGGrandPeigne+\penteDGrandPeigne-5*\dd*\penteDGrandPeigne}{-6}{6}{\tp}
\fill (6*\penteGGrandPeigne,-6) circle (\tp pt);
\draw (6*\penteGGrandPeigne,-6)--++(0.5*\penteGGrandPeigne,-0.5);
\draw (6*\penteGGrandPeigne,-6)--++(0.5*\penteDGrandPeigne-6*\dd*0.5*\penteDGrandPeigne,-0.5);
\draw(6*\penteGGrandPeigne+0.5*0.5*\penteGGrandPeigne+0.56*\dd*0.5*\penteDGrandPeigne,-6-0.6) node{\scriptsize $\!\cdots$};
\alis{\penteDGrandPeigne+\penteDPeignes}{-2}
\alis{\penteGGrandPeigne+\penteDGrandPeigne-\dd*\penteDGrandPeigne+\penteGPeignes+\penteDPeignes}{-4}
\alis{2*\penteGGrandPeigne+\penteDGrandPeigne-2*\dd*\penteDGrandPeigne+2*\penteGPeignes+\penteDPeignes}{-6}
\alis{3*\penteGGrandPeigne+\penteDGrandPeigne-3*\dd*\penteDGrandPeigne+3*\penteGPeignes+\penteDPeignes}{-8}
\alis{4*\penteGGrandPeigne+\penteDGrandPeigne-4*\dd*\penteDGrandPeigne+4*\penteGPeignes+\penteDPeignes}{-10}
\alis{5*\penteGGrandPeigne+\penteDGrandPeigne-5*\dd*\penteDGrandPeigne+5*\penteGPeignes+\penteDPeignes}{-12}
\draw (0,-14) node{\scriptsize Stable context tree $\rond T_a$, generated by $a=1010^210^3\dots$};
\end{tikzpicture}

\end{center}
\end{minipage}
\end{center}

Computing the contexts leads to show that $\rond T_a$ admits two one-parameter families of context alpha-LIS, namely $0^q10^q1$, $q\geq 0$ and $10^N10^{N+2}$, $N\geq 1$.
They are coloured red in the pictures.
Thus the related matrix $Q$ is infinite -- see~\eqref{defQ} for a definition of $Q$.
Moreover, the infinite contexts of $\rond T_a$ are the following ones:
on one side, the $0^\infty$ and the $0^n10^\infty$, $n\geq 0$ which are rational;
on the other side, the $\sigma ^n(a)$ which are irrational.

Given a non-null probabilising of $\rond T_a$, assume that an invariant $\sigma$-finite measure $\mu$ exists.
Reasoning like in the proof of Theorem~\ref{fQbij}, for every $\alpha s\in\rond S$, decompose the number $\mu\left(\alpha s\rond R\right)$ through the partition of $s\rond R$ induced by cylinders based on finite contexts that have $s$ as a prefix and by such infinite contexts as well.
These writings show that $Q$'s entries are necessarily finite (sums of summable families) as well as the sums $\sum _{c\in\rond C^i,~c=s\dots}q_c(\alpha )\mu (c)$.
Note further that $Q$ is row-stochastic because $\rond T_a$ is a stable tree (see Proposition~\ref{pro:stochasticity}).
Since all infinite contexts are shifted from $a$ and since $\mu$ is invariant, all the numbers $\mu (c)$, $c\in\rond C^i$ can be written as $\mu (c)=\frac{\mu (a)}{p_c}$ where $p_c$ is a finite product of $q_{\sigma ^k(a)}(\beta)$, $k\geq 0$, $\beta\in\rond A$.
Consequently, all the sums
\[
\ell _{\alpha s}:=\sum _{\substack{{c\in\rond C^i}\\[1pt]{c=s\dots}}}\frac{q_c(\alpha )}{p_c}
\]
are also finite
-- note that the sums $\ell _{0^q10^q1}$, $q\geq 1$ are reduced to a single term, all other $\ell _{\alpha s}$'s being true infinite sums.

Finally, like in the proof of Theorem~\ref{fQbij}, the above decompositions lead to the following statement:
a $\sigma$-finite measure $\mu$ is invariant if and only if it satisfies the (infinite) matricial equation
\begin{equation}
\label{mu_S}
\mu (a)\ell +\mu _{\rond S}Q=\mu _{\rond S}
\end{equation}
where $\ell$ and $\mu _{\rond S}$ denote the infinite row-vectors $\ell =\left( \ell _ {\alpha s}\right) _{\alpha s\in\rond S}$ and $\mu _{\rond S}=\left( \mu \left(\alpha s\rond R\right)\right) _{\alpha s\in\rond S}$.
Notice that after a straightforward continuation of the function $f$ to $\sigma$-finite measures, $\mu _{\rond S}=f(\mu)$ (see~\eqref{def:f}).
Using the vocabulary of ~\cite{kitchens/98}, one finally gets the following result.
\begin{pro}
\label{prop:arithmetique}
Let $U$ be some non-null VLMC defined from the context tree $\rond T_a$ and let $Q$ be its $Q$-matrix (see~\eqref{defQ}).

(i) If $Q$ is positive recurrent, then $U$ admits a unique half-line of invariant $\sigma$-finite measures.
All of them are finite ones.

(ii) If $Q$ is null recurrent, then $U$ admits a unique half-line of invariant $\sigma$-finite measures.
None of them are finite ones, but they turn all infinite words negligeable.

(iii) If $Q$ is transient, then $U$ does not admit any invariant probability measure.
\end{pro}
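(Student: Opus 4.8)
The plan is to run everything through the correspondence established just above the statement: a $\sigma$-finite measure $\mu$ is $U$-invariant if and only if it satisfies the matricial equation~\eqref{mu_S}, i.e. $\mu_{\rond S}=\mu(a)\ell+\mu_{\rond S}Q$ with $\mu_{\rond S}=\left(\mu(\alpha s\rond R)\right)_{\alpha s\in\rond S}$ non-negative, where the row vector $\ell=(\ell_{\alpha s})_{\alpha s\in\rond S}$ has \emph{strictly positive} entries (each $\ell_{\alpha s}>0$, being either a single positive term or a true series of positive terms), and $\mu$ is entirely determined by the pair $(\mu(a),\mu_{\rond S})$: its values on infinite contexts are $\mu(c)=\mu(a)/p_c$ on the irrational ones and $0$ on the rational ones, and its values on cylinders based on finite words are then recovered as in the proof of Theorem~\ref{fQbij}(ii). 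Recall moreover that $Q$ is row-stochastic (Proposition~\ref{pro:stochasticity}) and irreducible (Proposition~\ref{pro:irreducibility}), so exactly one of the regimes positive recurrent, null recurrent, transient holds, and I will use the classical potential theory of irreducible stochastic matrices as in~\cite{kitchens/98,seneta/06}.

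I would first dispatch the two recurrent cases (i) and (ii) together. If $Q$ is recurrent and $\mu$ is $U$-invariant $\sigma$-finite, then $\mu_{\rond S}-\mu_{\rond S}Q=\mu(a)\ell\ge 0$, so $\mu_{\rond S}\ge\mu_{\rond S}Q$: $\mu_{\rond S}$ is a non-negative left-superinvariant vector of an irreducible recurrent stochastic matrix, hence left-\emph{invariant}; therefore $\mu(a)\ell=0$, and since $\ell$ has no zero entry, $\mu(a)=0$. Thus $\mu_{\rond S}$ is a non-negative left-fixed vector of $Q$, so it is proportional to the essentially unique such vector $v$, and $\mu$ vanishes on all infinite contexts. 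Conversely, applying the ($\sigma$-finite version of the) Kolmogorov reconstruction of the proof of Theorem~\ref{fQbij}(ii) to each positive multiple of $v$ produces a $U$-invariant $\sigma$-finite measure, and the injectivity of $\mu\mapsto(\mu(a),\mu_{\rond S})$ on invariant $\sigma$-finite measures (same argument as in that proof, the extra data on infinite contexts being pinned down by $\mu(a)$) shows these exhaust them: this is the announced half-line. For part (ii), $Q$ null recurrent means $v$ is not summable, and since in a stable tree every $\kappa_{\alpha s}\ge 1$ (the alpha-LIS is a context with cascade $1$; Lemma~\ref{lem:contextStableTree}), the total mass $\sum_{\alpha s}\lambda v_{\alpha s}\kappa_{\alpha s}\ge\lambda\sum_{\alpha s}v_{\alpha s}=+\infty$, so no member of the half-line is finite; that they still render every right-infinite word negligible follows, exactly as in the proof of Lemma~\ref{lem:pineq0}(ii), from the fact that they already vanish on every infinite context.

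For part (i), the same reasoning identifies the invariant $\sigma$-finite measures as the half-line $\{\lambda\mu_0:\lambda>0\}$ with $(\mu_0)_{\rond S}=v$ (now summable, by positive recurrence) and $\mu_0(a)=0$. What remains is to show $\mu_0$ is \emph{finite}, equivalently $\sum_{\alpha s\in\rond S}v_{\alpha s}\kappa_{\alpha s}<+\infty$; I stress this does not follow from positive recurrence alone — the left-comb of left-combs in the remark following Theorem~\ref{th:stable} satisfies $(c_1),(c_2)$ but not $(c_3)$ — so it is here that the arithmetic shape of $\rond T_a$ must enter. I would obtain it by the explicit description of $\rond T_a$: for each of the two families of context alpha-LIS ($0^q10^q1$ and $10^N10^{N+2}$) one enumerates the finite contexts carrying it, observes that each additional letter along the relevant branch contributes a factor $q_c(0)<1$, so the cascades decay geometrically in the branch length and their sums $\kappa_{\alpha s}$ are controlled uniformly enough to be summed against the summable $v$. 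Part (iii) is then short: were $U$ to admit an invariant probability measure $\pi$, then by Lemma~\ref{lem:pineq0} and Theorem~\ref{fQbij}(i) the vector $\pi_{\rond S}$ would be a non-negative left-fixed vector of $Q$ with strictly positive entries, and — the cylinders $\alpha s\rond R$, $\alpha s\in\rond S\subseteq\rond C$, being pairwise disjoint — summable with $\sum_{\alpha s}\pi(\alpha s\rond R)\le 1$; but an irreducible stochastic matrix admitting a summable non-negative left-fixed vector is positive recurrent, contradicting transience.

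The main obstacle is the finiteness estimate $\sum_{\alpha s}v_{\alpha s}\kappa_{\alpha s}<+\infty$ of part (i): the rest is organized bookkeeping of already-proved facts (the correspondence~\eqref{mu_S}, the positive-recurrent/null-recurrent/transient dichotomy, and Lemmas~\ref{lem:pineq0} and~\ref{lem:contextStableTree}), whereas that inequality is a quantitative assertion specific to the stabilised arithmetic tree and is the one point where positive recurrence of $Q$ must be combined with information on the lengths of the contexts.
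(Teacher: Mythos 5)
The paper gives essentially no written proof of this Proposition: it is presented as a direct consequence of the correspondence~\eqref{mu_S} combined with the classical trichotomy for irreducible stochastic countable matrices, which is exactly the route you take. Your handling of the recurrent cases (non-negativity of $\mu(a)\ell$ makes $\mu_{\rond S}$ superinvariant, recurrence upgrades this to invariance, hence $\mu(a)\ell=0$ and so $\mu(a)=0$ since $\ell$ has a positive entry, giving the unique half-line), of the infinite total mass in (ii) (via $\kappa_{\alpha s}\geq 1$ and non-summability of $v$), and of (iii) (a summable positive left-fixed vector of an irreducible stochastic matrix forces positive recurrence) is correct and matches the intended argument.

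The genuine gap is the finiteness assertion in (i); you locate it correctly but do not close it. The total mass of the reconstructed measure is $\sum_{\alpha s\in\rond S}v_{\alpha s}\kappa_{\alpha s}$, and your justification --- each additional letter contributes a factor $q_c(0)<1$, hence the cascades decay geometrically --- is a non sequitur: a product of factors each in $(0,1)$ need not decay geometrically, which is precisely the phenomenon behind the left comb and behind the left-comb-of-left-combs counterexample you yourself invoke. In $\rond T_a$ the contexts carrying the alpha-LIS $0^q10^q1$ include the whole family $0^j10^q1$, $j\geq q$, whose cascades are $\prod_{m=q}^{j-1}q_{0^m10^q1}(0)$; nothing in non-nullness prevents these from decaying like $1/j$ (take $q_{0^m10^q1}(0)=m/(m+1)$), which makes $\kappa_{0^q10^q1}=+\infty$ while the vanishing condition $\kappa_{\alpha s}(k)\to 0$ can still hold and the jump chain can still be arranged to be positive recurrent. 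So the shape of $\rond T_a$ does not deliver the estimate in the way you suggest, and this step of (i) remains unproved in your write-up. A secondary issue: in (ii), the claim that the measures ``turn all infinite words negligeable'' cannot be obtained ``exactly as in the proof of Lemma~\ref{lem:pineq0}(ii)'', because the irrational case of that proof bounds $\sum_n\pi(r_n)$ by the total mass, which is infinite here; vanishing on infinite contexts settles only those words some shift of which is an infinite context, and the remaining words require a separate argument (for instance grouping the shifts $r_n$ according to their finite \emph{cont} and using the finiteness of $\mu$ on each cylinder $c\rond R$).
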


We build an example of VLMC on $\rond T_a$ that admits an invariant $\sigma$-finite measure that charges (all) irrational infinite contexts.
Proposition~\ref{prop:arithmetique} shows that the corresponding $Q$-matrix is necessarily transient.

\vskip 5pt
To exhibit such an example, one first have to compute the ``form'' of $Q$ (check which entries vanish, see below) and to make explicit the way how $\ell$'s and $Q$'s entries are expressed in terms of the~$q_c$.
This being done, one sees that any row-stochastic matrix $A$ having the form of $Q$ (same positive entries, same zero ones) is the $Q$-matrix of a probabilised context tree $\rond T_a$ (choice of the $q_c$, $c\in\rond C^f$).
Furthermore, if $X$ is any positive row-vector that satisfies $XA<X$ (strict inequality for every coordinate), the vector $X-XA$ can be chosen as the $\ell$-vector of such a probabilised $\rond T_a$ (choice of the $q_c$, $c\in\rond C^i$).

\vskip 5pt
Therefore, thanks to Equation~\eqref{mu_S}, an example of invariant $\sigma$-finite measure that charges (all) irrational infinite contexts is given by any row-stochastic matrix $A$ having the required form, together with a positive row-vector $X$ that satisfies $XA<X$.
Note that such an $A$ is necessarily transient and that a corresponding $X$ has necessarily non summable coordinates (see~\cite{kitchens/98}).
Below, we give such a matrix $A$.

\vskip 5pt
Order the context alpha-LIS by increasing length, placing $10^{q-1}10^{q+1}$ before $0^q10^q1$ (both alpha-LIS have the same length).
For this order, the form of $Q$ is written hereunder, a $*$ denoting a positive entry.
As heuristic hint, remark first that the (transient) matrix $V$ given hereunder and the positive vector $X=(1,3,2,5,4,7,6,9,8\dots)$ satisfy $XV<X$.

\[\scriptsize
Q=\left(
\begin{array}{*{20}c}
*&*&*&*&*&*&*&*&*&*&*&\dots\\
*&*&*&*&*&*&*&*&*&*&*&\dots\\
*&*&0&0&0&0&0&0&0&0&0&\dots\\
*&*&*&*&*&*&*&*&*&*&*&\dots\\
*&*&0&*&0&0&0&0&0&0&0&\dots\\
*&*&*&*&*&*&*&*&*&*&*&\dots\\
*&*&0&*&0&*&0&0&0&0&0&\dots\\
*&*&*&*&*&*&*&*&*&*&*&\dots\\
*&*&0&*&0&*&0&*&0&0&0&\dots\\
*&*&*&*&*&*&*&*&*&*&*&\dots\\
*&*&0&*&0&*&0&*&0&*&0&\dots\\
\vdots&\vdots&\vdots&\vdots&\vdots&\vdots&\vdots&\vdots&\vdots&\vdots&\vdots&\ddots
\end{array}
\right)
\hskip 10pt
V=\left(
\begin{array}{*{20}c}
0&0&\color{red}1&0&0&0&0&0&0&0&0&\cdots\\
0&0&0&0&\color{red}1&0&0&0&0&0&0&\cdots\\
0&\color{red}1&0&0&0&0&0&0&0&0&0&\cdots\\
0&0&0&0&0&0&\color{red}1&0&0&0&0&\cdots\\
0&0&0&\color{red}1&0&0&0&0&0&0&0&\cdots\\
0&0&0&0&0&0&0&0&\color{red}1&0&0&\cdots\\
0&0&0&0&0&\color{red}1&0&0&0&0&0&\cdots\\
0&0&0&0&0&0&0&0&0&0&\color{red}1&\cdots\\
0&0&0&0&0&0&0&\color{red}1&0&0&0&\cdots\\
0&0&0&0&0&0&0&0&0&0&0&\cdots\\
0&0&0&0&0&0&0&0&0&\color{red}1&0&\cdots\\
\vdots&\vdots&\vdots&\vdots&\vdots&\vdots&\vdots&\vdots&\vdots&\vdots&\vdots&\ddots
\end{array}
\right)
\]
The (transient) matrix $A$ we give is a deformation of $V$ that has the form of $Q$.
It satisfies $XA<X$ for the row-vector $X$ given above.
Let $r,s\in ]0,1[$ and let $A$ be the matrix
\[\scriptsize
A=\left(
\begin{array}{*{20}c}
\frac{r^2}{0!}&\frac{r^3}{1!}&R_1(r)&\frac{r^5}{3!}&\frac{r^6}{4!}&\frac{r^7}{5!}&\frac{r^8}{6!}&\frac{r^9}{7!}&\frac{r^{10}}{8!}&\frac{r^{11}}{9!}&\frac{r^{12}}{10!}&\cdots\\[5pt]
\frac{r^3}{0!}&\frac{r^4}{1!}&\frac{r^5}{2!}&\frac{r^6}{3!}&R_2(r)&\frac{r^8}{5!}&\frac{r^9}{6!}&\frac{r^{10}}{7!}&\frac{r^{11}}{8!}&\frac{r^{12}}{9!}&\frac{r^{13}}{10!}&\cdots\\[5pt]
s&1-s&0&0&0&0&0&0&0&0&0&\cdots\\[5pt]
\frac{r^5}{0!}&\frac{r^6}{1!}&\frac{r^7}{2!}&\frac{r^8}{3!}&\frac{r^9}{4!}&\frac{r^{10}}{5!}&R_4(r)&\frac{r^{12}}{7!}&\frac{r^{13}}{8!}&\frac{r^{14}}{9!}&\frac{r^{15}}{10!}&\cdots\\[5pt]
\frac{s^2}{2!}&\frac{s^2}{2!}&0&1-\frac{s^2}{1!}&0&0&0&0&0&0&0&\cdots\\[5pt]
\frac{r^7}{0!}&\frac{r^8}{1!}&\frac{r^9}{2!}&\frac{r^{10}}{3!}&\frac{r^{11}}{4!}&\frac{r^{12}}{5!}&\frac{r^{13}}{6!}&\frac{r^{14}}{7!}&R_6(r)&\frac{r^{16}}{9!}&\frac{r^{17}}{10!}&\cdots\\[5pt]
\frac{s^3}{3!}&\frac{s^3}{3!}&0&\frac{s^3}{3!}&0&1-\frac{s^3}{2!}&0&0&0&0&0&\cdots\\[5pt]
\frac{r^9}{0!}&\frac{r^{10}}{1!}&\frac{r^{11}}{2!}&\frac{r^{12}}{3!}&\frac{r^{13}}{4!}&\frac{r^{14}}{5!}&\frac{r^{15}}{6!}&\frac{r^{16}}{7!}&\frac{r^{17}}{8!}&\frac{r^{18}}{9!}&R_8(r)&\cdots\\[5pt]
\frac{s^4}{4!}&\frac{s^4}{4!}&0&\frac{s^4}{4!}&0&\frac{s^4}{4!}&0&1-\frac{s^4}{3!}&0&0&0&\cdots\\[5pt]
\frac{r^{11}}{0!}&\frac{r^{12}}{1!}&\frac{r^{13}}{2!}&\frac{r^{14}}{3!}&\frac{r^{15}}{4!}&\frac{r^{16}}{5!}&\frac{r^{17}}{6!}&\frac{r^{18}}{7!}&\frac{r^{19}}{8!}&\frac{r^{20}}{9!}&\frac{r^{21}}{10!}&\cdots\\[5pt]
\frac{s^5}{5!}&\frac{s^5}{5!}&0&\frac{s^5}{5!}&0&\frac{s^5}{5!}&0&\frac{s^5}{5!}&0&1-\frac{s^5}{5!}&0&\cdots\\[5pt]
\vdots&\vdots&\vdots&\vdots&\vdots&\vdots&\vdots&\vdots&\vdots&\vdots&\vdots&\ddots
\end{array}
\right)
\]
where $R_1(r)=1+\frac{r^{4}}{2}-r^{2}e^r$ and $R_n(r)=1+\frac{r^{2n+3}}{(n+2)!}-r^{n+1}e^r$ when $n\geq 2$ so that $A$ is row-stochastic.
It turns out that $A$ satisfies $XA<X$ as soon as $r$ and $s$ are small enough.
More precisely, $0<r<1/3$ and $0<s<1/10$ is a sufficient condition.
All this can be checked by patient but simple calculations.

\bibliographystyle{plainnat} 
\bibliography{paccaut}

\begin{thebibliography}{27}
\providecommand{\natexlab}[1]{#1}
\providecommand{\url}[1]{\texttt{#1}}
\expandafter\ifx\csname urlstyle\endcsname\relax
  \providecommand{\doi}[1]{doi: #1}\else
  \providecommand{\doi}{doi: \begingroup \urlstyle{rm}\Url}\fi

\bibitem[Barbu and Limnios(2008)]{barbu/limnios/08}
V.S. Barbu and N.~Limnios.
\newblock \emph{\href{https://doi.org/10.1007/978-0-387-73173-5}{Semi-{M}arkov
  chains and hidden semi-{M}arkov models toward applications}}, volume 191 of
  \emph{Lecture Notes in Statistics}.
\newblock Springer, New York, 2008.
\newblock ISBN 978-0-387-73171-1.

\bibitem[Bejerano and Yona(2001)]{bejerano/yona/01}
G.~Bejerano and G.~Yona.
\newblock \href{https://doi.org/10.1093/bioinformatics/17.1.23}{Variations on
  probabilistic suffix trees: statistical modeling and prediction of protein
  families}.
\newblock \emph{Bioinformatics}, 17, 1:\penalty0 23--43, 2001.

\bibitem[Busch et~al.(2009)Busch, Ferrari, Flesia, Fraiman, Grynberg, and
  Leonardi]{busch/etc/09}
J.R. Busch, P.A. Ferrari, A.G. Flesia, R.~Fraiman, S.P Grynberg, and
  F.~Leonardi.
\newblock \href{https://doi.org/10.1214/08-aoas218}{Testing statistical
  hypothesis on random trees and applications to the protein classification
  problem}.
\newblock \emph{The Annals Appl. Stat.}, 3, 2:\penalty0 542--563, 2009.

\bibitem[C\'enac et~al.(2012)C\'enac, Chauvin, Paccaut, and
  Pouyanne]{cenac/chauvin/paccaut/pouyanne/12}
P.~C\'enac, B.~Chauvin, F.~Paccaut, and N.~Pouyanne.
\newblock \href{https://doi.org/10.1007/978-3-642-27461-9_1}{Variable length
  {M}arkov chains and dynamical sources}.
\newblock \emph{S\'eminaire de Probabilit\'es XLIV, Lecture Notes in Math.},
  2046:\penalty0 1--39, 2012.

\bibitem[C\'enac et~al.(2013)C\'enac, Chauvin, Herrmann, and
  Vallois]{cenac/chauvin/herrmann/vallois/13}
P.~C\'enac, B.~Chauvin, S.~Herrmann, and P.~Vallois.
\newblock \href{http://math-mprf.org/journal/articles/id1288/}{Persistent
  random walks, variable length Markov chains and piecewise deterministic
  Markov processes}.
\newblock \emph{Markov Processes and Related Fields}, 19\penalty0 (1):\penalty0
  1--50, 2013.

\bibitem[C{\'e}nac et~al.(2017)C{\'e}nac, De~Loynes, Offret, and
  Rousselle]{cenac:hal-01658494}
P.~C{\'e}nac, B.~De~Loynes, Y.~Offret, and A.~Rousselle.
\newblock {Recurrence of Multidimensional Persistent Random Walks. Fourier and
  Series Criteria}.
\newblock working paper or preprint, December 2017.
\newblock URL \url{https://hal.archives-ouvertes.fr/hal-01658494}.

\bibitem[C\'enac et~al.(2018)C\'enac, Le~Ny, {D}e {L}oynes, and Offret]{CDLO}
P.~C\'enac, A.~Le~Ny, B.~{D}e {L}oynes, and Y.~Offret.
\newblock \href{https://doi.org/10.1007/s10959-016-0714-4}{Persistent random
  walks. {I}. {R}ecurrence versus transience}.
\newblock \emph{J. Theoret. Probab.}, 31\penalty0 (1):\penalty0 232--243, 2018.

\bibitem[C\'enac et~al.(2019)C\'enac, Chauvin, Paccaut, and Pouyanne]{CCPP4}
P.~C\'enac, B.~Chauvin, F.~Paccaut, and N.~Pouyanne.
\newblock Variable length markov chains, persistent random walks: a close
  encounter.
\newblock working paper or preprint, Jul 2019.
\newblock URL \url{https://arxiv.org/abs/1909.04475v1}.

\bibitem[Csisz\'ar and Talata(2006)]{csiszar/talata/06}
I.~Csisz\'ar and Z.~Talata.
\newblock \href{https://doi.org/10.1109/isit.2005.1523438}{Context Tree
  Estimation for Not Necessarily Finite Memory Processes, Via BIC and MDL}.
\newblock \emph{IEEE Transactions on Information Theory}, 52\penalty0
  (3):\penalty0 1007--1016, 2006.

\bibitem[De~Santis and Piccioni(2012)]{desantis/piccioni/12}
E.~De~Santis and M.~Piccioni.
\newblock \href{https://doi.org/10.1239/jap/1339878789}{Backward coalescence
  times for perfect simulation of chains with infinite memory}.
\newblock \emph{J. Appl. Probab.}, 49\penalty0 (2):\penalty0 319--337, 2012.

\bibitem[Doeblin and Fortet(1937)]{doeblin/fortet/37}
W.~Doeblin and R.~Fortet.
\newblock \href{http://www.numdam.org/item?id=BSMF_1937__65__132_0}{Sur des
  cha\^{\i}nes \`a liaisons compl\`etes}.
\newblock \emph{Bull. Soc. Math. France}, 65:\penalty0 132--148, 1937.

\bibitem[Fedotov et~al.(2015)Fedotov, Tan, and Zubarev]{FedTanZub}
S.~Fedotov, A.~Tan, and A.~Zubarev.
\newblock \href{https://doi.org/10.1103/physreve.91.042124}{Persistent random
  walk of cells involving anomalous effects and random death}.
\newblock \emph{Phys. Rev. E}, 91:\penalty0 042124, 2015.

\bibitem[Fern{\'a}ndez and Maillard(2005)]{fernandez/maillard/05}
R.~Fern{\'a}ndez and G.~Maillard.
\newblock \href{https://doi.org/10.1007/s10955-004-8821-5}{Chains with complete
  connections: general theory, uniqueness, loss of memory and mixing
  properties}.
\newblock \emph{J. Stat. Phys.}, 118\penalty0 (3-4):\penalty0 555--588, 2005.

\bibitem[{Ferreira} et~al.(2019){Ferreira}, {Gallo}, and
  {Paccaut}]{ferreira/gallo/paccaut/19}
R.~F. {Ferreira}, S.~{Gallo}, and F.~{Paccaut}.
\newblock \href{https://arxiv.org/abs/1907.02442}{Non-regular g-measures and
  variable length memory chains}.
\newblock \emph{arXiv e-prints}, art. arXiv:1907.02442, Jul 2019.

\bibitem[Furstenberg(1967)]{Furstenberg}
H.~Furstenberg.
\newblock \href{https://doi.org/10.1007/bf01692494}{Disjointness in Ergodic
  Theory, Minimal Sets, and a Problem in Diophantine Approximation}.
\newblock \emph{Mathematical Systems Theory}, 1\penalty0 (1):\penalty0 1--49,
  1967.

\bibitem[Gallo(2011)]{gallo/11}
S.~Gallo.
\newblock \href{https://doi.org/10.1239/aap/1316792668}{Chains with unbounded
  variable length memory: perfect simulation and a visible regeneration
  scheme}.
\newblock \emph{Adv. in Appl. Probab.}, 43\penalty0 (3):\penalty0 735--759,
  2011.

\bibitem[Gallo and Garcia(2013)]{gallo/garcia/13}
S.~Gallo and N.~L. Garcia.
\newblock \href{https://doi.org/10.1016/j.spa.2013.05.010}{Perfect simulation
  for locally continuous chains of infinite order}.
\newblock \emph{Stochastic Process. Appl.}, 123\penalty0 (11):\penalty0
  3877--3902, 2013.

\bibitem[Gallo and Paccaut(2013)]{gallo/paccaut/13}
S.~Gallo and F.~Paccaut.
\newblock \href{https://doi.org/10.1088/0951-7715/26/3/763}{On non-regular
  $g$-measures}.
\newblock \emph{Nonlinearity}, 26:\penalty0 763--776, 2013.

\bibitem[Galves and Leonardi(2008)]{galves/leonardi/08}
A.~Galves and F.~Leonardi.
\newblock \href{https://doi.org/10.1007/978-3-7643-8786-0_12}{Exponential
  Inequalities for Empirical Unbounded Context Trees}.
\newblock \emph{In and Out of Equilibrium 2. Progress in Probability},
  60:\penalty0 257--269, 2008.

\bibitem[Garivier and Leonardi(2011)]{garivier/leonardi/11}
A.~Garivier and F.~Leonardi.
\newblock \href{https://doi.org/10.1016/j.spa.2011.06.012}{Context tree
  selection: a unifying view}.
\newblock \emph{Stochastic Proc. Appl.}, 121:\penalty0 2488--2506, 2011.

\bibitem[Harris(1955)]{harris/55}
T.~E. Harris.
\newblock \href{https://doi.org/10.2140/pjm.1955.5.707}{On chains of infinite
  order}.
\newblock \emph{Pacific J. Math.}, 5:\penalty0 707--724, 1955.

\bibitem[Johansson and \"Oberg(2003)]{johansson/oberg/03}
A.~Johansson and A.~\"Oberg.
\newblock \href{https://doi.org/10.4310/mrl.2003.v10.n5.a3}{Square summability
  of variations of $g$-functions and uniqueness of $g$-measures}.
\newblock \emph{Math. Res. Lett.}, 10\penalty0 (5-6):\penalty0 587--601, 2003.

\bibitem[Kitchens(1998)]{kitchens/98}
B.~Kitchens.
\newblock \emph{\href{https://doi.org/10.1007/978-3-642-58822-8}{Symbolic
  Dynamics}}.
\newblock Universitext. Springer-Verlag, Berlin, 1998.
\newblock One-sided, two-sided and countable state Markov shifts.

\bibitem[Lyons(1990)]{Lyons90}
R.~Lyons.
\newblock \href{https://projecteuclid.org/euclid.aop/1176990730}{Random walks
  and percolation on trees}.
\newblock \emph{The Annals of Probability}, 18\penalty0 (3):\penalty0 931--958,
  1990.

\bibitem[Lyons and Peres(2017)]{LPbook}
R.~Lyons and Y.~Peres.
\newblock \emph{\href{https://doi.org/10.1017/9781316672815}{Probability on
  Trees and Networks}}.
\newblock Cambridge University Press, 2017.

\bibitem[Rissanen(1983)]{rissanen/83}
J.~Rissanen.
\newblock \href{https://doi.org/10.1109/tit.1983.1056741}{A universal data
  compression system}.
\newblock \emph{IEEE Trans. Inform. Theory}, 29\penalty0 (5):\penalty0
  656--664, 1983.

\bibitem[Seneta(2006)]{seneta/06}
E.~Seneta.
\newblock \emph{\href{https://doi.org/10.1007/0-387-32792-4}{Non-negative
  matrices and Markov chains}}.
\newblock Springer series in statistics. Springer, New York, NY, 2006.
\newblock Revised reprint of the second (1981) edition.

\end{thebibliography}

\end{document}